\newenvironment{proofsect}[1]{\vskip0.1cm\noindent{\rmfamily\itshape #1.}}{\qed\vspace{0.15cm}}
\spnewtheorem*{Main Theorem}{Main Theorem}{\normalfont\bfseries}{\itshape}
\spnewtheorem{mylemma}[theorem]{Lemma}{\bfseries}{\itshape} 
\spnewtheorem{myproposition}[theorem]{Proposition}{\bfseries}{\itshape} 
\spnewtheorem{mycorollary}[theorem]{Corollary}{\bfseries}{\itshape} 
\spnewtheorem{mydefinition}[theorem]{Definition}{\bfseries}{\itshape} 
\spnewtheorem{myquestion}{Question}{\bfseries}{\itshape} 
\spnewtheorem{myconjecture}[myquestion]{Conjecture}{\bfseries}{\itshape} 
\numberwithin{equation}{section} \numberwithin{theorem}{section}
\g@addto@macro{\definition}{\itshape}
\begin{document}

\title{Large Degree Asymptotics and the Reconstruction Threshold of the Asymmetric Binary Channels}

\titlerunning{Large Degree Asymptotics and the Reconstruction Threshold}        

\author{ Wenjian Liu \and Ning Ning
}


\institute{        
    Wenjian Liu \at
    Dept.of Mathematics and Computer Science,
    Queensborough Community College, City University of New York\\
    Research supported by CUNY Community College Research Grant \#1541\\
    \email{wjliu@qcc.cuny.edu}           
	\and
	Ning Ning (Corresponding Author)\at
	Dept. of Applied Mathematics, University
	of Washington, Seattle\\
	\email{ningnin@uw.edu} 	 
}

\maketitle

\begin{abstract}
In this paper, we consider a broadcasting process in which information is propagated from a given root node on a noisy tree network, and answer the question that whether the symbols at the $n$th level of the tree contain non-vanishing information of the root as $n$ goes to infinity. Although the reconstruction problem on the tree has been studied in numerous contexts including information theory, mathematical genetics and statistical physics, the existing literatures with rigorous reconstruction thresholds established are very limited. 
In the remarkable work of Borgs, Chayes, Mossel and Roch (\textit{The Kesten-Stigum reconstruction bound is tight for roughly symmetric binary channels}. FOCS, IEEE Comput. Soc. (2006): 518–530. Berkeley, CA.), the exact threshold for the reconstruction
problem for a binary asymmetric channel on the
$d$-ary tree is establish, provided that the asymmetry is sufficiently
small, which is the first exact reconstruction threshold
obtained in roughly a decade. In this paper, by means of
refined analyses of moment recursion on a weighted version of the
magnetization, and concentration investigations, we rigorously give a complete answer to the question of how small it needs to be 
to establish the tightness of the reconstruction threshold and further determine its asymptotics of large degrees.

\keywords{Kesten-Stigum reconstruction bound \and Markov random fields on trees \and 
 Distributional recursion \and Nonlinear dynamical system}
\subclass{ 60K35 \and 82B26 \and 82B20}
\end{abstract}

\section{Introduction}
\label{sec1a}\vspace{-4mm}
\subsection{Broadcasting Process and the Reconstruction Problem}
We consider the following broadcasting process that can be considered as a communication tree network, as a model
for propagation of a genetic property or as a tree-indexed Markov chain. In this paper, we restrict our attention to the regular $d$-ary
trees, which is an infinite rooted tree where every vertex has
exactly $d$ offspring, denoted by $\mathbb{T}=(\mathbb{V}, \mathbb{E}, \rho)$ with nodes
$\mathbb{V}$ edges $\mathbb{E}$ and root $\rho\in \mathbb{V}$. A configuration on $\mathbb{T}$ is an element of $\mathcal{C}^\mathbb{T}$ with $\mathcal{C}$ being a finite characters set, that is, an assignment of a state in $\mathcal{C}$ to each vertex.  The state of the root $\rho$, denoted by $\sigma_\rho$, is
chosen according to some initial distribution $\pi$ on $\mathcal{C}$.  
This symbol is then propagated in the tree according to the probability transition matrix
$\mathbf{M}=(M_{ij})_{i, j \in \mathcal{C}}$, which functions as the noisy communication channel on each
edge. That is, for each
vertex $v$ having $u$ as its parent, the spin at $v$ is defined
according to the probabilities
$$
\mathbf{P}(\sigma_v=j\mid\sigma_u=i)=M_{i j}, \quad i, j \in \mathcal{C}.
$$
The objective model taken into
account is the asymmetric binary channel with the configuration set
$\mathcal{C}=\{1, 2\}$, whose transition matrix is of the form
$$
\mathbf{M}= \frac{1}{2} \left(
\begin{array}{cc}
1+\theta & 1-\theta \\
1-\theta & 1+\theta \\
\end{array}
\right) + \frac{\Delta}{2}\left(
\begin{array}{cc}
-1 & 1 \\
-1 & 1 \\
\end{array}
\right),
$$
where $|\theta|+|\Delta|\leq 1$ and $\Delta$ is used to describe the deviation of $\mathbf{M}$
from the symmetric channel. It is easy to see that $\mathbf{M}$ has two eigenvalues, 1 and $\theta$. Then we pick a state at the root according to the stationary distribution $\pi=(\pi_1, \pi_2)$ of
	$\mathbf{M}$, which is given by
	\begin{equation*}
	\pi_1=\frac{1}{2}-\frac{\Delta}{2(1-\theta)}\quad \text{and}\quad
	\pi_2=\frac{1}{2}+\frac{\Delta}{2(1-\theta)},
	\end{equation*}
	and without loss of generality, it is convenient to assume that
	$\pi_1\geq\pi_2$.

Recall that the classical Ising model, a mathematical model of ferromagnetism in statistical mechanics, consists of discrete variables that represent magnetic dipole moments of atomic spins that can be in one of two states ($-1$ or $+1$). 
Consider a set of lattice sites $\Lambda$, each with a set of adjacent sites (e.g. a graph) forming a lattice, and for each $k\in \Lambda$, there is a discrete variable $\sigma_k \in \{-1,+1\}$ representing the site's spin. The energy of a configuration $\sigma$ is given by the Hamiltonian function
$$
H(\sigma)=-\displaystyle\sum_{\langle i, j\rangle}J_{ij}\sigma_i \sigma_j-\mu\displaystyle\sum_{j}h_j\sigma_j,
$$
where the notation $\langle i, j\rangle$ indicates that sites $i$ and $j$ are the nearest neighbors, $J_{ij}$ denotes the interaction between two adjacent sites $i,j \in \Lambda$ and $h_j$ models the external magnetic field interaction of site $j \in \Lambda$. 
In this literature, the current model corresponds to the general Ising model with external field on the tree. 

The problem of reconstruction is to analyze whether there exists non-vanishing information on the letter transmitted from the
root, given all the symbols received at the vertices of the $n$th generation, as $n$ goes to infinity. 
We define the distance between probability measures in line with \cite{evans2000broadcasting}. Let $v_+$ and $v_-$ be two probability measures on the same space. Set $f=dv_+/dv$ and $g=dv_-/dv$ where $v:=(v_+ + v_-)/2$. Inferring the root spin $\sigma_{\rho}$ from the spin configurations on the finite vertex set is a basic problem of Bayesian hypothesis testing. The total variation distance, defined as $d_{TV}(v_+ , v_-):=\frac{1}{2}\int |f-g|dv$, can be interpreted as the difference between the probabilities of correct and erroneous inferences. 
Denote $\sigma(n)$ as the spins at distance $n$ from the root and $\sigma^i(n)$ as $\sigma(n)$ conditioned on $\sigma_\rho = i$. Then the reconstruction problem can be mathematical formulated as the following:
\begin{definition}
	The reconstruction problem for the infinite tree $\mathbb{T}$ is
	\textup{\textbf{solvable}} if for some $i, j\in\mathcal{C}$,
	$$
	\limsup_{n\to\infty}d_{TV}(\sigma^i(n), \sigma^j(n))>0.
	$$
	When the $\limsup$
	is $0$, we will say that the model has
	\textup{\textbf{non-reconstruction}} on $\mathbb{T}$.
\end{definition}

\subsection{Background and Applications}

The reconstruction problem arises naturally in statistical physics, where the reconstruction threshold is identified as the threshold for extremality of the infinite-volume Gibbs measure with
free boundary conditions (see \cite{georgii2011gibbs}). 
In \cite{berger2005glauber, martinelli2007fast, tetali2012phase}, the
reconstruction bound is found to have a crucial determination effect on the efficiency of the Glauber
dynamics on trees and random graphs. The reconstruction threshold is also believed to play an
important role in a variety of other contexts, including
phylogenetic reconstruction in evolutionary
biology (\cite{mossel2004phase, daskalakis2006optimal, roch2006short}), communication theory in the study of noisy
computation (\cite{evans2000broadcasting}),  clustering problem in the setting of the stochastic block model (\cite{mossel2012stochastic, mossel2013proof, neeman2014non}), and network tomography (\cite{bhamidi2010network}). For detailed explanation on the reconstruction problem in mixing, phylogeny and replicas, we refer to Section 1.3 in \cite{bernussou1977point}. For other applications of reconstruction, we refer to Section 1.4 in \cite{sly2009reconstruction} and Section 1.3 in \cite{liu2018tightness}, as well as the references therein. 

In this paper, we focus on analyzing the tightness of the reconstruction bound on the tree for asymmetric binary channels, which corresponds to the asymmetric Ising model on the tree in statistical physics term. 
Well known, the
reconstruction problem is closely related to $\lambda$, the second
largest eigenvalue in absolute value of the transition probability matrix, which is $\theta$ in the current model under investigation. \cite{kesten1966additional,kesten1967limit} showed that
the reconstruction problem is solvable if $d\lambda^2>1$, which is known as the
Kesten-Stigum bound. However in the case of larger noise, i.e.
$d\lambda^2 < 1$, one may wonder whether reconstruction problem is still solvable, that is collecting and analyzing the whole set of symbols received at the $n$th
generation to retrieve information transmitted from the root.

First consider the symmetric channel. It was shown in~\cite{bleher1995purity} that the
reconstruction problem is solvable if and only if $d\lambda^2>1$ in the binary
model. For all
other models, it was also known and easy to prove that
$d\lambda^2>1$ implies solvability, while proving
non-reconstructibility turned out to be harder. Although coupling
arguments easily yield non-reconstruction, these arguments are
typically not rigorous. A natural approach to establish non-reconstructibility is
to analyze recursions in terms of random variables, each of whose
values is the expectation of the chain at a vertex, given the state
at the leaves of the subtree below it, and the corresponding
probabilities. Although the reconstruction problem on the tree has been studied in numerous contexts, the existing literatures with rigorous reconstruction thresholds established are very limited. 
\cite{sly2009reconstruction} proved the first exact reconstruction threshold in a nonbinary model by establishing the Kesten–Stigum bound for the $3$-state Potts model on regular trees of large degree, and further established that the Kesten–Stigum bound is not tight for the $q$-state Potts model when $q \geq 5$, which confirms much of the picture conjectured earlier by \cite{mezard2006reconstruction}. \cite{liu2018tightness} considered a $2q$-state symmetric model, with two categories of $q$ states in each category, and 3 transition probabilities (the probability to remain in the same state, the probability to change states but remain in the same category, and the probability to change categories) and showed that the Kesten-Stigum reconstruction bound is not tight when $q \geq 4$. 


%

Next let us turn to the existing results regarding the asymmetric channel. \cite{mossel2001reconstruction, mossel2004survey} showed that the Kesten-Stigum
bound is not the bound for reconstruction in the binary asymmetric
model with sufficiently large asymmetry or in the symmetric Potts model with
sufficiently many characters, which shed the light on exploring the
tightness of the Kesten-Stigum bound. Furthermore, Proposition 12 in \cite{mossel2001reconstruction} implies that for any
asymmetric channel, given $d$ and $\pi$, the reconstructibility is monotone
in $|\theta|$, say, there exist the thresholds $\theta^-<0<\theta^+$ 
such that, there is non-reconstruction when $\theta \in(\theta^-,
\theta^+)$, while it is reconstructible when $\theta<\theta^-$ or
$\theta>\theta^+$. Therefore, the Kesten-Stigum bound mentioned above
implies immediately
$$
\theta^+\leq d^{-1/2}\quad \textup{and}\quad \theta^-\geq -d^{-1/2},
$$
but exact thresholds for non-solvability had not been known.
The breakthrough result in \cite{borgs2006kesten} established
the exact threshold for the reconstruction 
problem with the binary asymmetric channel on the
$d$-ary tree, provided that the asymmetry is sufficiently
small, which is the first exact reconstruction threshold
obtained in roughly a decade.
However, this beautiful result only rigorously proved the existence of $\Delta$ to satisfy the reconstruction criterion, does not answer the question that how small the asymmetry needs to be, therefore rigorously estimating the
range of asymmetry to keep Kesten-Stigum bound tight is a natural question, which will be answered
in the next section.

\subsection{Main Results and Proof Sketch}
In this section, we will present a critical condition of the stationary initial distribution $\pi$ to keep the tightness of the Kesten-Stigum bound, by means of refined
recursive equations of vector-valued distributions and concentration
analyses.
Furthermore, when the Kesten-Stigum bound is not tight, we provide a new reconstruction threshold $C_\pi \in (0,1)$ for sufficiently large $d$.
Since $d\theta^2>1$ always implies reconstruction, it
suffices to consider $d\theta^2\leq1$ in the following context.

\begin{theorem}
	\label{reconstruction}
	For every $d$ and $\pi$ such that $\pi_1\pi_2<\frac16$,	the Kesten-Stigum bound is not tight. In other words, the
	reconstruction problem is solvable for some $\theta$, even if
	$d\theta^2<1$.
\end{theorem}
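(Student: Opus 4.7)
The plan is, for every $d$ and every $\pi$ with $\pi_1\pi_2<1/6$, to exhibit some $\theta$ with $d\theta^2<1$ for which the posterior distribution of $\sigma_\rho$ given $\sigma(n)$ remains non-degenerate in the large-$n$ limit, giving reconstruction strictly below the Kesten-Stigum value.

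First I would introduce the posterior magnetization
$$
Y_n \;:=\; \mathbf{P}(\sigma_\rho=1\mid\sigma(n))-\pi_1
$$
and its conditional moments $a_n:=\mathbf{E}[Y_n\mid\sigma_\rho=1]$ and $b_n:=\mathbf{E}[Y_n^2\mid\sigma_\rho=1]$. Denoting by $Y_n^{(1)},\ldots,Y_n^{(d)}$ the corresponding magnetizations at the $d$ children of $\rho$, which are conditionally independent given $\sigma_\rho$, Bayes' rule yields the exact one-level recursion
$$
Y_{n+1} \;=\; \pi_1\pi_2\cdot\frac{\prod_{i=1}^{d}\bigl(1+\theta Y_n^{(i)}/\pi_1\bigr)-\prod_{i=1}^{d}\bigl(1-\theta Y_n^{(i)}/\pi_2\bigr)}{\pi_1\prod_{i=1}^{d}\bigl(1+\theta Y_n^{(i)}/\pi_1\bigr)+\pi_2\prod_{i=1}^{d}\bigl(1-\theta Y_n^{(i)}/\pi_2\bigr)}.
$$
A Taylor expansion in the $Y_n^{(i)}$ produces a linear contribution $\theta\sum_i Y_n^{(i)}$, a quadratic contribution whose coefficient is a positive multiple of $(\pi_2-\pi_1)/(\pi_1\pi_2)$ acting on $\sum_{i<j}Y_n^{(i)}Y_n^{(j)}$, and a cubic remainder.

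Second, I would pass this expansion through the conditional expectation given $\sigma_\rho=1$. Using $\mathbf{E}[Y_n^{(i)}\mid\sigma_\rho=1]=\theta a_n$, which follows from the eigenvalue decomposition of $\mathbf{M}$, one obtains a coupled recursion for $(a_n,b_n)$ whose linear part has spectral radius $d\theta^2$. At the Kesten-Stigum critical value $d\theta^2=1$ the linearised flow is neutral, and the leading nonlinear correction decides whether $(a_n,b_n)\to(0,0)$ (non-reconstruction) or stays bounded away from the origin (reconstruction). A direct calculation, substituting the quadratic coefficient from the first step and simplifying with $\pi_1+\pi_2=1$, reduces the effective drift of $b_n$ along the recursion to a positive multiple of $(1-6\pi_1\pi_2)\,a_n^2$; under the hypothesis $\pi_1\pi_2<1/6$ this drift is strictly positive.

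Third, by continuity of the moment recursion in $\theta$, the strictly positive drift persists for some $\theta$ with $d\theta^2$ slightly below $1$. A standard induction then shows that $b_n$ stays uniformly bounded below, which yields $\limsup_n d_{TV}(\sigma^1(n),\sigma^2(n))>0$ and establishes reconstructibility.

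The main obstacle will be making the Taylor expansion rigorous: although $Y_n$ takes values in a bounded interval, the expansion is only useful where $|Y_n|$ is small, so controlling the contribution of atypical large values of $Y_n$ requires a concentration estimate placing $Y_n$ at the scale $\sqrt{b_n}$. This is the quantitatively delicate step where the refined moment recursion and concentration investigations mentioned in the abstract enter the argument, in the same spirit as the truncation techniques developed in \cite{mossel2001reconstruction} and the sharper concentration estimates of \cite{sly2009reconstruction}.
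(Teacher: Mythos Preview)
Your strategy is the paper's: set up the distributional recursion for the posterior magnetization, Taylor-expand to isolate the $(1-6\pi_1\pi_2)$ coefficient in the quadratic correction, control the remainders via concentration, and induct to keep the key moment bounded away from zero for $d\theta^2$ just below $1$. Two points need correction.

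First, the decisive recursion is for $a_n$ (the paper's $x_n=\mathbf{E}[X^+(n)]-\pi_1$), not for $b_n$. The expansion reads
\[
a_{n+1}=d\theta^2 a_n+\frac{1-6\pi_1\pi_2}{\pi_1\pi_2^2}\,\frac{d(d-1)}{2}\,\theta^4 a_n^2+R+S,
\]
and the second moment $b_n$ (the paper's $z_n$) enters only through the error $R$; the concentration step you allude to is precisely the statement $b_n/a_n\to\pi_1$, which is what makes $R$ negligible. So it is $a_n$, not $b_n$, that one shows stays bounded below.

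Second, the induction is not quite ``standard'': the Taylor and concentration bounds are only valid once $n\geq N(\pi)$ \emph{and} $a_n\leq\delta(\pi,d)$, so one must separately handle the initial stretch $n<N$ and the regime $a_n>\delta$. The paper does this via an unconditional lower bound $a_{n+1}\geq\gamma(\pi,d)\,a_n$, obtained by comparing the optimal estimator to the suboptimal one that looks only at a single child's subtree. This gives $a_n\geq\pi_2\gamma^n$ for all $n$, and the inductive floor is then set at $\varepsilon=\min\{\pi_2\gamma^N,\delta\gamma\}$, which is independent of $\theta$ and allows one to pick $\theta$ with $d\theta^2<1$ close enough to $1$ that the quadratic gain beats the linear loss.
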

The proof to Theorem \ref{reconstruction} above is given in Section \ref{Proof_of_Theorem_1.1}. The proofs to Theorem \ref{large_degree} and Theorem \ref{nonreconstruction} below are given in Section \ref{Proof_of_Theorem_1.2} and Section \ref{Proof_of_Theorem_1.3} respectively.

\begin{theorem}
	\label{large_degree} For every $\pi$ such that $\pi_1\pi_2<\frac16$, there exists an
	asymptotic result of the reconstruction threshold, that is, when $d$ goes to infinity,
$$\lim_{d\to \infty} d\left(\theta^{\pm}\right)^2=C_\pi,$$
	where $C_\pi$ is a constant taking values in $(0,1)$ and depends only on $\pi$.
\end{theorem}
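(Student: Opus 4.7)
The plan is to make the moment-recursion machinery behind Theorem~\ref{reconstruction} quantitative in $d$ and then to extract a well-defined large-$d$ limit for the threshold. I would start from the weighted magnetization $Y_n$ introduced in the proof of Theorem~\ref{reconstruction}, defined as a suitable normalization of $X_n:=\mathbf{P}(\sigma_\rho=1\mid\sigma(n))-\pi_1$ that incorporates the asymmetry $\pi_1\neq\pi_2$. Under the conditional law $\mathbf{P}_1$, the variable $Y_n$ obeys a distributional recursion driven by $d$ i.i.d.\ offspring copies, and the resulting recursion on its moment sequence $M_n^{(k)}:=\mathbb{E}_1[Y_n^k]$ has linear part $M_{n+1}^{(2)}=d\theta^{2}M_n^{(2)}+\cdots$. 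Non-reconstruction is equivalent to $M_n^{(2)}\to 0$ and reconstruction to $\liminf_n M_n^{(2)}>0$, so the thresholds $\theta^{\pm}$ are exactly the points at which the asymptotic behavior of this recursion changes.

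Next, I rescale by setting $\theta=c/\sqrt{d}$ and Taylor-expand the Bayes update $\Psi_\theta$ in powers of $\theta$. Averaging over the $d$ i.i.d.\ children and applying the concentration estimates developed for Theorem~\ref{reconstruction}, all negligible contributions are collected into a remainder that vanishes uniformly on bounded sets as $d\to\infty$, yielding a scalar limit
$$
M_{n+1}^{(2)}\;=\;c^{2}M_n^{(2)}+A(\pi)\bigl(M_n^{(2)}\bigr)^{2}+B(\pi)\bigl(M_n^{(2)}\bigr)^{3}+\varepsilon_d\bigl(M_n^{(2)}\bigr),
$$
where $A(\pi)$ and $B(\pi)$ are explicit polynomials in $\pi_1-\pi_2$ and $\pi_1\pi_2$ coming from the third and fourth moments of $\Psi_\theta$, and $\varepsilon_d\to 0$ uniformly on compact sets. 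The hypothesis $\pi_1\pi_2<1/6$ of Theorem~\ref{reconstruction} is the exact criterion under which $A(\pi)$ is large enough relative to $B(\pi)$ to create a saddle-node bifurcation of the limit map at some value $c=c^{\ast}(\pi)\in(0,1)$: the trivial fixed point $M=0$ is globally attracting for $c<c^{\ast}(\pi)$, whereas for $c>c^{\ast}(\pi)$ a second, stable fixed point $M^{\ast}(c)>0$ appears. I would then define $C_\pi:=c^{\ast}(\pi)^{2}$ and take this as the candidate limit.

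The final step is to sandwich $d(\theta^{\pm})^{2}$ around $C_\pi$. For any $c<c^{\ast}(\pi)$, the uniform smallness of $\varepsilon_d$ implies that for $d$ sufficiently large the full $d$-dependent recursion still drives $M_n^{(2)}\to 0$, giving non-reconstruction and $d(\theta^{\pm})^{2}\ge c^{2}$ eventually; for any $c>c^{\ast}(\pi)$, the attracting fixed point $M^{\ast}(c)$ is preserved under the perturbation by $\varepsilon_d$, so the recursion stays bounded away from $0$ and reconstruction holds, giving $d(\theta^{\pm})^{2}\le c^{2}$ eventually. The bilateral statement $d(\theta^{-})^{2}\to C_\pi$ follows from the observation that to the order of expansion needed to identify the bifurcation the scalar recursion depends on $\theta$ only through $\theta^{2}$, so $\theta^{+}$ and $-\theta^{-}$ solve the same limiting equation. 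The main obstacle is the intermediate step: pinning down the correct coefficients $A(\pi),B(\pi)$ requires tracking the recursions for $M_n^{(3)}$ and $M_n^{(4)}$ in addition to $M_n^{(2)}$, and matching concentration estimates that dominate $\varepsilon_d$ uniformly in $d$ and $n$; once these are in place, the conclusion reduces to the standard bifurcation analysis of a one-parameter family of scalar maps via the simultaneous equations $F_{c,\pi}(M)=M$ and $F_{c,\pi}'(M)=1$.
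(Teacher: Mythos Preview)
Your proposal has a genuine gap at the central step. The claimed scalar limit
\[
M_{n+1}^{(2)}=c^{2}M_{n}^{(2)}+A(\pi)\bigl(M_{n}^{(2)}\bigr)^{2}+B(\pi)\bigl(M_{n}^{(2)}\bigr)^{3}+\varepsilon_d\bigl(M_{n}^{(2)}\bigr),\qquad \varepsilon_d\to 0\text{ uniformly on compacts},
\]
is not correct: the higher-order terms in $M_{n}^{(2)}$ do \emph{not} vanish as $d\to\infty$ when $M_{n}^{(2)}$ is of order one. What the paper actually proves (Lemma~\ref{Gaussianapproximation}) is that, with $d\theta^2$ fixed, the large-$d$ limit of the full recursion is $x_{n+1}=g(d\theta^2 x_n)$, where $g$ is an explicit but non-polynomial Gaussian expectation obtained from the CLT applied to $\bigl(\sum_j U_j,\sum_j V_j\bigr)$. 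Your $A(\pi),B(\pi)$ are just the first two Taylor coefficients of $g$ at $0$ (Lemma~\ref{power}); the remaining $O(s^4)$ piece of $g$ is a function of $\pi$ and $s$ alone and survives the $d\to\infty$ limit untouched. Hence no finite truncation of the Taylor series determines the bifurcation value, and the constant $C_\pi$ must be defined through $g$ itself: the paper sets $C_\pi=\omega^\ast=\inf\{\omega>0:\exists\, s\in(0,\pi_2),\ g(\omega s)=s\}$ and uses the Taylor expansion only to check $\omega^\ast<1$ when $\pi_1\pi_2<1/6$.

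This matters most on the non-reconstruction side. To show $d(\theta^{\pm})^2\geq C_\pi-\delta$ you must drive the recursion from its initial value $x_0=\pi_2$ down to a small neighborhood of $0$; that requires control of the map on the whole interval $[\,\delta,\pi_2\,]$, where a cubic truncation says nothing. The paper handles this by using the global inequality $g(\omega s)<s$ for $\omega<\omega^\ast$ together with the uniform Gaussian approximation, and only afterwards switches to the quadratic moment estimate (equation~\eqref{lemLA}) once $x_n$ is small. Your plan to close the argument via the simultaneous equations $F_{c,\pi}(M)=M$, $F_{c,\pi}'(M)=1$ for the cubic $F$ therefore identifies neither the correct threshold nor the mechanism that pushes $x_n$ into the regime where any polynomial expansion is valid. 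The missing ingredient is precisely the CLT/Gaussian-approximation step that produces $g$; once you have it, the monotonicity of $g$ (Lemma~\ref{increasing}) and its local expansion replace the bifurcation analysis you sketched.
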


\begin{theorem}
	\label{nonreconstruction} For every $\pi$ such that $\pi_1\pi_2>\frac16$, there
	exists a $D=D(\pi)>0$, such that for $d>D$ the Kesten-Stigum bound is
	sharp, that is
	$$
	\theta^+=d^{-1/2}\quad\textup{and}\quad \theta^-=-d^{-1/2}.
	$$
	Furthermore, there is non-reconstruction at the Kesten-Stigum bound,
	when $\theta=\theta^+$ or $\theta^-$.
\end{theorem}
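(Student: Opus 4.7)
The plan is to establish non-reconstruction at and strictly below the Kesten--Stigum line $|\theta| = d^{-1/2}$, under the hypothesis $\pi_1\pi_2>\tfrac{1}{6}$ and for $d$ larger than an explicit $D(\pi)$. Combined with the trivial direction $|\theta^\pm|\le d^{-1/2}$ (which is the Kesten--Stigum bound recalled in Section 1.2), this will give the sharp equalities $\theta^+=d^{-1/2}$ and $\theta^-=-d^{-1/2}$, and in particular non-reconstruction at the bound.

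As the central object I would use the weighted root-magnetization
$$
X_n \;:=\; \frac{\mathbf{P}\bigl(\sigma_\rho=1\mid \sigma(n)\bigr)-\pi_1}{\pi_1},
$$
for which $\mathbf{E}[X_n]=0$ and non-reconstruction is equivalent to $\mathbf{E}|X_n|\to 0$. Using Bayes' rule across the edges emanating from $\rho$, $X_n$ satisfies a distributional recursion $X_{n+1}=\Psi_{\theta,\pi}(X_n^{(1)},\dots,X_n^{(d)})$ with $d$ i.i.d.\ copies of $X_n$, where $\Psi_{\theta,\pi}$ is an explicit rational function determined by $\mathbf{M}$. Taylor expanding $\Psi_{\theta,\pi}$ around zero and taking expectations, I would derive a scalar recursion for $x_n := \mathbf{E}[X_n^2]$ of the form
\begin{equation*}
x_{n+1} \;=\; d\theta^{2}\,x_n \;+\; \beta(\pi)\,d\theta^{2}\,\mathbf{E}\bigl[X_n^{3}\bigr] \;-\; \gamma(\pi)\,d(d-1)\theta^{4}\,x_n^{2} \;+\; \text{higher order},
\end{equation*}
with explicit polynomial coefficients $\beta(\pi),\gamma(\pi)$ in $\pi_1,\pi_2$.

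The heart of the argument is to compute these coefficients and to show that the inequality $\pi_1\pi_2>\tfrac{1}{6}$ is precisely what makes the net leading nonlinear contribution \emph{stabilizing}, so that for $d\theta^{2}\le 1$ the one-dimensional map $F_{d,\theta,\pi}$ defined by the recursion has $0$ as an attracting fixed point whose basin contains the natural initial condition $x_0$. To control the $\mathbf{E}[X_n^{3}]$ term, I would establish a concentration inequality, adapting the Efron--Stein style argument of \cite{borgs2006kesten} to the tree-indexed decomposition of $X_{n+1}$ into its $d$ subtree contributions, showing that $X_n$ is sub-Gaussian with variance proxy $O(x_n)$; this forces $|\mathbf{E}[X_n^{3}]|\le C\,x_n^{3/2}$ and reduces the recursion to the scalar inequality
$$
x_{n+1} \;\le\; d\theta^{2}\,x_n\bigl(1-c(\pi)\,x_n^{1/2}+o(x_n^{1/2})\bigr),
$$
where $c(\pi)>0$ exactly when $\pi_1\pi_2>\tfrac{1}{6}$.

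The main obstacle will be making the expansion \emph{uniform in $n$}: at $d\theta^{2}=1$ the linear part of the recursion is marginal, so the per-step contraction has amplitude only of order $x_n^{1/2}$, and one must show that the remainder terms do not swamp this small damping over infinitely many iterations. I would handle this by choosing $D(\pi)$ large enough that the $O(1/d)$ remainders arising from truncating the Taylor expansion (after applying the concentration bound) are absorbed into $c(\pi)$, and then treating $F_{d,\theta,\pi}$ as a one-dimensional dynamical system and verifying monotonicity and convexity properties sufficient to track the orbit starting from $x_0$. Once $x_n\to 0$ is established for all $|\theta|\le d^{-1/2}$ with $d\ge D(\pi)$, Jensen gives $\mathbf{E}|X_n|\to 0$ and hence $d_{TV}(\sigma^1(n),\sigma^2(n))\to 0$, proving non-reconstruction at and below the Kesten--Stigum bound; combined with solvability above the bound, this yields $\theta^{\pm}=\pm d^{-1/2}$ and completes the proof.
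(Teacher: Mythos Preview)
Your proposal has two genuine gaps.

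\textbf{Wrong order of the leading nonlinearity.} The correct expansion of the moment recursion (this is equation~\eqref{explicit} in the paper) has the form
\[
x_{n+1} \;=\; d\theta^{2}x_n \;+\; \frac{1-6\pi_1\pi_2}{\pi_1\pi_2^2}\,\frac{d(d-1)}{2}\,\theta^{4}x_n^{2} \;+\; R + S,
\]
so the first nonlinear correction is of order $x_n^{2}$, not $x_n^{3/2}$, and the condition $\pi_1\pi_2>\tfrac16$ is precisely what makes \emph{that} coefficient negative. Your claimed inequality $x_{n+1}\le d\theta^{2}x_n(1-c(\pi)x_n^{1/2}+o(x_n^{1/2}))$ does not follow from your own expansion: a sub-Gaussian bound $|\mathbf{E}[X_n^{3}]|\le Cx_n^{3/2}$ only tells you the cubic contribution is \emph{at most} $O(x_n^{3/2})$ in absolute value, with no control on its sign. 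For small $x_n$ such an uncontrolled $x_n^{3/2}$ term swamps the $x_n^{2}$ damping, so the contraction you need cannot be extracted this way. The paper instead tracks a second moment $z_n=\mathbf{E}(X^+(n)-\pi_1)^2$ alongside $x_n$; the remainder $R$ involves $|z_n/x_n-\pi_1|$ multiplied by $|\theta|^5$, and for large $d$ one uses the trivial bound $|z_n/x_n-\pi_1|\le 1$ together with $|\theta|\le d^{-1/2}$ to make $|R|$ a small fraction of the quadratic term. The term $S$ is handled by a separate concentration estimate (Bennett's inequality applied to $\sum_j\log[1+\theta(Y_j-\pi_1)/\pi_i]$), yielding $|S|\le \varepsilon x_n^{2}$ for large $d$ and small $x_n$, again with no lower bound on $|\theta|$ required.

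\textbf{Missing the ``get small'' step.} Your expansion is only valid once $x_n$ is already small, but $x_0=\pi_2$ is not small, and at $d\theta^{2}=1$ the linear part gives no contraction at all. You wave at ``monotonicity and convexity properties'' of a one-dimensional map, but the map is not known explicitly outside the perturbative regime. The paper handles this with a genuinely different tool: a Gaussian approximation (CLT across the $d$ children) shows $x_{n+1}\approx g(d\theta^{2}x_n)$ for large $d$, where $g$ is an explicit expectation against a bivariate normal; one then proves $g(s)<s$ for \emph{all} $s\in(0,\pi_2]$ when $\pi_1\pi_2>\tfrac16$, which drives $x_n$ below the threshold $\delta$ in finitely many steps. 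Only after that does the perturbative recursion take over. Without an analogue of this global step your argument cannot start.
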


The idea to establish Theorem \ref{reconstruction}, Theorem \ref{large_degree}  and Theorem \ref{nonreconstruction} is the following.
One standard way to classify reconstruction and non-reconstruction is to analyze the quantity $x_n$: the probability of giving a correct guess of the root given the spins $\sigma(n)$ at distance $n$ from the root, minus the probability of guessing the root according to stationary initial distribution. Non-reconstruction means that the mutual information between the root and the spins at distance $n$ goes to zero as $n$ tends to infinity. In Lemma \ref{nonreconstruction_equivalent}, we rigorously show that $x_n$ is always positive and the non-reconstruction is equivalent to
$$
\lim_{n\to \infty}x_n=0.
$$

To analyze whether the reconstruction holds, inspired by \cite{chayes1986mean}, \cite{borgs2006kesten} and \cite{sly2009reconstruction}, we establish the distributional recursion and moment recursion, and then the recursive relation between the $n$th and the $(n+1)$th generation's structure of the tree leads to a corresponding nonlinear dynamical system. In the mean time, we show that the interactions between spins become very weak, if they are sufficiently far away from each other. Therefore, under this weak interacting situation, i.e. $x_n$ being sufficiently small, the concentration analysis is successfully developed and an approximation to the dynamical system is nicely established:
\begin{equation*}
\label{X}
x_{n+1}\approx d\theta^2x_n+\frac{1-6\pi_1\pi_2}{\pi_1\pi_2^2}\frac{d(d-1)}{2}\theta^4x_n^2.
\end{equation*}

The sign of coefficient of the quadratic term which is determined by $1-6\pi_1\pi_2$, plays a crucial role in the asymptotic behavior of $x_n$. When $1-6\pi_1\pi_2>0$, equivalently $\Delta^2>(1-\theta)^2/3$, if $d\theta^2$ is sufficiently close to 1, then $x_n$ does not converge to $0$ and then there is reconstruction beyond the Kesten-Stigum bound. Then our focus is to find this new reconstruction threshold, which is executed in the following three steps: Step one, we rigorously show that, when degree $d$ is large, the interactions between spins become very weak; Step two, using the Central Limit Theorem, we approximate the corresponding collection of small independent samples, to show that the reconstruction function can be asymptotically given by a new Gaussian approximation function $g(s)$, that is, $x_{n+1}\approx g(d\theta^2 x_n)$; Step three, we explore the first several major terms of the Maclaurin series of $g(s)$, and rigorously establish the reconstruction threshold by discussing the fixed point of $g(s)$. On the other hand, when $1-6\pi_1\pi_2<0$, the analysis of large degree asymptotics yields $g(s)<s$, which implies $\lim_{n\to \infty}x_n=0$, that is, there is non-reconstruction.

\section{Preliminaries}
\subsection {Notations}
Let $u_1,\ldots,u_d$ be the children of $\rho$ and $\mathbb{T}_v$ be the
subtree of descendants of $v\in \mathbb{T}$. Denote the $n$th level of the tree as $L(n)=\{v\in\mathbb{V}:
d(\rho, v)=n\}$, where $d(\cdot, \cdot)$ is the graph distance on $\mathbb{T}$. With the notations above, let $\sigma(n)$ and
$\sigma_j(n)$ be the spins on $L_n$ and $L(n)\cap
\mathbb{T}_{u_j}$ respectively. For a configuration $A$ on $L(n)$,
define the posterior function $f_n$ by
$$
f_n(i, A)=\mathbf{P}(\sigma_\rho=i\mid\sigma(n)=A).
$$
By the recursive nature of the tree, for a configuration $A$ on
$L(n+1) \cap \mathbb{T}_{u_j}$, an equivalent form is given by
\begin{equation*}
f_n(i, A)=\mathbf{P}(\sigma_{u_j}=i\mid\sigma_j(n+1)=A).
\end{equation*}
Next, with $i=1, 2$, define
\begin{equation*}
X_i=X_i(n)=f_n(i, \sigma(n)),\quad
X^+=X^+(n)=f_n(1, \sigma^1(n)),\quad
X^-=X^-(n)=f_n(2, \sigma^2(n)),
\end{equation*}
and for $1\leq j\leq d$,
\begin{equation*}
Y_j=Y_j(n)=f_n(1, \sigma_j^1(n+1)),
\end{equation*}
where it is clear that the random variables $\{Y_j\}_{1\leq j\leq
	d}$ are independent and identical in distribution. It is
apparent that
\begin{equation}
\label{identity1} X_1(n)+X_2(n)=1
\end{equation}
and
\begin{equation}
\label{stationary} \mathbf{E}(X_1)=\pi_1,\quad
\mathbf{E}(X_2)=\pi_2.
\end{equation}
We introduce the objective quantities in this paper:
$$
x_n=\mathbf{E}(X^+(n)-\pi_1)
\quad\textup{and}\quad
z_n=\mathbf{E}(X^+(n)-\pi_1)^2.
$$

\subsection{Preparations}
Before proceeding to the analysis, it is convenient to firstly
derive some very useful identities concerning $x_n$.
\begin{lemma}
	\label{lemma1}
	For any $n\in \mathbb{N}\cup\{0\}$, we have
	\begin{equation*}
	x_n=\frac{1}{\pi_1}\mathbf{E}(X_1-\pi_1)^2=\mathbf{E}(X^+(n)-\pi_1)^2+\frac{\pi_2}{\pi_1}\mathbf{E}(X^-(n)-\pi_2)^2\geq
	z_n\geq 0.
	\end{equation*}
\end{lemma}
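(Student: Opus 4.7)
The plan is to reduce both sides to an expression involving only the unconditional law of $X_1$, using a size-biased (Bayes) representation for $X^{\pm}$, and then to verify each equality by direct algebraic manipulation.

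First I would establish the key change-of-measure identity: since $X^{+}$ is by definition $f_n(1,\sigma(n))$ under the law of $\sigma(n)$ conditioned on $\sigma_\rho=1$, and since $X_1=\mathbf{P}(\sigma_\rho=1\mid\sigma(n))$ gives the Radon--Nikodym factor, for every bounded measurable $g$ one has
\[
\mathbf{E}[g(X^{+})]=\mathbf{E}[g(X_1)\mid\sigma_\rho=1]=\frac{1}{\pi_1}\,\mathbf{E}[g(X_1)\,X_1],
\]
and analogously $\mathbf{E}[g(X^{-})]=\pi_2^{-1}\mathbf{E}[g(X_2)\,X_2]$. This reduces every expectation involving $X^{\pm}$ to one involving $X_1$ and $X_2=1-X_1$.

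Next, applying this with $g(x)=x-\pi_1$ gives $x_n=\pi_1^{-1}\mathbf{E}[(X_1-\pi_1)X_1]$, and since $\mathbf{E}X_1=\pi_1$ by \eqref{stationary}, the centering term vanishes and I obtain $x_n=\pi_1^{-1}\mathbf{E}(X_1-\pi_1)^2$, which is the first equality. For the middle equality, apply the change of measure with $g(x)=(x-\pi_1)^2$ and $g(x)=(x-\pi_2)^2$ respectively, and use the crucial observation that \eqref{identity1} forces $X_2-\pi_2=-(X_1-\pi_1)$, so that $(X_2-\pi_2)^2=(X_1-\pi_1)^2$. Then
\[
\mathbf{E}(X^{+}-\pi_1)^2+\frac{\pi_2}{\pi_1}\mathbf{E}(X^{-}-\pi_2)^2
=\frac{1}{\pi_1}\mathbf{E}\!\left[(X_1-\pi_1)^2X_1\right]+\frac{1}{\pi_1}\mathbf{E}\!\left[(X_1-\pi_1)^2(1-X_1)\right],
\]
and the two terms telescope to $\pi_1^{-1}\mathbf{E}(X_1-\pi_1)^2=x_n$.

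Finally, the inequalities are immediate consequences: $z_n=\mathbf{E}(X^{+}-\pi_1)^2\geq 0$ by definition, and the middle equality shows $x_n-z_n=(\pi_2/\pi_1)\,\mathbf{E}(X^{-}-\pi_2)^2\geq 0$. There is no real obstacle here; the only point requiring care is the careful justification of the size-biased identity, which rests on writing out the conditional law of $\sigma(n)$ given $\sigma_\rho=1$ via Bayes' rule and recognizing $f_n(1,\cdot)$ as the appropriate density.
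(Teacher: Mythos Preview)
Your proposal is correct and follows essentially the same route as the paper. The paper also establishes the Bayes/size-biased identity (in the special case $g(x)=x$, via an explicit sum over configurations $A$) to obtain $\mathbf{E}X^{+}=\pi_1^{-1}\mathbf{E}X_1^{2}$ and hence the first equality; for the middle equality it conditions $\mathbf{E}(X_1-\pi_1)^2$ on $\sigma_\rho$ and uses $(X_1-\pi_1)^2=(X_2-\pi_2)^2$, which is exactly your telescoping computation written in conditional-expectation form rather than size-biased form.
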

\begin{proof}By Bayes' rule, we have
	\begin{eqnarray*}
		\mathbf{E}X^+&=&\mathbf{E}f_n(1,\sigma^1(n))
		\\
		&=&\sum_A f_n(1,A)\mathbf{P}(\sigma(n)=A\mid\sigma_\rho=1)
		\\
		&=&\frac{1}{\pi_1}\sum_A\mathbf{P}(\sigma_\rho=1\mid\sigma(n)=A)\mathbf{P}(\sigma(n)=A)f_n(1,A)
		\\
		&=&\frac{1}{\pi_1}\sum_Af_n(1,A)^2\mathbf{P}(\sigma(n)=A)
		\\
		&=&\frac{1}{\pi_1}\mathbf{E}(X_1^2)
	\end{eqnarray*}
	and similarly,
	$$
	\mathbf{E}X^-=\mathbf{E}f_n\left(2,\sigma^2(n)\right)=\frac{1}{\pi_2}\mathbf{E}(X_2^2).
	$$
	Then it follows from equation \eqref{stationary} that
	\begin{equation}
	\label{identityxn}
	x_n=\frac{1}{\pi_1}\left(\mathbf{E}(X_1^2)-\pi_1^2\right)=\frac{1}{\pi_1}\mathbf{E}(X_1-\pi_1)^2.
	\end{equation}
	Next by equation \eqref{identity1}, one has
	\begin{eqnarray*}
		x_n=\frac{1}{\pi_1}\mathbf{E}(X_2-\pi_2)^2
		=\frac{\pi_2}{\pi_1}(\mathbf{E}X^-(n)-\pi_2).
	\end{eqnarray*}
	Therefore, the quantitative relation between $x_n$ and $z_n$ holds:
	\begin{eqnarray*}
		x_n&=&\frac{1}{\pi_1}\mathbf{E}(X_1-\pi_1)^2
		\\
		&=&\frac{1}{\pi_1}\left[\mathbf{P}(\sigma_\rho=1)\mathbf{E}\big((X_1-\pi_1)^2\mid\sigma_\rho=1\big)+\mathbf{P}(\sigma_\rho=2)\mathbf{E}\big((X_2-\pi_2)^2\mid\sigma_\rho=2\big)\right]
		\\
		&=&\frac{1}{\pi_1}\left[\pi_1\mathbf{E}(X^+(n)-\pi_1)^2+\pi_2\mathbf{E}(X^-(n)-\pi_2)^2\right]
		\\
		&=&\mathbf{E}(X^+(n)-\pi_1)^2+\frac{\pi_2}{\pi_1}\mathbf{E}(X^-(n)-\pi_2)^2
		\\
		&\geq&z_n
	\end{eqnarray*}
	
\end{proof}

With the preceding results, we calculate the means and
variances of $Y_j$.
\begin{lemma}
	\label{covariance}
	For each $1\leq j\leq d$, we have
	$$
	\mathbf{E}(Y_j-\pi_1)=\theta x_n
\quad\textup{and}\quad
	\mathbf{E}(Y_j-\pi_1)^2=\pi_1x_n+\theta(z_n-\pi_1x_n).
	$$
\end{lemma}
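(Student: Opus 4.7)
The plan is to condition on the spin at $u_j$ given $\sigma_\rho=1$ and identify the conditional distribution of $Y_j$ using the tree's recursive structure. Since $\sigma_j(n+1)$ depends on $\sigma_\rho$ only through $\sigma_{u_j}$, we have $\mathbf{P}(\sigma_{u_j}=k\mid\sigma_\rho=1)=M_{1k}$, and conditional on $\sigma_{u_j}=k$ the subtree configuration on $L(n+1)\cap\mathbb{T}_{u_j}$ has the law of $\sigma^k(n)$ viewed from the new root $u_j$. Therefore $Y_j\mid\{\sigma_{u_j}=1\}$ is distributed as $X^+(n)$, and $Y_j\mid\{\sigma_{u_j}=2\}$ is distributed as $f_n(1,\sigma^2(n))=1-f_n(2,\sigma^2(n))=1-X^-(n)$.

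For the first identity, I would write
\begin{equation*}
\mathbf{E}Y_j=M_{11}\mathbf{E}X^+(n)+M_{12}\mathbf{E}\bigl(1-X^-(n)\bigr),
\end{equation*}
and use Lemma~\ref{lemma1} together with \eqref{stationary} to express $\mathbf{E}X^+(n)=\pi_1+x_n$ and $\mathbf{E}X^-(n)=\pi_2+\frac{\pi_1}{\pi_2}x_n$ (the latter following from $x_n=\tfrac{1}{\pi_1}\mathbf{E}(X_2-\pi_2)^2=\tfrac{\pi_2}{\pi_1}(\mathbf{E}X^-(n)-\pi_2)$). Grouping terms and using $M_{11}+M_{12}=1$, this reduces the problem to checking the algebraic identity $M_{11}-\frac{\pi_1}{\pi_2}M_{12}=\theta$, which follows directly from $M_{11}=(1+\theta-\Delta)/2$, $M_{12}=(1-\theta+\Delta)/2$, and $\pi_1/\pi_2=(1-\theta-\Delta)/(1-\theta+\Delta)$.

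For the second identity I would apply the same conditioning to $(Y_j-\pi_1)^2$. Because $1-X^-(n)-\pi_1=-(X^-(n)-\pi_2)$, the decomposition yields
\begin{equation*}
\mathbf{E}(Y_j-\pi_1)^2=M_{11}\,\mathbf{E}(X^+(n)-\pi_1)^2+M_{12}\,\mathbf{E}(X^-(n)-\pi_2)^2=M_{11}z_n+M_{12}w_n,
\end{equation*}
where $w_n:=\mathbf{E}(X^-(n)-\pi_2)^2$. Lemma~\ref{lemma1} gives $x_n=z_n+\tfrac{\pi_2}{\pi_1}w_n$, so $w_n=\tfrac{\pi_1}{\pi_2}(x_n-z_n)$. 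Substituting and collecting the $x_n$ and $z_n$ coefficients separately reduces everything to the two algebraic identities $\tfrac{\pi_1}{\pi_2}M_{12}=\pi_1(1-\theta)$ and $M_{11}-\tfrac{\pi_1}{\pi_2}M_{12}=\theta$, both verifiable from the explicit forms of $\mathbf{M}$ and $\pi$. This gives $\pi_1(1-\theta)x_n+\theta z_n=\pi_1 x_n+\theta(z_n-\pi_1 x_n)$, as desired.

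The only real obstacle is the bookkeeping in the asymmetric case: one must carefully track the $\Delta$-dependent entries and use the specific form $\pi_1=(1-\theta-\Delta)/(2(1-\theta))$ so that the miraculous cancellations producing the clean coefficients $\theta$ and $\pi_1(1-\theta)$ actually occur. Once the conditional distributional identifications of $Y_j$ above are in place and Lemma~\ref{lemma1} is invoked to eliminate $\mathbf{E}X^-(n)$ and $w_n$, the remainder is a short direct calculation.
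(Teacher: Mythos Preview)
Your proposal is correct and follows essentially the same route as the paper: condition on $\sigma_{u_j}$ given $\sigma_\rho=1$, identify the conditional law of $Y_j$ as $X^+(n)$ or $1-X^-(n)$, invoke Lemma~\ref{lemma1} to rewrite the relevant moments in terms of $x_n$ and $z_n$, and then verify the algebraic identity $M_{11}-\tfrac{\pi_1}{\pi_2}M_{12}=\theta$. The only cosmetic difference is that the paper computes $\mathbf{E}(Y_j-\pi_1)$ directly rather than computing $\mathbf{E}Y_j$ and subtracting $\pi_1$, but the substance is identical.
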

\begin{proof}
	If $\sigma_{u_j}^1=1$, $Y_j$ is distributed according to $X^+(n)$,
	while if $\sigma_{u_j}^1=2$, $Y_j$ is distributed according to $1-X^-(n)$. Therefore we have
$$
		\mathbf{E}(Y_j-\pi_1)=\mathbf{P}(\sigma_{u_j}^1=1)\mathbf{E}(X^+(n)-\pi_1)+\mathbf{P}(\sigma_{u_j}^1=2)\mathbf{E}(1-X^-(n)-\pi_1)
	=M_{11}x_n-M_{12}\frac{\pi_1}{\pi_2}x_n
	=\theta x_n
$$
	and similarly we have
	\begin{eqnarray*}
		\mathbf{E}(Y_j-\pi_1)^2&=&\mathbf{P}(\sigma_{u_j}^1=1)\mathbf{E}(X^+(n)-\pi_1)^2+\mathbf{P}(\sigma_{u_j}^1=2)\mathbf{E}(1-X^-(n)-\pi_1)^2
		\\
		&=&M_{11}\mathbf{E}(X^+(n)-\pi_1)^2+M_{12}\mathbf{E}(X^-(n)-\pi_2)^2
		\\
		&=&M_{11}z_n+M_{12}\frac{\pi_1}{\pi_2}(x_n-z_n)
		\\
		&=&\pi_1x_n+\theta(z_n-\pi_1x_n),
	\end{eqnarray*}
	as desired.
\end{proof}

\subsection{An Equivalent Condition for Non-reconstruction}
\label{sec3}
\indent If the reconstruction problem is solvable,
then $\sigma(n)$ contains significant information on the root
variable, which may be formulated in several equivalent ways (see \cite{mossel2001reconstruction}, Proposition 14). As a result, in order to
analyze the reconstruction, it suffices to investigate the
asymptotic behavior of $x_n$ as $n$ goes to infinity.
\begin{lemma}
	\label{nonreconstruction_equivalent}
	The non-reconstruction is equivalent to
	$$
	\lim_{n\to\infty}x_n=0.
	$$
\end{lemma}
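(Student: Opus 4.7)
The plan is to translate the total-variation formulation of non-reconstruction into a statement about the moments of $X_1(n)$, and then use Lemma~\ref{lemma1} together with the boundedness $|X_1(n)-\pi_1|\le 1$ to pass between first and second moments.

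First, because the state space has only two elements, the only nontrivial choice in the definition of solvability is $\{i,j\}=\{1,2\}$, so non-reconstruction is equivalent to $\lim_{n\to\infty} d_{TV}(\sigma^1(n),\sigma^2(n))=0$ (the $\limsup$ becomes a limit by the standard monotonicity of total variation under the Markov semigroup on the tree, which is recorded in Proposition~14 of \cite{mossel2001reconstruction}). I would next compute $d_{TV}(\sigma^1(n),\sigma^2(n))$ explicitly in terms of the posterior $f_n(1,\cdot)$. Applying Bayes' rule to rewrite $\mathbf{P}(\sigma(n)=A\mid\sigma_\rho=i)=f_n(i,A)\mathbf{P}(\sigma(n)=A)/\pi_i$ and using $f_n(1,A)+f_n(2,A)=1$ to combine the two terms inside the absolute value, one obtains the identity
\begin{equation*}
d_{TV}(\sigma^1(n),\sigma^2(n))=\frac{1}{2\pi_1\pi_2}\,\mathbf{E}\bigl|X_1(n)-\pi_1\bigr|.
\end{equation*}
Thus non-reconstruction is equivalent to $\mathbf{E}|X_1(n)-\pi_1|\to 0$.

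Second, I would bridge the $L^1$ and $L^2$ norms of $X_1(n)-\pi_1$. One direction is Cauchy--Schwarz: $\mathbf{E}|X_1-\pi_1|\le\bigl(\mathbf{E}(X_1-\pi_1)^2\bigr)^{1/2}$, so if $x_n\to 0$ then by Lemma~\ref{lemma1} $\mathbf{E}(X_1-\pi_1)^2=\pi_1 x_n\to 0$, forcing $\mathbf{E}|X_1-\pi_1|\to 0$. The reverse direction uses the fact that $X_1(n)\in[0,1]$ and hence $|X_1-\pi_1|\le 1$, so $(X_1-\pi_1)^2\le|X_1-\pi_1|$; taking expectations and invoking Lemma~\ref{lemma1} once more gives $x_n=\tfrac{1}{\pi_1}\mathbf{E}(X_1-\pi_1)^2\le\tfrac{1}{\pi_1}\mathbf{E}|X_1-\pi_1|$, so that $\mathbf{E}|X_1-\pi_1|\to 0$ implies $x_n\to 0$. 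Combining these two chains yields the stated equivalence.

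There is no genuinely hard step here; the plan is essentially bookkeeping. The only point that requires care is the Bayes-rule manipulation that collapses the two ratios $f_n(1,A)/\pi_1$ and $f_n(2,A)/\pi_2$ into a single expression proportional to $f_n(1,A)-\pi_1$, because the clean constant $1/(2\pi_1\pi_2)$ depends crucially on using $f_n(1,A)+f_n(2,A)=1$ and on $\pi_1+\pi_2=1$. Once that identity is in place, the passage between the $L^1$ and $L^2$ norms via $|X_1-\pi_1|\le 1$ and Cauchy--Schwarz is routine, and Lemma~\ref{lemma1} converts $\mathbf{E}(X_1-\pi_1)^2$ into $\pi_1 x_n$, completing the proof.
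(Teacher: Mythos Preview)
Your argument is correct, but it takes a different intermediate quantity than the paper does. You work directly from the definition of non-reconstruction: you compute the total-variation distance explicitly as $d_{TV}(\sigma^1(n),\sigma^2(n))=\tfrac{1}{2\pi_1\pi_2}\mathbf{E}|X_1(n)-\pi_1|$ via Bayes' rule, and then sandwich the $L^1$ and $L^2$ norms of $X_1-\pi_1$ using $|X_1-\pi_1|\le 1$ and Cauchy--Schwarz, with Lemma~\ref{lemma1} converting the second moment into $\pi_1 x_n$. The paper instead routes through the maximum-likelihood success probability $\Delta_n=\mathbf{E}\max\{X_1(n),X_2(n)\}$: it shows $x_n+\pi_1\le \Delta_n\le \pi_1+\pi_1^{1/2}x_n^{1/2}$ and then cites \cite{mossel2001reconstruction} for the equivalence of $\Delta_n\to\pi_1$ with non-reconstruction. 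Your approach is arguably more self-contained for this lemma alone, since it engages the total-variation definition directly. The paper's approach, however, produces the inequality $\Delta_n\le \pi_1+\pi_1^{1/2}x_n^{1/2}$ (its equation~\eqref{MLE}) as a by-product, and that inequality is reused later in the proof of Lemma~\ref{ndtf}; so if you adopt your route here, you would need to supply that bound separately when it is called upon. (A minor note: your remark that monotonicity turns the $\limsup$ into a limit is unnecessary, since for a nonnegative sequence $\limsup=0$ already forces $\lim=0$.)
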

\begin{proof}
The maximum-likelihood algorithm, which is the optimal
reconstruction algorithm of $\sigma_\rho$ given $\sigma(n)$, is
successful with probability
\begin{equation*}
\Delta_n=\mathbf{E}\max\{X_1(n), X_2(n)\}.
\end{equation*}
Therefore, the inequality of $x_n+\pi_1\leq
\Delta_n$ holds, which is an analogous result to that of \cite{mezard2006reconstruction}, by noting that the
algorithm that chooses $\sigma_{\rho}$ randomly according to probabilities $X_i$ is correct with probability $x_n+\pi_1$. On the other hand, recalling the assumption that
	$\pi_1\geq\pi_2$, by the Cauchy-Schwartz inequality together with the identities equation \eqref{identity1} and equation \eqref{identityxn},
	one can conclude
	\begin{equation}
	\label{MLE}
	\begin{aligned}
	\Delta_n&=\pi_1+\mathbf{E}\max\left\{X_1(n)-\pi_1,
	X_2(n)-\pi_1\right\}
	\\
	&\leq\pi_1+\mathbf{E}\max\left\{X_1(n)-\pi_1,
	X_2(n)-\pi_2\right\}
	\\
	&=\pi_1+\mathbf{E}|X_1(n)-\pi_1|
	\\
	&\leq\pi_1+\left(\mathbf{E}(X_1(n)-\pi_1)^2\right)^{1/2}
	\\
	&\leq\pi_1+\pi_1^{1/2}x_n^{1/2}.
	\end{aligned}
	\end{equation}
	Hence, one has
	\begin{eqnarray*}
		x_n\leq\Delta_n-\pi_1\leq\pi_1^{1/2}x_n^{1/2},
	\end{eqnarray*}
	implying that $x_n$ converging to $0$ is equivalent to
	$\Delta_n$ converging to $\pi_1$, which is equivalent to
	non-reconstruction (see \cite{ mossel2001reconstruction}).
\end{proof}

\section{Moment Recursion}
\subsection{Distributional Recursion}

In the last section, it is known that the asymptotic behavior of $x_n$
as $n$ goes to infinity plays a crucial role, however it is still too difficult and not necessary
to get the explicit expression for $x_n$. In fact, we only need to
investigate the recursive formula of $x_n$, from which it is
possible to illustrate the trend of $x_n$ as $n$ goes to infinity.
Thus the key idea is to analyze the recursive relation between
$X^+(n)$ and $X^+(n+1)$ by the structure of the tree. Suppose that $A$ is
a configuration on $L(n+1)$ and let $A_j$ be its restriction on
$\mathbb{T}_{u_j}\cap L(n+1)$. Then from the Markov random field
property, we have
\begin{equation}
\label{recursion}
\begin{aligned}
f_{n+1}(1,A)&=\frac{N_1}{N_1+N_2}
\\
&=\frac{\pi_1\prod_{j=1}^d\left[\frac{M_{11}}{\pi_1}f_n(1,A_j)+\frac{M_{12}}{\pi_2}f_n(2,A_j)\right]}{\pi_1\prod_{j=1}^d\left[\frac{M_{11}}{\pi_1}f_n(1,A_j)+\frac{M_{12}}{\pi_2}f_n(2,A_j)\right]+
	\pi_2\prod_{j=1}^d\left[\frac{M_{21}}{\pi_1}f_n(1,A_j)+\frac{M_{22}}{\pi_2}f_n(2,A_j)\right]}
\\
&=\frac{\pi_1\prod_{j=1}^d\left[1+\frac{\theta}{\pi_1}(f_n(1,A_j)-\pi_1)\right]}{\pi_1\prod_{j=1}^d\left[1+\frac{\theta}{\pi_1}(f_n(1,A_j)-\pi_1)\right]+
	\pi_2\prod_{j=1}^d\left[1-\frac{\theta}{\pi_2}(f_n(1,A_j)-\pi_1)\right]},
\end{aligned}
\end{equation}
where
\begin{eqnarray*}
	N_1&=&\pi_1\prod_{j=1}^d[M_{11}\mathbf{P}(\sigma_j(n+1)=A_j\mid\sigma_{u_j}=1)+M_{12}\mathbf{P}(\sigma_j(n+1)=A_j\mid\sigma_{u_j}=2)]
	\\
	&=&\pi_1\prod_{j=1}^d\left[\frac{M_{11}}{\pi_1}f_n(1,A_j)+\frac{M_{12}}{\pi_2}f_n(2,A_j)\right]\mathbf{P}(\sigma_j(n+1)=A_j)
\end{eqnarray*}
and
\begin{eqnarray*}
	N_2&=&\pi_2\prod_{j=1}^d\left[M_{21}\mathbf{P}(\sigma_j(n+1)=A_j\mid\sigma_{u_j}=1)+M_{22}\mathbf{P}(\sigma_j(n+1)=A_j\mid\sigma_{u_j}=2)\right]
	\\
	&=&\pi_2\prod_{j=1}^d\left[\frac{M_{21}}{\pi_1}f_n(1,A_j)+\frac{M_{22}}{\pi_2}f_n(2,A_j)\right]\mathbf{P}(\sigma_j(n+1)=A_j).
\end{eqnarray*}

Next conditioning the root being $1$ and setting $A=\sigma^1(n+1)$, we have
$$
X^+(n+1)=\frac{\pi_1Z_1}{\pi_1Z_1+\pi_2Z_2},
$$
where
$$
Z_1=\prod_{j=1}^d\left[1+\frac{\theta}{\pi_1}(f_n(1,A_j)-\pi_1)\right]=\prod_{j=1}^d\left[1+\frac{\theta}{\pi_1}(Y_j(n)-\pi_1)\right]
$$
and
$$
Z_2=\prod_{j=1}^d\left[1-\frac{\theta}{\pi_2}(f_n(1,A_j)-\pi_1)\right]=\prod_{j=1}^d\left[1-\frac{\theta}{\pi_2}(Y_j(n)-\pi_1)\right].
$$

\subsection{Main Expansion of $x_{n+1}$}

With the help of those preliminary results, we are about to figure out the
recursive relation regarding $x_{n+1}$, specifically, its main
expansion, which would play a crucial rule in further
discussions. Firstly we take care of the approximating means and variances of
$Z_i$ and the Taylor
series approximations.
\begin{lemma} \label{Taylor} For each positive integer $k$, there
	exists a $C=C(\pi, k)$ which only depends on $\pi$ and $k$, such that for
	each $0\leq \ell, m\leq k$,
	$$
	\mathbf{E}Z_1^{\ell}Z_2^{m}\leq C,\quad
	\left|\mathbf{E}Z_1^{\ell}Z_2^{m}-1-du\right|\leq
	Cx_n^2,\quad
		\left|\mathbf{E}Z_1^{\ell}Z_2^{m}-1-du-\frac{d(d-1)}{2}u^2\right|\leq
		Cx_n^3,
$$
where $$u=\mathbf{E}\left[1+\frac{\theta}{\pi_1}(Y_1(n)-\pi_1)\right]^{\ell}\left[1-\frac{\theta}{\pi_2}(Y_1(n)-\pi_1)\right]^{m}-1.$$
\end{lemma}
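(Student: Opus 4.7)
The plan is to exploit the conditional independence of the $Y_j$'s and reduce the whole claim to a one-dimensional binomial expansion. Define $W_j := [1 + (\theta/\pi_1)(Y_j - \pi_1)]^{\ell}[1 - (\theta/\pi_2)(Y_j - \pi_1)]^{m}$, so that $Z_1^{\ell} Z_2^{m} = \prod_{j=1}^d W_j$. Since the $Y_j$'s are i.i.d., the $W_j$'s are i.i.d., and factorization yields
$$\mathbf{E} Z_1^{\ell}Z_2^{m} = (\mathbf{E}W_1)^d = (1+u)^d.$$
All three inequalities then reduce to scalar estimates comparing $(1+u)^d$ with the first terms of its binomial expansion, provided $u$ is suitably small.

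The key technical step is the sharp bound $|u| \le C(\pi,k)\,\theta^2 x_n$. Let $\eta := Y_1 - \pi_1$; since $Y_1\in[0,1]$, $\eta$ is bounded by a $\pi$-dependent constant. Expanding $W_1$ as a polynomial in $\eta$ gives $W_1 - 1 = \sum_{i=1}^{\ell+m} c_i\,\eta^i$ where $|c_i|\le C(\pi,k)\,|\theta|^i$. Combining $\mathbf{E}\eta=\theta x_n$ from Lemma \ref{covariance} with $\mathbf{E}|\eta|^i = O(x_n)$ for $i\ge 2$ (using Lemma \ref{lemma1} and boundedness of $\eta$), the linear contribution $c_1\mathbf{E}\eta$ is $O(\theta)\cdot O(\theta x_n)$ while the higher contributions $c_i\mathbf{E}\eta^i$ are $O(\theta^i)\cdot O(x_n)$ for $i\ge 2$; both are $O(\theta^2 x_n)$. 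Under the standing assumption $d\theta^2\le 1$ and uniform boundedness of $x_n$ (from Lemma \ref{lemma1}), this yields $d|u|\le C(\pi,k)$.

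The final step is the binomial comparison. Using $\binom{d}{k}|u|^k\le (d|u|)^k/k!$ and $\sum_{k\ge 0}(d|u|)^k/k!=e^{d|u|}\le C$, one obtains the first bound $(1+u)^d\le C$. For $k_0\in\{2,3\}$, the identity $\binom{d}{k}\le \binom{d}{k_0}\cdot d^{k-k_0}k_0!/k!$ for $k\ge k_0$ yields
$$\sum_{k\ge k_0}\binom{d}{k}|u|^k \;\le\; \binom{d}{k_0}|u|^{k_0}\,e^{d|u|} \;\le\; \frac{(d|u|)^{k_0}}{k_0!}\,e^{d|u|} \;\le\; C(d\theta^2)^{k_0}x_n^{k_0} \;\le\; C\,x_n^{k_0},$$
which, combined with the telescoping identities $(1+u)^d - 1 - du = \sum_{k\ge 2}\binom{d}{k}u^k$ and $(1+u)^d - 1 - du - \tfrac{d(d-1)}{2}u^2 = \sum_{k\ge 3}\binom{d}{k}u^k$, delivers the second and third bounds.

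The main obstacle is securing the correct scaling in the second step: it is essential that $u$ carry a factor of $\theta^2$ rather than merely $\theta$, because this $\theta^2$ is what pairs with $d$ through $d\theta^2\le 1$ to make $d|u|$ bounded, and hence the tails in the third step of order $x_n^{k_0}$ rather than blowing up with $d$. The subtle point is that the linear-in-$\eta$ piece of $u$ looks like $(\ell\theta/\pi_1 - m\theta/\pi_2)\mathbf{E}\eta$, which only carries an explicit $\theta$ from its coefficient; the extra hidden $\theta$ emerges only after invoking $\mathbf{E}\eta = \theta x_n$ from Lemma \ref{covariance}. Without this cancellation, the entire bootstrap collapses.
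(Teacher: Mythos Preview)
Your proof is correct and follows essentially the same route as the paper: factorize $\mathbf{E}Z_1^\ell Z_2^m=(1+u)^d$ via independence, show $|u|\le C(\pi,k)\theta^2 x_n$ by expanding in powers of $Y_1-\pi_1$ and invoking Lemma~\ref{covariance} (the linear term picks up an extra $\theta$ from $\mathbf{E}(Y_1-\pi_1)=\theta x_n$, the higher terms from $|Y_1-\pi_1|^i\le (Y_1-\pi_1)^2$), then bound the binomial tail using $d|u|\le Cx_n$ from $d\theta^2\le 1$. Your tail estimate via $\binom{d}{k}\le \binom{d}{k_0}d^{k-k_0}k_0!/k!$ is a slight variant of the paper's simpler $\binom{d}{i}\le d^i/i!$, but both yield the same $x_n^{k_0}$ bound.
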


\begin{proof}
Since $\{Y_j\}_{1\leq j\leq
	d}$ are independent and identical in distribution, we have
        \begin{equation*}
		\begin{aligned}
		\mathbf{E}Z_1^{\ell}Z_2^{m}&=\prod_{j=1}^d\mathbf{E}\left[1+\frac{\theta}{\pi_1}(Y_j(n)-\pi_1)\right]^{\ell}\left[1-\frac{\theta}{\pi_2}(Y_j(n)-\pi_1)\right]^{m}
		\\
		&=\left(\mathbf{E}\left[1+\frac{\theta}{\pi_1}(Y_1(n)-\pi_1)\right]^{\ell}\left[1-\frac{\theta}{\pi_2}(Y_1(n)-\pi_1)\right]^{m}\right)^d=(1+u)^d.
		\end{aligned}
		\end{equation*}
It follows from $0\leq Y_{1}\leq 1$ that $|Y_1(n)-\pi_1|\leq1$, and then when $i\geq2$, we have $|Y_1(n)-\pi_1|^i\leq(Y_1(n)-\pi_1)^2$. Therefore Lemma~\ref{covariance} implies that
\begin{equation*}
\begin{aligned}
|u|&=\left|\mathbf{E}\left[1+\frac{\theta}{\pi_1}(Y_1(n)-\pi_1)\right]^{\ell}\left[1-\frac{\theta}{\pi_2}(Y_1(n)-\pi_1)\right]^{m}-1\right|
\\
&=\left|\sum_{i=1}^{\ell+m}c_i\mathbf{E}[\theta(Y_1-\pi_1)]^i\right|
\\
&\leq |c_1|\theta^2x_n+\sum_{i=2}^{\ell+m}|c_i|\theta^2\left[\pi_1x_n+\theta(z_n-\pi_1x_n)\right]
\\
&\leq c\theta^2x_n,
\end{aligned}
\end{equation*}
where $\{c_i\}_{i=1}^{l+m}$ and $c$ depend on $\pi$ and $k$	only. Consequently, we have $d|u|\leq cx_n$ by means of $d\theta^2\leq1$. Using the
	binomial expansion and the Remainder Theorem, we have
	\begin{eqnarray*}
		\left|(1+u)^d-\sum_{i=0}^h{d \choose i} u^i \right| \leq
		\sum_{i=h+1}^\infty\frac{d^i}{i!}|u|^i\leq e^{c}c^{h+1}x_n^{h+1}.
	\end{eqnarray*}
Taking $h=0, 1, 2$ respectively and $\displaystyle C=\max_{h\in \{0, 1, 2\}}\left\{e^{c}c^{h+1}\right\}$ complete the proof.
\end{proof}

Next we aim to figure out the recursive relation of $x_{n+1}$ by virtue of the following identity
\begin{equation}
\label{identity}
\frac{a}{s+r}=\frac{a}{s}-\frac{ar}{s^2}+\frac{r^2}{s^2}\frac{a}{s+r}.
\end{equation}
Specifically, plugging $a=\pi_1Z_1$, $r=\pi_1Z_1+\pi_2Z_2-1$ and
$s=1$ in equation \eqref{identity} yields
\begin{equation}
\label{expansion}
\begin{aligned} x_{n+1}&=\mathbf{E}X^+(n+1)-\pi_1
\\
&=\mathbf{E}(\pi_1Z_1)-\mathbf{E}[\pi_1Z_1(\pi_1Z_1+\pi_2Z_2-1)]
+\mathbf{E}\left[(\pi_1Z_1+\pi_2Z_2-1)^2\frac{\pi_1Z_1}{\pi_1Z_1+\pi_2Z_2}\right]-\pi_1.
\end{aligned}
\end{equation}
In the following, we analyze terms
in equation \eqref{expansion}, using
the notation $O_\pi$ to emphasize that the constant associated with the $O$-term depends on $\pi$ only
\begin{eqnarray*}
	\mathbf{E}Z_1&=&\mathbf{E}\prod_{j=1}^d\left[1+\frac{\theta}{\pi_1}(Y_j-\pi_1)\right]
	\\
	&=&1+\frac{d\theta}{\pi_1}\mathbf{E}(Y_1-\pi_1)+\frac{d(d-1)}{2}\left[\frac{\theta}{\pi_1}\mathbf{E}(Y_1-\pi_1)\right]^2+O_\pi(x_n^3)
	\\
	&=&1+\frac{d\theta^2}{\pi_1}x_n+\frac{d(d-1)}{2}\frac{\theta^4}{\pi_1^2}x_n^2+O_\pi(x_n^3),
\end{eqnarray*}
\begin{eqnarray*}
	\mathbf{E}Z_1^2&=&\mathbf{E}\prod_{j=1}^d\left[1+\frac{\theta}{\pi_1}(Y_j-\pi_1)\right]^2
	\\
	&=&1+d\left\{\mathbf{E}\left[1+\frac{\theta}{\pi_1}(Y_1-\pi_1)\right]^2-1\right\}+\frac{d(d-1)}{2}\left\{\mathbf{E}\left[1+\frac{\theta}{\pi_1}(Y_1-\pi_1)\right]^2-1\right\}^2+O_\pi(x_n^3)
	\\
	&=&1+d\left[\frac{\theta^2}{\pi_1}(3-\theta)x_n+\frac{\theta^3}{\pi_1^2}z_n\right]+\frac{d(d-1)}{2}\left[\frac{\theta^2}{\pi_1}(3-\theta)x_n+\frac{\theta^3}{\pi_1^2}z_n\right]^2+O_\pi(x_n^3),
\end{eqnarray*}
and
\begin{equation*}
\begin{aligned}
\mathbf{E}Z_1Z_2
&=\mathbf{E}\prod_{j=1}^d\left[1+\frac{\theta}{\pi_1}(Y_j-\pi_1)\right]\left[1-\frac{\theta}{\pi_2}(Y_j-\pi_1)\right]
\\
&=1+d\left\{\mathbf{E}\left[1+\frac{\theta}{\pi_1}(Y_1-\pi_1)\right]\left[1-\frac{\theta}{\pi_2}(Y_1-\pi_1)\right]-1\right\}
\\
&\quad+\frac{d(d-1)}{2}\left\{\mathbf{E}\left[1+\frac{\theta}{\pi_1}(Y_1-\pi_1)\right]\left[1-\frac{\theta}{\pi_2}(Y_1-\pi_1)\right]-1\right\}^2+O_\pi(x_n^3)
\\
&=1+d\left[\theta^2\left(\frac{1}{\pi_1}+\frac{\theta-2}{\pi_2}\right)x_n-\frac{\theta^3}{\pi_1\pi_2}z_n\right]
+\frac{d(d-1)}{2}\left[\theta^2\left(\frac{1}{\pi_1}+\frac{\theta-2}{\pi_2}\right)x_n-\frac{\theta^3}{\pi_1\pi_2}z_n\right]^2+O_\pi(x_n^3).
\end{aligned}
\end{equation*}
Then the preceding results yield
\begin{equation*}
\begin{aligned}
&\mathbf{E}\pi_1Z_1(\pi_1Z_1+\pi_2Z_2-1)
\\
&=\pi_1^2\mathbf{E}Z_1^2+\pi_1\pi_2\mathbf{E}Z_1Z_2-\pi_1\mathbf{E}Z_1
\\
&=\pi_1^2\frac{d(d-1)}{2}\left[\frac{\theta^2}{\pi_1}(3-\theta)x_n+
\frac{\theta^3}{\pi_1^2}z_n\right]^2+\pi_1\pi_2\frac{d(d-1)}{2}\left[\theta^2\left(\frac{1}{\pi_1}+\frac{\theta-2}{\pi_2}\right)x_n-\frac{\theta^3}{\pi_1\pi_2}z_n\right]^2
\\
&\quad-\frac{d(d-1)}{2}\frac{\theta^4}{\pi_1}x_n^2+O_\pi(x_n^3).
\end{aligned}
\end{equation*}
Similarly, we have
\begin{eqnarray*}
	\mathbf{E}Z_2&=&\mathbf{E}\prod_{j=1}^d\left[1-\frac{\theta}{\pi_2}(Y_j-\pi_1)\right]
	\\
	&=&1-\frac{d\theta}{\pi_2}\mathbf{E}(Y_1-\pi_1)+\frac{d(d-1)}{2}\left[\frac{\theta}{\pi_2}\mathbf{E}(Y_1-\pi_1)\right]^2+O_\pi(x_n^3)
	\\
	&=&1-\frac{d\theta^2}{\pi_2}x_n+\frac{d(d-1)}{2}\frac{\theta^4}{\pi_2^2}x_n^2+O_\pi(x_n^3),
\end{eqnarray*}
\begin{eqnarray*}
	\mathbf{E}Z_2^2&=&\mathbf{E}\prod_{j=1}^d\left[1-\frac{\theta}{\pi_2}(Y_j-\pi_1)\right]^2
	\\
	&=&1+d\left\{\mathbf{E}\left[1-\frac{\theta}{\pi_2}(Y_1-\pi_1)\right]^2-1\right\}+\frac{d(d-1)}{2}\left\{\mathbf{E}\left[1-\frac{\theta}{\pi_2}(Y_1-\pi_1)\right]^2-1\right\}^2+O_\pi(x_n^3)
	\\
	&=&1+d\left\{\frac{\theta^2}{\pi_2}\left[\frac{\pi_1}{\pi_2}(1-\theta)-2\right]x_n+\frac{\theta^3}{\pi_2^2}z_n\right\}
	+\frac{d(d-1)}{2}\left\{\frac{\theta^2}{\pi_2}\left[\frac{\pi_1}{\pi_2}(1-\theta)-2\right]x_n+\frac{\theta^3}{\pi_2^2}z_n\right\}^2+O_\pi(x_n^3),
\end{eqnarray*}
and then
\begin{equation}
\label{square}
\begin{aligned}
&\mathbf{E}(\pi_1Z_1+\pi_2Z_2-1)^2
\\
=&\pi_1^2\mathbf{E}(Z_1^2)+\pi_2^2\mathbf{E}(Z_2^2)+2\pi_1\pi_2\mathbf{E}(Z_1Z_2)-2\pi_1\mathbf{E}(Z_1)-2\pi_2\mathbf{E}(Z_2)+1
\\
=&\pi_1^2\frac{d(d-1)}{2}\left[\frac{\theta^2}{\pi_1}(3-\theta)x_n+\frac{\theta^3}{\pi_1^2}z_n\right]^2
+\pi_2^2\frac{d(d-1)}{2}\left\{\frac{\theta^2}{\pi_2}\left[\frac{\pi_1}{\pi_2}(1-\theta)-2\right]x_n+\frac{\theta^3}{\pi_2^2}z_n\right\}^2
\\
&\quad+\pi_1\pi_2d(d-1)\left[\theta^2\left(\frac{1}{\pi_1}+\frac{\theta-2}{\pi_2}\right)x_n-\frac{\theta^3}{\pi_1\pi_2}z_n\right]^2
-d(d-1)\frac{\theta^4}{\pi_1}x_n^2-d(d-1)\frac{\theta^4}{\pi_2}x_n^2+O_\pi(x_n^3).
\end{aligned}
\end{equation}
As a consequence, we have
\begin{equation}
\label{explicit}
\begin{aligned}
x_{n+1}&=\mathbf{E}(\pi_1Z_1)-\mathbf{E}\pi_1Z_1(\pi_1Z_1+\pi_2Z_2-1)+\pi_1\mathbf{E}(\pi_1Z_1+\pi_2Z_2-1)^2-\pi_1+S
\\
&=d\theta^2x_n+\frac{d(d-1)}{2}\theta^4x_n^2\left\{\left(\frac{2}{\pi_1}-\frac{2}{\pi_2}\right)-\pi_2\left[3+\frac{\theta}{\pi_1}\left(\frac{z_n}{x_n}-\pi_1\right)\right]^2+\pi_1\pi_2(\pi_1-\pi_2)\right.
\\
&\quad\left.\times\left[\left(\frac{1}{\pi_1}-\frac{2}{\pi_2}\right)-\frac{\theta}{\pi_1\pi_2}\left(\frac{z_n}{x_n}-\pi_1\right)\right]^2
+\pi_1\left[\frac{\pi_1}{\pi_2}-2+\frac{\theta}{\pi_2}\left(\frac{z_n}{x_n}-\pi_1\right)\right]^2\right\}+S+O_\pi(x_n^3)
\\
&=d\theta^2x_n+\frac{1-6\pi_1\pi_2}{\pi_1\pi_2^2}\frac{d(d-1)}{2}\theta^4x_n^2+R+S,
\end{aligned}
\end{equation}
where
\begin{equation}
\label{R} |R|\leq
C_Rx_n^2\left(\frac{d(d-1)}{2}|\theta|^5\left|\frac{z_n}{x_n}-\pi_1\right|+x_n\right)
\end{equation}
with $C_R$ a constant depending only on $\pi$, and
\begin{equation}
\label{sly2009reconstruction}
S=\mathbf{E}(\pi_1Z_1+\pi_2Z_2-1)^2\left(\frac{\pi_1Z_1}{\pi_1Z_1+\pi_2Z_2}-\pi_1\right)
\end{equation}
which will be handled in the following concentration investigation.

\section{Concentration Analysis}
Noting that $Z_1, Z_2\geq0$, we have $0\leq
\frac{\pi_1Z_1}{\pi_1Z_1+\pi_2Z_2}\leq 1$. It is concluded
from equations \eqref{expansion} and \eqref{square} that
\begin{equation}
\label{lemLA}
|x_{n+1}-d\theta^2x_n|\leq Cx_n^2\leq\varepsilon x_n,
\end{equation}
where $C=C(\pi)$ depends only on $\pi$. In equation \eqref{lemLA}, the first inequality follows
from Lemma~\ref{lemma1} which states that $0\leq z_n\leq x_n$, and the last inequality holds if
$x_n<\delta$ for $\delta=\delta(\pi, \varepsilon)$ small enough.  
The following lemma ensures that $x_n$ does not drop too fast.
\begin{lemma}
	\label{ndtf} For any $\varrho>0$, there exists a constant
	$\gamma=\gamma(\pi, \varrho)>0$, such that for all $n$ when
	$|\theta|>\varrho$,
	$$
	x_{n+1}\geq \gamma x_n.
	$$
\end{lemma}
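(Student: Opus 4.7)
The plan is to lower-bound $x_{n+1}$ by the information content carried by a single child's subtree, and then to compute that one-child quantity exactly. Define
\[
\tilde X_1(n+1) := \mathbf{P}(\sigma_\rho = 1 \mid \sigma_1(n+1)), \qquad \tilde x_{n+1} := \frac{1}{\pi_1}\mathbf{E}\bigl(\tilde X_1(n+1) - \pi_1\bigr)^2.
\]
Since $\sigma_1(n+1) \subset \sigma(n+1)$ as $\sigma$-algebras, the tower property gives $\tilde X_1(n+1) = \mathbf{E}[X_1(n+1) \mid \sigma_1(n+1)]$, so by the law of total variance $\mathrm{Var}(X_1(n+1)) \geq \mathrm{Var}(\tilde X_1(n+1))$; combined with Lemma~\ref{lemma1} this gives $x_{n+1} \geq \tilde x_{n+1}$. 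The entire lemma reduces to proving the identity $\tilde x_{n+1} = \theta^2 x_n$, because then $x_{n+1} \geq \theta^2 x_n \geq \varrho^2 x_n$ and $\gamma := \varrho^2$ works.

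To establish this identity I would pass to a chi-square representation. Set $P_n(a) := \mathbf{P}(\sigma(n) = a \mid \sigma_\rho = 1)$ and $Q_n(a) := \mathbf{P}(\sigma(n) = a \mid \sigma_\rho = 2)$. A short Bayes calculation mirroring the proof of Lemma~\ref{lemma1} expresses
\[
x_n = \pi_1\pi_2^2 \sum_a \frac{\bigl(P_n(a) - Q_n(a)\bigr)^2}{\pi_1 P_n(a) + \pi_2 Q_n(a)}.
\]
Because $\mathbb{T}_{u_1}$ is itself a $d$-ary tree in which level $n+1$ of $\mathbb{T}$ coincides with its own level $n$, we have $\mathcal{L}(\sigma_1(n+1) \mid \sigma_{u_1} = k) = P_n$ for $k=1$ and $=Q_n$ for $k=2$. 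Conditioning on $\sigma_{u_1}$ across the edge $\rho \to u_1$ then yields
\[
\tilde P_n := \mathcal{L}(\sigma_1(n+1) \mid \sigma_\rho = 1) = M_{11}P_n + M_{12}Q_n, \qquad \tilde Q_n := \mathcal{L}(\sigma_1(n+1) \mid \sigma_\rho = 2) = M_{21}P_n + M_{22}Q_n.
\]

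Two structural facts close the calculation. First, the eigenvalue relations $M_{11} - M_{21} = M_{22} - M_{12} = \theta$, visible from the explicit form of $\mathbf{M}$, give $\tilde P_n - \tilde Q_n = \theta(P_n - Q_n)$. Second, the stationarity $\pi\mathbf{M} = \pi$, namely $\pi_1 M_{1j} + \pi_2 M_{2j} = \pi_j$, gives $\pi_1 \tilde P_n + \pi_2 \tilde Q_n = \pi_1 P_n + \pi_2 Q_n$, so that the denominator in the chi-square formula is unchanged while the numerator picks up a clean factor of $\theta^2$. Substituting into the same chi-square representation applied to $\tilde x_{n+1}$ produces $\tilde x_{n+1} = \theta^2 x_n$ exactly. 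The delicate step is this invariance of the denominator under the one-edge transition: it is exactly where stationarity of $\pi$ enters and is precisely what forces the one-child contraction factor to be $\theta^2$ rather than something strictly smaller, after which the bound $x_{n+1} \geq \varrho^2 x_n$ is immediate under the hypothesis $|\theta| > \varrho$.
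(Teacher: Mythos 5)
Your argument is correct, and it actually gives a strictly stronger bound than the paper's. Both proofs start from the same one-edge observation — conditioning on the spins in a single child's subtree, $\tilde X_1(n+1)=\mathbf{P}(\sigma_\rho=1\mid\sigma_1(n+1))=\pi_1+\theta\bigl(f_n(1,\sigma_1(n+1))-\pi_1\bigr)$, so the one-child analogue of $x_n$ contracts by exactly $\theta^2$ — but you differ in how you compare the one-child quantity to $x_{n+1}$. The paper feeds $\pi_1+\theta^2 x_n$ into the maximum-likelihood sandwich $\pi_1+\theta^2 x_n\le\Delta_{n+1}\le\pi_1+\pi_1^{1/2}x_{n+1}^{1/2}$ of inequality \eqref{MLE}, which only yields the quadratic bound $x_{n+1}\ge\theta^4 x_n^2/\pi_1$; this degenerates as $x_n\to 0$, so they must patch it with the linearization \eqref{lemLA} and a two-case split on whether $x_n<\delta$, ending with $\gamma=\min\{\varrho^2,\varrho^4\delta\}$. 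You instead project via the tower property and the law of total variance, $\mathrm{Var}(X_1(n+1))\ge\mathrm{Var}\bigl(\mathbf{E}[X_1(n+1)\mid\sigma_1(n+1)]\bigr)=\mathrm{Var}(\tilde X_1(n+1))$, and since by Lemma~\ref{lemma1} both $x_{n+1}$ and $\tilde x_{n+1}$ are $\pi_1^{-1}$ times the corresponding variances, this gives the linear bound $x_{n+1}\ge\tilde x_{n+1}=\theta^2 x_n\ge\varrho^2 x_n$ in one stroke, with $\gamma=\varrho^2$ uniform in $n$ and no case analysis. Your chi-square representation $x_n=\pi_1\pi_2^2\sum_a (P_n(a)-Q_n(a))^2/(\pi_1 P_n(a)+\pi_2 Q_n(a))$ and the two facts $\tilde P_n-\tilde Q_n=\theta(P_n-Q_n)$ and $\pi_1\tilde P_n+\pi_2\tilde Q_n=\pi_1 P_n+\pi_2 Q_n$ are all verified, though the identity $\tilde x_{n+1}=\theta^2 x_n$ also drops out directly from the affine relation $\tilde X_1(n+1)-\pi_1=\theta\bigl(f_n(1,\sigma_1(n+1))-\pi_1\bigr)$ together with the distributional equality of $f_n(1,\sigma_1(n+1))$ with $X_1(n)$, without needing to pass through densities. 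In short: correct, a genuinely different comparison step, and cleaner.
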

\begin{proof} For a configuration $A$ on $\mathbb{T}_{u_1}\cap L(n+1)$, we have
	\begin{eqnarray*}
		f_{n+1}^*(1, A)&:=&\mathbf{P}(\sigma_\rho=1\mid\sigma_1(n+1)=A)
		\\
		&=&\pi_1\frac{\mathbf{P}(\sigma_1(n+1)=A\mid\sigma_\rho=1)}{\mathbf{P}(\sigma_1(n+1)=A)}
		\\
		&=&\pi_1\frac{M_{11}\mathbf{P}(\sigma_1(n+1)=A\mid\sigma_{u_1}=1)+M_{12}\mathbf{P}(\sigma_1(n+1)=A\mid\sigma_{u_1}=2)}{\mathbf{P}(\sigma_1(n+1)=A)}
		\\
		&=&\pi_1\left[\frac{M_{11}}{\pi_1}f_n(1,
		A)+\frac{M_{12}}{\pi_2}\left(1-f_n(1, A)\right)\right]
		\\
		&=&\pi_1\left[1+\frac{\theta}{\pi_1}(f_n(1, A)-\pi_1)\right],
	\end{eqnarray*}
	and then
	$$
	\mathbf{E}f_{n+1}^*(1, \sigma_1^1(n+1))=\pi_1+\theta^2x_n.
	$$
	Therefore, it follows from equation \eqref{MLE} that
	\begin{eqnarray*}
		\pi_1+\theta^2x_n\leq\Delta_{n+1}
		\leq\pi_1+\pi_1^{1/2}x_{n+1}^{1/2},
	\end{eqnarray*}
	namely,
	\begin{equation}
	\label{nftf} x_{n+1}\geq \frac{1}{\pi_1}\theta^4x_n^2\geq
	\varrho^4x_n^2.
	\end{equation}
	
	Next choosing $\varepsilon=\varrho^2$, equation \eqref{lemLA} indicates that there exists a $\delta=\delta(\pi,
	\varepsilon)>0$, such that if $x_n<\delta$ then
	$$
	x_{n+1}\geq (d\theta^2-\varepsilon)x_n\geq
	(d-1)\varrho^2x_n\geq\varrho^2x_n.
	$$
	On the other hand, if $x_n\geq \delta$, equation \eqref{nftf} becomes
	$x_{n+1}\geq\varrho^4\delta x_n$. Finally taking
	$\gamma=\min\{\varrho^2, \varrho^4\delta\}$ completes the proof.
\end{proof}

Actually, it can be seen from equation \eqref{explicit} that the estimates of $R$
and $S$ would play a key role in the recursive expression of
$x_{n+1}$, hence we will verify that
$\frac{\pi_1Z_1}{\pi_1+\pi_2Z_2}$ and $\frac{z_n}{x_n}$ are both
sufficiently around $\pi_1$, analogous to the concentration analysis result
in \cite{sly2009reconstruction}. 
In the following lemma, we firstly establish a technical uniqueness result where the set of vertices which can be conditioned is limited to a set of $k$ vertices.

\begin{lemma}
	\label{effectlittle} For any $\varepsilon>0$ and positive integer
	$k$, there exists $M=M(\pi, \varepsilon, k)$, such that for any
	collection of vertices $v_1,\ldots, v_k\in L(M)$,
	$$
	\sup_{i_1,\ldots,
		i_k\in\mathcal{C}}\left|\mathbf{P}(\sigma_\rho=1\mid\sigma_{v_j}=i_j,
	1\leq j\leq k)-\pi_1\right|\leq \varepsilon
	$$
\end{lemma}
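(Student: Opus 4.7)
The plan is to proceed by strong induction on $k$, proving the slightly stronger statement that for any vertex $v\in\mathbb{T}$ and any $k$ descendants $v_1,\ldots,v_k$ of $v$ at tree distance $M$ from $v$, the conditional distribution of $\sigma_v$ given the spins at $v_1,\ldots,v_k$ lies within $\varepsilon$ of $\pi$ in total variation, uniformly in the observed values; the stated lemma is the special case $v=\rho$.

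The base case $k=1$ reduces to iterating the binary channel. Since $\mathbf{M}$ has eigenvalues $1$ and $\theta$ with $|\theta|\leq 1/\sqrt{d}<1$ under the standing assumption $d\theta^2\leq 1$, a spectral decomposition gives $(\mathbf{M}^M)_{ji}=\pi_i+O(|\theta|^M)$ with implicit constant depending only on $\pi$. Stationarity of $\pi$ forces $\mathbf{P}(\sigma_{v_1}=i)=\pi_i$ exactly, so Bayes' rule yields
$$\mathbf{P}(\sigma_v=b\mid\sigma_{v_1}=i)=\pi_b\cdot\frac{(\mathbf{M}^M)_{bi}}{\pi_i}=\pi_b+O(|\theta|^M),$$
which is within $\varepsilon$ of $\pi_b$ once $M$ is large enough (depending only on $\pi$ and $\varepsilon$).

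For the inductive step, assume the strengthened statement for all $k'<k$. Let $v^*$ denote the most recent common ancestor of $v_1,\ldots,v_k$ inside $\mathbb{T}_v$, at depth $\ell^*$ from $v$. If $\ell^*\geq M/2$, the Markov random field property gives the conditional independence $\sigma_v\perp(\sigma_{v_1},\ldots,\sigma_{v_k})\mid\sigma_{v^*}$, hence
$$\mathbf{P}(\sigma_v=b\mid\sigma_{v_j}=i_j,\,1\leq j\leq k)=\sum_{a\in\mathcal{C}}\mathbf{P}(\sigma_v=b\mid\sigma_{v^*}=a)\,\mathbf{P}(\sigma_{v^*}=a\mid\sigma_{v_j}=i_j,\,1\leq j\leq k),$$
and the base case applied to the single ancestor $v^*$ at depth $\ell^*\geq M/2$ forces $|\mathbf{P}(\sigma_v=b\mid\sigma_{v^*}=a)-\pi_b|\leq\varepsilon$ uniformly in $a$, which transfers to the left-hand side. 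If instead $\ell^*<M/2$, the $v_j$'s split into nonempty subsets $V_1,\ldots,V_s$ ($s\geq 2$) lying in distinct subtrees $\mathbb{T}_{u'_m}$ of children $u'_m$ of $v^*$, with $|V_m|<k$; each vertex in $V_m$ is at depth $M-\ell^*-1>M/2-1$ from $u'_m$. The inductive hypothesis applied inside each $\mathbb{T}_{u'_m}$ makes the posterior of $\sigma_{u'_m}$ given the observations in $V_m$ uniformly within some $\varepsilon'$ of $\pi$. Using conditional independence of the subtrees given $\sigma_{v^*}$ together with Bayes' rule, the likelihood ratios $\mathbf{P}(\text{obs.\ in }V_m\mid\sigma_{v^*}=b)/\mathbf{P}(\text{obs.\ in }V_m)$ are all $1+O(\varepsilon')$, so their product over the $s\leq d$ subtrees is $1+O(d\varepsilon')$; this forces the posterior of $\sigma_{v^*}$ to be close to $\pi$, and a final averaging identity as in the first subcase (which is contractive toward $\pi$ regardless of $\ell^*$) propagates the bound back to $\sigma_v$.

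The main obstacle is the second subcase, where the inductive bounds at the roots $u'_m$ of the subtrees must be combined through Bayes' rule without compounding errors. The key quantitative step is to choose $\varepsilon'$ small enough, as a function of $k$, $\pi$, and the minimum entry $\pi_{\min}$, so that the cumulative multiplicative error across at most $d$ subtrees stays below $\varepsilon$; positivity of the entries of $\mathbf{M}$ throughout the regime $|\theta|+|\Delta|<1$ ensures the relevant denominators stay uniformly bounded away from zero in the observed values.
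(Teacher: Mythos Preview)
Your inductive approach is sound in outline and genuinely different from the paper's argument, but there is one slip that, as written, breaks the uniformity in $d$ that the lemma requires. In the second subcase you bound the number of subtrees by $s\leq d$ and conclude a cumulative error $1+O(d\varepsilon')$, then claim $\varepsilon'$ can be chosen as a function of $k,\pi$ alone. These two statements are incompatible: if the only control on $s$ were $s\leq d$, you would be forced to take $\varepsilon'\leq \varepsilon/d$, making $M$ depend on $d$. The correct (and easy) observation is that the nonempty groups $V_1,\ldots,V_s$ partition a $k$-element set, so $s\leq k$; the product of $s\leq k$ factors of size $1+O(\varepsilon')$ is $1+O(k\varepsilon')$, and now $\varepsilon'=\varepsilon'(\pi,k,\varepsilon)$ suffices. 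With this fix, your recursion $M_k \geq 2\max\{M_1(\varepsilon),\,\max_{k'<k}M_{k'}(\varepsilon')+1\}$ terminates with $M_k$ depending only on $\pi,\varepsilon,k$, and the argument goes through.

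For comparison, the paper avoids induction altogether. It computes $(\mathbf{M}^s)_{ij}$ explicitly, fixes a window length $N=N(\pi,\varepsilon,k)$ so that the ratio bound $B(N)^k\leq 1+\varepsilon$ holds, and takes $M>kN$. Since the number $n(\ell)$ of level-$\ell$ ancestors of $\{v_1,\ldots,v_k\}$ is an integer increasing from $1$ to $k$ over more than $kN$ levels, the pigeonhole principle produces an interval $[\ell,\ell+N]$ on which $n(\ell)=n(\ell+N)$, i.e.\ the ancestor tree is a union of disjoint paths there. The likelihood ratio across that window then factors as a product of at most $k$ terms, each controlled by $B(N)$. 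This gives an explicit $M=kN$ with no recursion and makes the independence from $d$ transparent through the uniform bound $|\theta|\leq 2^{-1/2}$. Your approach trades this single pigeonhole step for a structural recursion on the most recent common ancestor; it is conceptually natural and would generalize more readily to non-regular trees, but you must be careful that all constants appearing (here, the number of branches at $v^*$) are bounded in terms of $k$ rather than $d$.
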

\begin{proof}
	Denote the entries of the transition matrix at distance $s$ as
	$$
	U_s=M_{1, 1}^s \quad\textup{and}\quad V_s=M_{2, 2}^s,
	$$
	and it is natural that $M_{1, 2}^s=1-U_s$ and $M_{2, 1}^s=1-V_s$. As
	a result, it follows that
	$$
	\left\{\begin{array}{ll} U_s=M_{11}U_{s-1}+M_{12}(1-V_{s-1})
	\\
	V_s=M_{21}(1-U_{s-1})+M_{22}V_{s-1},
	\end{array}
	\right.
	$$
	which yields a second order recursive formula
	$$
	U_s-(1+\theta)U_{s-1}+\theta U_{s-2}=0
	$$
	with the initial conditions $U_0=1$ and
	$U_1=M_{11}=\pi_1+\pi_2\theta$. Then the general solutions are given by
	\begin{eqnarray*} U_s=\pi_1+\pi_2\theta^s\quad\textup{and}\quad
		V_s=\pi_2+\pi_1\theta^s.
	\end{eqnarray*}
	Consequently, under the condition of $d\theta^2\leq 1$, we have
	$$
	\pi_1-\pi_2d^{-s/2}\leq M_{1, 1}^s\leq \pi_1+\pi_2d^{-s/2},
	$$
	$$
	\pi_2-\pi_1d^{-s/2}\leq M_{2, 2}^s\leq \pi_2+\pi_1d^{-s/2},
	$$
	$$
	\pi_2-\pi_2d^{-s/2}\leq M_{1, 2}^s\leq \pi_2+\pi_2d^{-s/2},
	$$
	$$
	\pi_1-\pi_1d^{-s/2}\leq M_{2, 1}^s\leq \pi_1+\pi_1d^{-s/2}.
	$$
	For fixed $\pi$, $d$ and $k$, define
	$$
	B(s):=\max\left\{\frac{\pi_1+\pi_2d^{-s/2}}{\pi_1-\pi_2d^{-s/2}},
	\frac{\pi_2+\pi_1d^{-s/2}}{\pi_2-\pi_1d^{-s/2}},
	\frac{1+d^{-s/2}}{1-d^{-s/2}}\right\}
	$$
	and let $N=N(\pi, \varepsilon, k)$ be a sufficiently large integer
	such that
	$$
	B^k(N)\leq 1+\varepsilon,
	$$
	where the last inequality holds by the fact that $d^{-s/2}\leq2^{-s/2}\to0$ as $s\to\infty$ which implies
	$B(s)\to1$ uniformly for all $d$.

	Now fix an integer $M$ such that $M>kN$ and choose any
	$v_1,\ldots,v_k\in L(M)$. For $0\leq \ell\leq M$, define $n(\ell)$ as the number of vertices of distance $\ell$ from the root with a
	decedent in the set $\{v_1,\ldots,v_k\}$, that is
	$$
	n(\ell)=\#\left\{v\in L(\ell): \left|\mathbb{T}_v\cap
	\{v_1,\ldots,v_k\}\right|>0\right\}.
	$$
	Then according to the definition, it is trivial to see that
	$n(\ell)$ is an increasing integer valued function with respect to
	$\ell$ from $n_0=1$ to $n_M=k$, which, by the pigeonhole principle,
	implies that there must exist some $\ell$ such that
	$n(\ell)=n(\ell+N)$. Next, denote $\{w_1,\ldots,w_{n(\ell)}\}$ and
	$\{\overline{w}_1,\ldots,\overline{w}_{n(\ell)}\}$ as vertices in
	sets $\{v\in L(\ell+N): |\mathbb{T}_v\cap \{v_1,\ldots,v_k\}|>0\}$
	and $\{v\in L(\ell): |\mathbb{T}_v\cap \{v_1,\ldots,v_k\}|>0\}$
	respectively, such that $w_j$ is the descendent of $\overline{w}_j$,
	and then
	$$
	\mathbf{P}(\sigma_{w_j}=i_2\mid\sigma_{\overline{w}_j}=i_1)=M_{i_1,
		i_2}^N.
	$$
	By Bayes' Rule and the Markov random field property, for any
	$i_1,\ldots,i_{n(\ell)}\in \mathcal{C}$, we have
	\begin{equation*}
	\begin{aligned}
	&\frac{\mathbf{P}(\sigma_\rho=1\mid\sigma_{w_j}=i_j, 1\leq j\leq
		n(\ell))}{\mathbf{P}(\sigma_\rho=2\mid\sigma_{w_j}=i_j, 1\leq j\leq
		n_(\ell))}
	\\
	=&\frac{\pi_1}{\pi_2}\frac{\mathbf{P}(\sigma_{w_j}=i_j, 1\leq j\leq
		n(\ell)\mid\sigma_\rho=1)}{\mathbf{P}(\sigma_{w_j}=i_j, 1\leq j\leq
		n(\ell)\mid\sigma_\rho=2)}
	\\
	=&\frac{\pi_1}{\pi_2}\frac{\sum_{h_1,\ldots,h_{n(\ell)}\in
			\mathcal{C}}\mathbf{P}(\forall j\ \sigma_{w_j}=i_j\mid\forall j\
		\sigma_{\overline{w}_j}=h_j)\mathbf{P}(\forall j\
		\sigma_{\overline{w}_j}=h_j\mid\sigma_\rho=1)}{\sum_{h_1,\ldots,h_{n(\ell)}\in
			\mathcal{C}}\mathbf{P}(\forall j\ \sigma_{w_j}=i_j\mid\forall j\
		\sigma_{\overline{w}_j}=h_j)\mathbf{P}(\forall j\
		\sigma_{\overline{w}_j}=h_j\mid\sigma_\rho=2)}
	\\
	=&\frac{\pi_1}{\pi_2}\frac{\sum_{h_1,\ldots,h_{n(\ell)}\in
			\mathcal{C}}\mathbf{P}(\forall j\
		\sigma_{\overline{w}_j}=h_j\mid\sigma_\rho=1)\prod_{j=1}^{n(\ell)}M_{h_ji_j}^N}{\sum_{h_1,\ldots,h_{n(\ell)}\in
			\mathcal{C}}\mathbf{P}(\forall j\
		\sigma_{\overline{w}_j}=h_j\mid\sigma_\rho=2)\prod_{j=1}^{n(\ell)}M_{h_ji_j}^N}
	\\
	\leq&\frac{\pi_1}{\pi_2}B^{n(\ell)}(N)\frac{\sum_{h_1,\ldots,h_{n(\ell)}\in
			\mathcal{C}}\mathbf{P}(\forall j\
		\sigma_{\overline{w}_j}=h_j\mid\sigma_\rho=1)}{\sum_{h_1,\ldots,h_{n(\ell)}\in
			\mathcal{C}}\mathbf{P}(\forall j\
		\sigma_{\overline{w}_j}=h_j\mid\sigma_\rho=2)}
	\\
	\leq&\frac{\pi_1}{\pi_2}B^{k}(N)
	\\
	\leq&\frac{\pi_1}{\pi_2}(1+\varepsilon),
	\end{aligned}
	\end{equation*}
	which implies that
	$$
	\pi_1-\varepsilon\leq\mathbf{P}(\sigma_\rho=1\mid\sigma_{w_j}=i_j,
	1\leq j\leq n(\ell))\leq\pi_1+\varepsilon.
	$$
	
 Hence, for the reason that $\sigma_\rho$ is conditionally independent of the
	collection $\sigma_{v_1},\ldots,\sigma_{v_k}$ given
	$\sigma_{w_1},\ldots,\sigma_{w_{n(\ell)}}$, one has
	\begin{equation*}
	\begin{aligned}
	\sup_{i_1,\ldots,\
		i_k\in\mathcal{C}}\left|\mathbf{P}(\sigma_\rho=1\mid\sigma_{v_j}=i_j,
	1\leq j\leq k)-\pi_1\right|
	&\leq\sup_{i_1,\ldots,\
		i_{n(\ell)}\in\mathcal{C}}\left|\mathbf{P}(\sigma_\rho=1\mid\sigma_{w_j}=i_j,
	1\leq j\leq n(\ell))-\pi_1\right|
	\\
	&\leq\varepsilon.
	\end{aligned}
	\end{equation*}
\end{proof}

\begin{lemma}
	\label{concentration} Assume $|\theta|>\varrho$ for some
	$\varrho>0$. Given arbitrary $\varepsilon, \alpha> 0$, there exist
	constants $C=C(\pi, \varepsilon, \alpha, \varrho)>0$ and $N=N(\pi,
	\varepsilon, \alpha)$, such that whenever $n\geq N$,
	$$
	\mathbf{P}\left(\left|\frac{\pi_1Z_1}{\pi_1Z_1+\pi_2Z_2}-\pi_1\right|>\varepsilon\right)\leq
	Cx_n^\alpha.
	$$
\end{lemma}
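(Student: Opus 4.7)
The plan is to prove the stronger inductive statement that for every positive integer $k$ and every $\varepsilon>0$, there exist constants $C=C(\pi,\varepsilon,k,\varrho)$ and $N=N(\pi,\varepsilon,k)$ such that $\mathbf{P}(|X^+(n+1)-\pi_1|>\varepsilon)\leq C x_n^k$ whenever $n\geq N$, which in turn implies the lemma for any real $\alpha>0$ by choosing $k\geq\alpha$. Note that the quantity under consideration in the lemma equals $X^+(n+1)$ by the distributional recursion \eqref{recursion}.

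The base case $k=1$ follows from Chebyshev's inequality: by Lemma \ref{lemma1}, $\mathbf{E}(X^+(n+1)-\pi_1)^2=z_{n+1}\leq x_{n+1}$, and the main expansion \eqref{explicit} combined with $d\theta^2\leq 1$ yields $x_{n+1}\leq C_0 x_n$. For the inductive step $k\to k+1$, observe that by Lemma \ref{covariance} each $Y_j(n)$ is distributed as a mixture of $X^+(n)$ (weight $M_{11}$) and $1-X^-(n)$ (weight $M_{12}$). Applying the induction hypothesis at level $n$ (together with the symmetric bound for $X^-(n)-\pi_2$), and using Lemma \ref{ndtf} to pass from $x_{n-1}$ to $x_n$ when $|\theta|>\varrho$, I obtain $\mathbf{P}(|Y_j(n)-\pi_1|>\delta)\leq C' x_n^k$ for any fixed $\delta>0$. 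Since the $Y_j$'s are i.i.d., a pair union bound gives
$$\mathbf{P}\left(\#\{j:|Y_j-\pi_1|>\delta\}\geq 2\right)\leq\binom{d}{2}(C'x_n^k)^2=O\!\left(x_n^{2k}\right)\leq O\!\left(x_n^{k+1}\right).$$

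It therefore remains to bound $\mathbf{P}(|X^+(n+1)-\pi_1|>\varepsilon)$ on the complementary good event $G$ where at most one $Y_j$ is $\delta$-far from $\pi_1$. On $G$ with exactly one exceptional index, Lemma \ref{effectlittle} applied inside the exceptional subtree absorbs its effect into an $\varepsilon/2$ perturbation of the posterior. What remains is to bound the deviation contributed by the good indices with failure probability $O(x_n^{k+1})$, which I expect to be the main obstacle: although each $|Y_j-\pi_1|\leq\delta$, the product $Z_1=\prod_j[1+\frac{\theta}{\pi_1}(Y_j-\pi_1)]$ involves $d$ small factors whose cumulative effect is a priori non-negligible, since $\sum_j\mathbf{E}(Y_j-\pi_1)=d\theta x_n$ is only $O(1)$ under $d\theta^2\leq 1$. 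To close the argument I would Taylor-expand the rational map $(y_1,\ldots,y_d)\mapsto \pi_1 Z_1/(\pi_1 Z_1+\pi_2 Z_2)$ around $(\pi_1,\ldots,\pi_1)$, use the moment controls of Lemma \ref{Taylor} to handle the mean contribution, and apply a Bernstein-type concentration inequality to the sum $\sum_j(Y_j-\pi_1)$ exploiting the variance bound $\mathrm{Var}(Y_j)=O(x_n)$ from Lemma \ref{covariance}, which yields sub-Gaussian tails of order $\exp(-c\varepsilon^2/x_n)$ that dominate any polynomial $x_n^{k+1}$. Combining the contributions on $G$ and its complement closes the induction and therefore establishes the lemma.
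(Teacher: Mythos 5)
There is a genuine gap in two places, and both trace back to the same issue: you try to run the concentration argument directly at level $1$, but a \emph{single} level-$1$ child can have macroscopic influence on the root posterior, and Lemma~\ref{effectlittle} gives you no control there.

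Concretely, your decomposition into ``at most one exceptional $Y_j$'' plus the bad event $\geq 2$ exceptional $Y_j$'s does not close the induction. On the good event with exactly one exceptional index, that one $Y_j$ enters the recursion \eqref{recursion} through a single noisy edge, so it can shift $\pi_1 Z_1/(\pi_1Z_1+\pi_2Z_2)$ by an $O(1)$ amount — Lemma~\ref{effectlittle} says nothing about this, since it only controls the effect of $k$ vertices \emph{at distance $M(\pi,\varepsilon,k)$} from the root, not at distance $1$. But the probability of one exceptional $Y_j$ is only $O(d\,x_n^k)$, so this term contributes $O(x_n^k)$, not the needed $O(x_n^{k+1})$. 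Separately, the Bernstein estimate you invoke for the ``good'' indices is wrong in this regime: each $\log\bigl[1+\tfrac{\theta}{\pi_1}(Y_j-\pi_1)\bigr]$ is bounded by $O(1)$ with variance $O(\theta^2 x_n)$, so the total variance of the log-likelihood sum is $O(d\theta^2 x_n)=O(x_n)$; for a fixed deviation threshold $\varepsilon$ you are in the Poisson (sub-exponential) part of Bernstein, which yields only a constant bound of order $e^{-c\varepsilon}$, not the sub-Gaussian $e^{-c\varepsilon^2/x_n}$ you claim. Even a moment-method bound stalls at $O(x_n)$ because of the single-large-summand contribution.

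The paper fixes exactly this by passing from level $1$ to a fixed level $M=M(\pi,\varepsilon,k)$ chosen via Lemma~\ref{effectlittle}: it considers $W(v)=f_{n+1-M}(1,\sigma_v^1(n+1))$ for $v\in L(M)$, so the posterior $H$ is a continuous function on the \emph{fixed} finite-dimensional cube $[0,1]^{|L(M)|}$, and Lemma~\ref{effectlittle} guarantees that any $\leq k$ coordinates at level $M$ taking arbitrary values shift $H$ by at most $\varepsilon/2$; uniform continuity on the compact cube then provides the $\delta$ that handles all the other coordinates being $\delta$-close to $\pi_1$. With $\mathbf{P}(|W(v)-\pi_1|>\delta)\leq C\,x_{n+1-M}$ and conditional independence given $\sigma(M)$, the bad event of more than $k>\alpha$ exceptional $W(v)$'s is a binomial tail $O(x_{n+1-M}^{\,k+1})$, which Lemma~\ref{ndtf} converts to $O(x_n^\alpha)$. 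There is no induction on $k$ and no Bernstein step; the whole argument is carried by the choice of $M$ in Lemma~\ref{effectlittle}.
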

\begin{proof}
	Fix $k$ an integer with $k>\alpha$. 
	Choose $M$ to hold with bound $\varepsilon/2$ in Lemma \ref{effectlittle}. Let
	$v_1,\ldots,v_{|L(M)|}$ denote the vertices in $L(M)$ and define
	$$
	W(v)=f_{n+1-M}(1, \sigma_v^1(n+1)),
	$$
	where $\sigma_v^1(n+1)$ denotes the spins of vertices in
	$\mathbb{T}_v\cap L(n+1)$. Then $W(v)$ would be distributed as
	\begin{eqnarray}
	\label{distribution} W(v)\sim \left\{\begin{array}{ll} X^+(n+1-M)&
	\quad \textup{if}\ \sigma_v^1=1,
	\\
	\
	\\
	1-X^-(n+1-M)& \quad\textup{if}\ \sigma_v^1=2.
	\end{array}
	\right.
	\end{eqnarray}
	The recursion formula in equation \eqref{recursion} together with the fact that
	$1-W(v)=f_{n+1-M}(2, \sigma_v^1(n+1))$, yield a function
	$$
	H(W_1,\ldots,W_{|L(M)|})=f_{n+1}(1,
	\sigma^1(n+1))=\frac{\pi_1Z_1}{\pi_1Z_1+\pi_2Z_2},
	$$
	where $W_i=W(v_i)$ for $1\leq i\leq |L(M)|$. There is no difficulty
	in finding that when all the entries $W_i$ are identically $\pi_1$ one has
	$$H(W_1,\ldots,W_{|L(M)|})=\pi_1,$$ and $H$ is a continuous
	function of the vector $(W_i)_{1\leq i\leq |L(M)|}$. Therefore, by
	Lemma \ref{effectlittle}, if there are at most $k$ vertices in $L(M)$
	such that $W(v)\neq \pi_1$,  then
	$$
	|H(W_1,\ldots,W_{|L(M)|})-\pi_1|<\frac\varepsilon2,
	$$
	and there exists some $\delta=\delta(\pi, \varepsilon)>0$ such
	that if
	$$
	\#\left\{v\in L(M): |W(v)-\pi_1|>\delta\right\}\leq k
	$$
	then
	$$
	|H(W_1,\ldots,W_{|L(M)|})-\pi_1|<\varepsilon.
	$$
	Next, by the Chebyshev's inequality together
	with equation \eqref{distribution}, the following result holds:
	\begin{eqnarray*}
		\mathbf{P}(|W(v)-\pi_1|>\delta)&\leq&
		\delta^{-2}[\mathbf{E}(X^+(n+1-M)-\pi_1)^2+\mathbf{E}(X^-(n+1-M)-\pi_2)^2]
		\\
		&\leq&\frac{\delta^{-2}}{\min\left\{1,
			\frac{\pi_2}{\pi_1}\right\}}x_{n+1-M}.
	\end{eqnarray*}
	Random variables $|W(v)-\pi_1|$ for distinct $v$ are
	conditionally independent given $\sigma(M)$, so there exist suitable
	constants $C(\pi, \varepsilon, \alpha, \varrho)$ and $N(\pi,
	\varepsilon, \alpha)$, such that when $n\geq N$, one has
	\begin{equation*}
	\begin{aligned}
	\mathbf{P}\left(\left|\frac{\pi_1Z_1}{\pi_1Z_1+\pi_2Z_2}-\pi_1\right|>\varepsilon\right)
	&\leq\mathbf{P}(\#\left\{v\in L(M): |W(v)-\pi_1|>\delta\right\}> k)
	\\
	&=\sum_A\mathbf{P}(\#\left\{v\in L(M): |W(v)-\pi_1|>\delta\right\}>
	k\mid\sigma(M)=A)\mathbf{P}(\sigma(M)=A)
	\\
	&\leq\sum_A\mathbf{P}\left[\mathbf{B}\left(|L(M)|,
	\frac{\delta^{-2}}{\min\left\{1,
		\frac{\pi_2}{\pi_1}\right\}}x_{n+1-M}\right)>k\right]\mathbf{P}(\sigma(M)=A)
	\\
	&\leq C'(\pi, \varepsilon, \alpha, \varrho)x_{n+1-M}^\alpha
	\\
	&\leq Cx_n^\alpha,
	\end{aligned}
	\end{equation*}
	where $\mathbf{B}\left(\cdot, \cdot\right)$ denotes the binomial
	distribution and the last inequality holds due to Lemma \ref{ndtf}.
\end{proof}

Now, we are able to bound $S$ and $R$ in equation \eqref{sly2009reconstruction} using the preceding concentration results.
\begin{proposition}
	\label{estimateforS} Assume $|\theta|>\varrho$ for some $\varrho>0$.
	For any $\varepsilon>0$, there exist $N=N(\pi, \varepsilon)$ and
	$\delta=\delta(\pi, \varepsilon, \varrho)>0$, such that if $n\geq N$
	and $x_n\leq\delta$ then $|S|\leq\varepsilon x_n^2$.
\end{proposition}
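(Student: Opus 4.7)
The plan is to bound
\[
S=\mathbf{E}\left[(\pi_1Z_1+\pi_2Z_2-1)^2\left(\frac{\pi_1Z_1}{\pi_1Z_1+\pi_2Z_2}-\pi_1\right)\right]
\]
by splitting the expectation according to whether the factor $\phi:=\pi_1Z_1/(\pi_1Z_1+\pi_2Z_2)$ is close to $\pi_1$ or not. Since $\phi\in[0,1]$ one has $|\phi-\pi_1|\leq 1$ pointwise. Two previously established ingredients do the heavy lifting. First, equation~\eqref{square} together with Lemma~\ref{Taylor} (and the standing $d\theta^2\leq 1$) provides a constant $C_0=C_0(\pi)$ such that $\mathbf{E}(\pi_1Z_1+\pi_2Z_2-1)^2\leq C_0 x_n^2$ whenever $x_n$ is below some $\pi$-dependent threshold (absorbing the $O_\pi(x_n^3)$ remainder). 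Second, Lemma~\ref{concentration}, which requires the standing assumption $|\theta|>\varrho$, supplies, for every $\alpha>0$ and $\eta>0$, a constant $C_{\eta,\alpha}(\pi,\varrho)$ and an integer $N_{\eta,\alpha}(\pi)$ with $\mathbf{P}(|\phi-\pi_1|>\eta)\leq C_{\eta,\alpha}\,x_n^\alpha$ for all $n\geq N_{\eta,\alpha}$.

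Given $\varepsilon>0$, I would fix $\eta:=\varepsilon/(2C_0)$ and decompose $|S|\leq S_1+S_2$ with
\[
\begin{aligned}
S_1 &= \mathbf{E}\!\left[(\pi_1Z_1+\pi_2Z_2-1)^2|\phi-\pi_1|\,\mathbf{1}_{\{|\phi-\pi_1|\leq\eta\}}\right], \\
S_2 &= \mathbf{E}\!\left[(\pi_1Z_1+\pi_2Z_2-1)^2|\phi-\pi_1|\,\mathbf{1}_{\{|\phi-\pi_1|>\eta\}}\right].
\end{aligned}
\]
On the ``close'' event the factor $|\phi-\pi_1|$ is at most $\eta$, so $S_1\leq \eta\,\mathbf{E}(\pi_1Z_1+\pi_2Z_2-1)^2\leq \eta\,C_0\,x_n^2=\varepsilon x_n^2/2$ once $x_n$ is below the $\pi$-dependent threshold mentioned above. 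For $S_2$ I would bound $|\phi-\pi_1|\leq 1$ and apply the Cauchy--Schwarz inequality:
\[
S_2\leq\bigl(\mathbf{E}(\pi_1Z_1+\pi_2Z_2-1)^4\bigr)^{1/2}\bigl(\mathbf{P}(|\phi-\pi_1|>\eta)\bigr)^{1/2}.
\]
Expanding the fourth power as a finite linear combination of cross moments $\mathbf{E}Z_1^{\ell}Z_2^{m}$ with $\ell+m\leq 4$ and invoking Lemma~\ref{Taylor} bounds the first factor by some constant $K=K(\pi)$. To beat the target $x_n^2$ rate, I would apply Lemma~\ref{concentration} with any exponent $\alpha>4$, for instance $\alpha=6$, yielding $S_2\leq K^{1/2}\,C_{\eta,6}^{1/2}\,x_n^3\leq \varepsilon x_n^2/2$ as soon as $x_n\leq \delta$ for some $\delta=\delta(\pi,\varepsilon,\varrho)>0$. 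Setting $N:=N_{\eta,6}$, which depends only on $\pi$ and $\varepsilon$, and summing the two estimates gives $|S|\leq\varepsilon x_n^2$.

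The only genuinely delicate point is the choice of $\alpha$: the loss incurred by replacing $|\phi-\pi_1|$ by $1$ on the bad event must be beaten by the tail estimate, which forces $\alpha/2>2$. Since Lemma~\ref{concentration} permits arbitrarily large $\alpha$, this is not a fundamental obstacle but rather a bookkeeping matter, together with verifying that the promised dependencies $N=N(\pi,\varepsilon)$ and $\delta=\delta(\pi,\varepsilon,\varrho)$ emerge correctly---which they do, because the fixed choice $\alpha=6$ decouples $N$ from $\varrho$, while the $\varrho$-dependence of $C_{\eta,6}$ is absorbed into the smallness condition on $x_n$.
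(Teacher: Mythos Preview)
Your proposal is correct and follows essentially the same approach as the paper: split $|S|$ according to whether $|\phi-\pi_1|\leq\eta$ or not, bound the near part by $\eta\,\mathbf{E}(\pi_1Z_1+\pi_2Z_2-1)^2\leq \eta C_0 x_n^2$, and handle the far part via Cauchy--Schwarz combined with Lemma~\ref{Taylor} (for the fourth moment) and Lemma~\ref{concentration} with $\alpha=6$, then choose $\eta=\varepsilon/(2C_0)$. The dependency tracking is also identical to the paper's.
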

\begin{proof}
	For any $\eta>0$, using the Cauchy-Schwartz inequality and by 
	Lemma \ref{concentration}, one has
	\begin{eqnarray*}
		|S|&=&\left|\mathbf{E}(\pi_1Z_1+\pi_2Z_2-1)^2\left(\frac{\pi_1Z_1}{\pi_1Z_1+\pi_2Z_2}-\pi_1\right)\right|
		\\
		&\leq&\mathbf{E}\left((\pi_1Z_1+\pi_2Z_2-1)^2\left|\frac{\pi_1Z_1}{\pi_1Z_1+\pi_2Z_2}-\pi_1\right|;\left|\frac{\pi_1Z_1}{\pi_1Z_1+\pi_2Z_2}-\pi_1\right|\leq
		\eta\right)
		\\
		&&+\mathbf{E}\left((\pi_1Z_1+\pi_2Z_2-1)^2\left|\frac{\pi_1Z_1}{\pi_1Z_1+\pi_2Z_2}-\pi_1\right|;\left|\frac{\pi_1Z_1}{\pi_1Z_1+\pi_2Z_2}-\pi_1\right|>
		\eta\right)
		\\
		&\leq&\eta
		\mathbf{E}(\pi_1Z_1+\pi_2Z_2-1)^2+\mathbf{E}(\pi_1Z_1+\pi_2Z_2-1)^2\mathbf{I}\left(\left|\frac{\pi_1Z_1}{\pi_1Z_1+\pi_2Z_2}-\pi_1\right|>
		\eta\right)
		\\
		&\leq&\eta
		\mathbf{E}(\pi_1Z_1+\pi_2Z_2-1)^2+\mathbf{P}\left(\left|\frac{\pi_1Z_1}{\pi_1Z_1+\pi_2Z_2}-\pi_1\right|>
		\eta\right)^{1/2}\left[\mathbf{E}(\pi_1Z_1+\pi_2Z_2-1)^4\right]^{1/2}.
	\end{eqnarray*}
	Also, it follows from equation \eqref{square} and Lemma \ref{Taylor}
	respectively that
	$$
	\mathbf{E}(\pi_1Z_1+\pi_2Z_2-1)^2\leq C_1(\pi)x_n^2
	$$
	and
	$$
	(\mathbf{E}(\pi_1Z_1+\pi_2Z_2-1)^4)^{1/2}\leq C_2(\pi).
	$$
	Taking $\alpha=6$ in Lemma \ref{concentration}, there exist
	$C_3=C_3(\pi, \eta, \varrho)$ and $N=N(\pi, \eta)$, such that if
	$n\geq N$ then
	$$
	\mathbf{P}\left(\left|\frac{\pi_1Z_1}{\pi_1Z_1+\pi_2Z_2}-\pi_1\right|>\eta\right)\leq
	C_3^2x_n^6.
	$$
	Finally taking $\eta=\varepsilon/(2C_1)$ and
	$\delta=\varepsilon/(2C_2C_3)$, we have that if $n\geq N$ and
	$x_n\leq\delta$ then
	\begin{eqnarray*}
		|S|\leq\eta C_1x_n^2+C_2C_3x_n^3\leq\varepsilon x_n^2.
	\end{eqnarray*}
\end{proof}

\begin{proposition}
	\label{concentrationforz} Assume $|\theta|>\varrho$ for some
	$\varrho>0$. For any $\varepsilon>0$, there exist $N=N(\pi,
	\varepsilon)$ and $\delta=\delta(\pi, \varepsilon, \varrho)$, such
	that if $n\geq N$ and $x_n\leq\delta$ then
	$$
	\left|\frac{z_n}{x_n}-\pi_1\right|\leq \varepsilon.
	$$
\end{proposition}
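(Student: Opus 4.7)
The plan is to combine a simple change-of-measure identity with a polynomial concentration bound on $X_1(n)$ around $\pi_1$. Mimicking the computation of $\mathbf{E}X^+$ in the proof of Lemma \ref{lemma1}, for any bounded measurable $h$ one has $\mathbf{E}[h(X^+(n))] = \pi_1^{-1}\mathbf{E}[X_1(n)\, h(X_1(n))]$. Taking $h(t) = (t-\pi_1)^2$, expanding $X_1 = (X_1-\pi_1) + \pi_1$, and using $\mathbf{E}(X_1-\pi_1)^2 = \pi_1 x_n$ from Lemma \ref{lemma1} yields
\[
z_n \;=\; \pi_1 x_n \;+\; \frac{1}{\pi_1}\,\mathbf{E}(X_1(n)-\pi_1)^3, \qquad \text{so}\qquad \frac{z_n}{x_n} - \pi_1 \;=\; \frac{\mathbf{E}(X_1-\pi_1)^3}{\pi_1 x_n}.
\]
It therefore suffices to show $|\mathbf{E}(X_1-\pi_1)^3| \leq \varepsilon\pi_1 x_n$ once $n$ is large and $x_n$ is small.

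The crucial technical step is to prove that, for any $\eta>0$ and any $\alpha>0$, there exist $C = C(\pi,\eta,\alpha,\varrho)$ and $N = N(\pi,\eta,\alpha)$ such that
\[
\mathbf{P}(|X_1(n) - \pi_1| > \eta) \;\leq\; C x_n^\alpha, \qquad n \geq N.
\]
This is the analog of Lemma \ref{concentration} for $X_1(n)$ rather than $X^+(n+1)$, and its proof transports almost verbatim. Pick $M = M(\pi,\eta)$ via Lemma \ref{effectlittle} (with tolerance $\eta/2$) and an integer $k > \alpha$; note that $X_1(n)$ is a continuous function of the level-$M$ posteriors $W(v) = f_{n-M}(1,\sigma_v(n))$ and equals $\pi_1$ when every $W(v) = \pi_1$. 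By Lemma \ref{effectlittle} one finds $\delta' = \delta'(\pi,\eta)>0$ such that $\{|X_1-\pi_1|>\eta\} \subseteq \{\#\{v \in L(M): |W(v)-\pi_1|>\delta'\} > k\}$. Writing $X_1$ as a mixture of $X^+(n-M)$ and $1-X^-(n-M)$, Chebyshev bounds each $\mathbf{P}(|W(v)-\pi_1|>\delta')$ by a $\pi$-dependent multiple of $x_{n-M}$; a binomial tail estimate together with Lemma \ref{ndtf} (which converts $x_{n-M}$ into a constant times $x_n$ via $x_{n-M} \leq \gamma^{-M} x_n$) completes this step.

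With the concentration bound in hand, split over $A = \{|X_1-\pi_1| \leq \eta\}$: on $A$, $|X_1-\pi_1|^3 \leq \eta(X_1-\pi_1)^2$, and on $A^c$, $|X_1-\pi_1|^3 \leq 1$. Using $\alpha = 2$,
\[
|\mathbf{E}(X_1-\pi_1)^3| \;\leq\; \eta\,\mathbf{E}(X_1-\pi_1)^2 + \mathbf{P}(A^c) \;\leq\; \eta\pi_1 x_n + C x_n^2,
\]
so dividing by $\pi_1 x_n$ gives $|z_n/x_n - \pi_1| \leq \eta/\pi_1 + Cx_n/\pi_1^2$. Choosing $\eta = \pi_1\varepsilon/2$ and then $\delta$ so small that $C\delta/\pi_1^2 \leq \varepsilon/2$ delivers the claim for $n \geq N$ and $x_n \leq \delta$.

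The main obstacle is the polynomial concentration of $X_1$ in the middle paragraph. It is not quite the statement of Lemma \ref{concentration} (which handles $X^+(n+1) = \pi_1 Z_1/(\pi_1 Z_1 + \pi_2 Z_2)$), but the machinery—Lemma \ref{effectlittle} for the "within $\eta/2$ unless many level-$M$ children are bad" step, Chebyshev plus a binomial tail bound, and Lemma \ref{ndtf} to re-index from $n-M$ to $n$—is exactly what the proof of Lemma \ref{concentration} supplies. Once that step is established, the remaining arguments are elementary bookkeeping.
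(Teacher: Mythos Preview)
Your argument is correct, and the route is genuinely different from the paper's. The paper never writes down your third–moment identity; instead it derives a second recursion
\[
z_{n+1}=d\theta^2\bigl[\pi_1(1-\theta)x_n+\theta z_n\bigr]+O_\pi(x_n^2),
\]
combines it with the recursion $x_{n+1}=d\theta^2 x_n+O_\pi(x_n^2)$ to get an approximate contraction
$\bigl|\tfrac{z_{n+1}}{x_{n+1}}-[\pi_1(1-\theta)+\theta\tfrac{z_n}{x_n}]\bigr|\le C x_n$, and then iterates $k$ times so that the $\theta^k$–geometric rate forces $z_n/x_n$ toward the fixed point $\pi_1$. That proof never invokes the concentration Lemma~\ref{concentration}; it lives entirely inside the moment–recursion framework already built for $x_{n+1}$.

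Your approach trades that iteration for the clean identity $z_n-\pi_1 x_n=\pi_1^{-1}\mathbf{E}(X_1-\pi_1)^3$, which explains immediately why $\pi_1$ is the target, and then spends its effort on transporting Lemma~\ref{concentration} from $X^+(n+1)$ to $X_1(n)$. That transport is legitimate: the function $H$ and the use of Lemma~\ref{effectlittle} are insensitive to whether the leaf configuration is drawn conditionally on $\sigma_\rho=1$ or from the stationary chain, and the Chebyshev/binomial/Lemma~\ref{ndtf} steps go through unchanged with $W(v)$ a mixture of $X^+(n-M)$ and $1-X^-(n-M)$. One small arithmetic slip: dividing $\eta\pi_1 x_n+Cx_n^2$ by $\pi_1 x_n$ gives $\eta+Cx_n/\pi_1$, not $\eta/\pi_1+Cx_n/\pi_1^2$; with $\eta=\varepsilon/2$ and $\delta$ small this still yields the claim, so the conclusion is unaffected. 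In short, your proof is shorter and more conceptual once the $X_1$–concentration is in hand, while the paper's recursive proof is more self–contained because it reuses only the Taylor estimates already computed in Section~3.
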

\begin{proof}
	Plugging $a=(Z_1-Z_2)^2$, $r=(\pi_1Z_1+\pi_2Z_2)^2-1$ and $s=1$ in the
	identity equation \eqref{identity}, we have
	\begin{eqnarray*}
		z_{n+1}&=&\mathbf{E}\left(\frac{\pi_1Z_1}{\pi_1Z_1+\pi_2Z_2}-\pi_1\right)^2
		\\
		&=&\pi_1^2\pi_2^2\mathbf{E}\frac{(Z_1-Z_2)^2}{1+(\pi_1Z_1+\pi_2Z_2)^2-1}
		\\
		&=&\pi_1^2\pi_2^2\left\{\mathbf{E}(Z_1-Z_2)^2-\mathbf{E}(Z_1-Z_2)^2[(\pi_1Z_1+\pi_2Z_2)^2-1]+\mathbf{E}[(\pi_1Z_1+\pi_2Z_2)^2-1]^2\frac{(Z_1-Z_2)^2}{(\pi_1Z_1+\pi_2Z_2)^2}\right\}.
	\end{eqnarray*}
	Next we will estimate these expectation terms one by one with the $O_\pi$-constants depend only on $\pi$:
	\begin{equation*}
	\begin{split}
		\mathbf{E}(Z_1-Z_2)^2
		=&\mathbf{E}(Z_1^2+Z_2^2-2Z_1Z_2)
		\\
		=&1+d\left[\frac{\theta^2}{\pi_1}(3-\theta)x_n+\frac{\theta^3}{\pi_1^2}z_n\right]
		+1+d\left\{\frac{\theta^2}{\pi_2}\left[\frac{\pi_1}{\pi_2}(1-\theta)-2\right]x_n+\frac{\theta^3}{\pi_2^2}z_n\right\}
		\\
		&-2\left\{1+d\left[\theta^2\left(\frac{1}{\pi_1}+\frac{\theta-2}{\pi_2}\right)x_n-\frac{\theta^3}{\pi_1\pi_2}z_n\right]\right\}+O_\pi(x_n^2)
		\\
		=&d\theta^2\left(\frac{1-\theta}{\pi_1\pi_2^2}x_n+\frac{\theta}{\pi_1^2\pi_2^2}
		z_n\right)+O_\pi(x_n^2),
			\end{split}
		\end{equation*}
			\begin{equation*}
		\begin{split}
		&\mathbf{E}(Z_1-Z_2)^2[(\pi_1Z_1+\pi_2Z_2)^2-1]=O_\pi(x_n^2),\\
        &\pi_1^2\pi_2^2\mathbf{E}[(\pi_1Z_1+\pi_2Z_2)^2-1]^2\frac{(Z_1-Z_2)^2}{(\pi_1Z_1+\pi_2Z_2)^2}
		\leq \mathbf{E}[(\pi_1Z_1+\pi_2Z_2)^2-1]^2=O_\pi(x_n^2),
	\end{split}
	\end{equation*}
where we used the fact that $
\pi_1^2\pi_2^2(Z_1-Z_2)^2/(\pi_1Z_1+\pi_2Z_2)^2\leq 1$ in the last inequality.  
	Therefore, the recursion formula of $z_{n+1}$ can be written as
	$$
	z_{n+1}=d\theta^2[\pi_1(1-\theta)x_n+\theta z_n]+O_\pi(x_n^2).
	$$
In the rest of the proof, we let $\{C_i\}_{i=1,2,3,4}$ be constants depend only on $\pi$. It follows from equation \eqref{lemLA} that
	$$
	\left|\frac{d\theta^2x_n}{x_{n+1}}-1\right|\leq C_1
	\frac{x_n^2}{x_{n+1}},
	$$ and in view of $d\theta^2\geq1/2$,
	there exists $\delta_1=\delta_1(\pi)>0$, such that if
	$x_n\leq\delta_1$ then
	\begin{eqnarray}
	\label{5.3}
	\frac{x_n}{x_{n+1}}&=&\frac{x_n}{d\theta^2x_n+O_\pi(x_n^2)}
	\leq\frac{x_n}{9d\theta^2x_n/10}
	=\frac{10}{9}\frac{1}{d\theta^2}\leq\frac{20}{9}.
	\end{eqnarray}
	Consequently,
	\begin{equation}
	\label{iteration}
	\begin{aligned}
	\left|\frac{z_{n+1}}{x_{n+1}}-\left[\pi_1(1-\theta)+\theta\frac{z_n}{x_n}\right]\right|
	&=\left|\frac{z_{n+1}}{x_{n+1}}-\frac{d\theta^2x_n}{x_{n+1}}\left[\pi_1(1-\theta)+\theta
	\frac{z_n}{x_n}\right]\right|
	+\left|\left(\frac{d\theta^2x_n}{x_{n+1}}-1\right)\left[\pi_1(1-\theta)+\theta
	\frac{z_n}{x_n}\right]\right|
	\\
	&\leq C_2\frac{x_n^2}{x_{n+1}}+C_3\frac{x_n^2}{x_{n+1}}
	\\
	&\leq C_4x_n.
	\end{aligned}
	\end{equation}
For any
	$k\in\mathbb{N}$, by equation \eqref{lemLA}, there exists a
	$\delta_2=\delta_2(\pi, k)$, such that if $x_n\leq\delta_2$ then
	$x_{n+\ell}\leq2\delta_2\leq\delta_1$ for any $1\leq\ell\leq k$. Now
	iterating $k$ times the inequality \eqref{iteration} yields
	\begin{equation*}
	\begin{aligned}
	&\left|\frac{z_{n+k}}{x_{n+k}}-\left[\pi_1(1-\theta^k)+\theta^k\frac{z_n}{x_n}\right]\right|
	\\
	&\leq\sum_{\ell=1}^k\left|\pi_1(1-\theta^{k-\ell})+\theta^{k-\ell}\frac{z_{n+\ell}}{x_{n+\ell}}-\pi_1(1-\theta^{k-\ell+1})-\theta^{k-\ell+1}\frac{z_{n+\ell-1}}{x_{n+\ell-1}}\right|
	\\
	&\leq\sum_{\ell=1}^k|\theta|^{k-\ell}\left|\frac{z_{n+\ell}}{x_{n+\ell}}-\left[\pi_1(1-\theta)+\theta\frac{z_{n+\ell-1}}{x_{n+\ell-1}}\right]\right|
	\\
	&\leq C_4\sum_{\ell=1}^k|\theta|^{k-\ell}x_{n+\ell-1}.
	\end{aligned}
	\end{equation*}
	Therefore, noting that $|\theta|\leq d^{-1/2}\leq 2^{-1/2}$, and taking $k=k(\varepsilon)$ large enough and
	$\delta_3=\delta_3(\pi, \varepsilon, k)=\delta_3(\pi, \varepsilon)$
	sufficiently small, we obtain that if $x_n\leq\delta_3$ then
	\begin{equation}
	\label{concentrationk}
	\left|\frac{z_{n+k}}{x_{n+k}}-\pi_1\right|\leq|\theta|^k+2\delta_3
	C_4\sum_{\ell=1}^k|\theta|^{k-\ell}=|\theta|^k+2\delta_3
	C_4\frac{1-|\theta|^k}{1-|\theta|}\leq\varepsilon,
	\end{equation}
	where the first inequality relies on the fact that $|z_n/x_n-\pi_1|<1$. At last, choosing $N=N(\pi, \varepsilon)>k$ and $\delta=\delta(\pi,
	\varepsilon, \varrho)=\gamma^k\delta_3$, and noting that by
	Lemma \ref{ndtf} if $n\geq N$ and $x_n\leq\delta$ then
	$x_{n-k}\leq\gamma^{-k}x_n\leq\delta_3$, the previous
	result in equation \eqref{concentrationk} completes the proof.
\end{proof}

\section {Proof of Theorem \ref{reconstruction}}
\label{Proof_of_Theorem_1.1}
To accomplish the proof, it suffices to show that when $d\theta^2$
is close enough to $1$, $x_n$ does not converge to $0$. For convenience, we suppose that $d\theta^2\geq1/2$. For any
fixed $d$ and $\pi$, there is
$|\theta|\geq(2d)^{-1/2}$, and we take $\varrho=(2d)^{-1/2}$ in
Lemma \ref{ndtf} to generate $\gamma=\gamma(\pi, d)>0$. When $1-6\pi_1\pi_2>0$, by
Proposition~\ref{estimateforS} and Proposition \ref{concentrationforz},
there exist $N=N(\pi)$ and $\delta=\delta(\pi, d)>0$, such that if
$n\geq N$ and $x_n\leq\delta$, then the remainders
in equation \eqref{explicit} could be evaluated respectively as
\begin{equation}
\label{6.1}
|R|\leq\frac{1}{6}\frac{1-6\pi_1\pi_2}{\pi_1\pi_2^2}\frac{d(d-1)}{2}\theta^4x_n^2
\end{equation}
and
\begin{equation}
\label{6.2}
|S|\leq\frac{1}{6}\frac{1-6\pi_1\pi_2}{\pi_1\pi_2^2}\frac{d(d-1)}{2}\theta^4x_n^2.
\end{equation}
As a consequence, 
\begin{equation}
\label{6.4} x_{n+1}\geq
d\theta^2x_n+\frac{1}{2}\frac{(1-6\pi_1\pi_2)}{\pi_1\pi_2^2}\frac{d(d-1)}{2}\theta^4x_n^2.
\end{equation}
Furthermore, in light of $x_0=1-\pi_1=\pi_2$ and Lemma \ref{ndtf},
for all $n$ we have
\begin{equation}
\label{6.5} x_n\geq \pi_2\gamma^n.
\end{equation}
Define $\varepsilon=\varepsilon(\pi, d)=\min\{\pi_2\gamma^{N},
\delta\gamma\}>0$. Then equation \eqref{6.5} implies that $x_n\geq
\varepsilon$ when $n\leq N$. Next, by choosing suitable
$|\theta|<d^{-1/2}$, we achieve
\begin{eqnarray}
\label{6.6}
d\theta^2+\frac{1}{2}\frac{(1-6\pi_1\pi_2)}{\pi_1\pi_2^2}\frac{d(d-1)}{2}\theta^4\varepsilon\geq1,
\end{eqnarray}
for the reason that $\varepsilon$ is independent of $\theta$. Now, suppose
$x_n\geq\varepsilon$ for some $n\geq N$. If $x_n\geq
\gamma^{-1}\varepsilon$, then Lemma \ref{ndtf} gives $x_{n+1}\geq
\gamma x_n\geq \varepsilon$. If $\varepsilon\leq x_n\leq
\gamma^{-1}\varepsilon\leq\delta$, then by equation \eqref{6.4}
and equation \eqref{6.6}, we have
\begin{equation*}
	x_{n+1}\geq
	d\theta^2x_n+\frac{1}{2}\frac{(1-6\pi_1\pi_2)}{\pi_1\pi_2^2}\frac{d(d-1)}{2}\theta^4x_n^2
	\geq x_n\left[d\theta^2+\frac{1}{2}\frac{(1-6\pi_1\pi_2)}{\pi_1\pi_2^2}\frac{d(d-1)}{2}\theta^4\varepsilon\right]
	\geq x_n
	\geq\varepsilon.
\end{equation*}
Hence it can be shown by induction that $x_n\geq \varepsilon$ for all $n$, namely, the Kesten-Stigum bound is not tight.

\section{Large Degree Asymptotics}
\subsection{Gaussian Approximation}
\indent For $1\leq j \leq d$, define
$$
U_j=\log\left[1+\frac{\theta}{\pi_1}(Y_j-\pi_1)\right]\quad\textup{and}\quad
V_j=\log\left[1-\frac{\theta}{\pi_2}(Y_j-\pi_1)\right].
$$
\begin{lemma}
	\label{meansandvariances} There exist positive constants $C=C(\pi)$
	and $D=D(\pi)$, such that when $d>D$,
	$$
	\left|d\mathbf{E}U_j-\frac{d\theta^2}{2\pi_1}x_n\right|\leq
	Cd^{-1/2},\quad
	\left|d\mathbf{E}V_j+\frac{1+\pi_2}{2\pi_2^2}d\theta^2x_n\right|\leq
	Cd^{-1/2},
	$$
	$$
	\left|d\mathbf{Var}(U_j)-\frac{d\theta^2}{\pi_1}x_n\right|\leq
	Cd^{-1/2}, \quad
	\left|d\mathbf{Var}(V_j)-\frac{\pi_1}{\pi_2^2}d\theta^2x_n\right|\leq
	Cd^{-1/2},
	$$
	$$
	\left|d\mathbf{Cov}(U_j,
	V_j)+\frac{d\theta^2}{\pi_2}x_n\right|\leq Cd^{-1/2}.
	$$
\end{lemma}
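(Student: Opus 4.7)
The plan is to Taylor-expand the logarithm and then carry out the expectations term by term, using Lemma \ref{covariance} to compute the first two moments of $Y_j-\pi_1$. Write $a_j=\frac{\theta}{\pi_1}(Y_j-\pi_1)$ and $b_j=-\frac{\theta}{\pi_2}(Y_j-\pi_1)$, so $U_j=\log(1+a_j)$ and $V_j=\log(1+b_j)$. Since $|Y_j-\pi_1|\leq 1$ and $|\theta|\leq d^{-1/2}$, we have $|a_j|\vee|b_j|\leq C(\pi)d^{-1/2}$, which is less than $1/2$ once $d>D(\pi)$ is large enough. On this range the identity
$$\log(1+w)=w-\tfrac12 w^2+r(w),\qquad |r(w)|\leq C|w|^3$$
holds, and the same kind of estimate gives $U_jV_j=a_jb_j+R_{UV}$ with $|R_{UV}|\leq C(|a_j|^3+|b_j|^3)$.

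First I would compute $d\mathbf{E}U_j$. Applying Lemma~\ref{covariance} gives
$$\mathbf{E}a_j=\tfrac{\theta^2}{\pi_1}x_n,\qquad \mathbf{E}a_j^2=\tfrac{\theta^2}{\pi_1^2}\bigl[\pi_1 x_n+\theta(z_n-\pi_1 x_n)\bigr],$$
and since $|Y_j-\pi_1|^k\leq (Y_j-\pi_1)^2\leq 1$ for $k\geq 2$ together with $x_n,z_n\in[0,1]$, the cubic remainder is $O(|\theta|^3)$. Multiplying by $d$ and using $d|\theta|^3\leq d^{-1/2}$ (from $d\theta^2\leq 1$), the cross terms combine to leave $d\mathbf{E}U_j=\frac{d\theta^2}{2\pi_1}x_n+O_{\pi}(d^{-1/2})$. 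The same Taylor calculation for $V_j$ produces
$$d\mathbf{E}V_j=-\tfrac{d\theta^2}{\pi_2}x_n-\tfrac{\pi_1 d\theta^2}{2\pi_2^2}x_n+O_{\pi}(d^{-1/2})=-\tfrac{1+\pi_2}{2\pi_2^2}d\theta^2 x_n+O_{\pi}(d^{-1/2}),$$
using the algebraic identity $\frac{1}{\pi_2}+\frac{\pi_1}{2\pi_2^2}=\frac{2\pi_2+\pi_1}{2\pi_2^2}=\frac{1+\pi_2}{2\pi_2^2}$.

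Next I handle the variances. Writing $U_j^2=a_j^2+O(|a_j|^3)$ and taking expectations,
$$d\mathbf{E}U_j^2=\tfrac{d\theta^2}{\pi_1^2}\mathbf{E}(Y_j-\pi_1)^2+O_\pi(d|\theta|^3)=\tfrac{d\theta^2}{\pi_1}x_n+O_\pi(d^{-1/2}),$$
and $d(\mathbf{E}U_j)^2=O_\pi(d\theta^4)=O_\pi(d^{-1})$ is absorbed in the error, yielding the estimate for $d\mathbf{Var}(U_j)$. The same computation applied to $V_j$ gives $d\mathbf{E}V_j^2=\frac{d\theta^2\pi_1}{\pi_2^2}x_n+O_\pi(d^{-1/2})$ (note the $\pi_1$ appears here because $\mathbf{E}(Y_j-\pi_1)^2$ contributes a factor $\pi_1 x_n$ to leading order), and $d(\mathbf{E}V_j)^2=O_\pi(d^{-1})$ is again negligible. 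For the covariance I expand $U_jV_j=a_jb_j+O(|a_j|^3+|b_j|^3)$, note $a_jb_j=-\frac{\theta^2}{\pi_1\pi_2}(Y_j-\pi_1)^2$, and obtain $d\mathbf{E}(U_jV_j)=-\frac{d\theta^2}{\pi_2}x_n+O_\pi(d^{-1/2})$, while $d\mathbf{E}U_j\mathbf{E}V_j=O_\pi(d^{-1})$ is negligible.

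The only real obstacle is bookkeeping: every remainder must be shown to be $O_\pi(d^{-1/2})$, which requires simultaneously using $d\theta^2\leq 1$, the moment bounds $\mathbf{E}|Y_j-\pi_1|^k\leq\mathbf{E}(Y_j-\pi_1)^2\leq\pi_1+|\theta|\leq 1+|\theta|$ for $k\geq 2$, and the uniform Taylor bound for $\log(1+w)$ on $|w|\leq\frac12$, which forces the qualifier $d>D(\pi)$. Once $D(\pi)$ is chosen so that $C(\pi)d^{-1/2}\leq\frac12$, every error introduced is at worst $O_\pi(d^{-1/2})$ and the five estimates follow by collecting terms.
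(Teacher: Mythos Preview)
Your proposal is correct and follows essentially the same approach as the paper: Taylor-expand $\log(1+w)$ to second order with a cubic remainder, plug in $w=\theta(Y_j-\pi_1)/\pi_1$ (resp.\ $-\theta(Y_j-\pi_1)/\pi_2$), evaluate the first two moments via Lemma~\ref{covariance}, and bound every leftover term by $O_\pi(|\theta|^3)$, which becomes $O_\pi(d^{-1/2})$ after multiplying by $d$ under $d\theta^2\leq 1$. The paper carries this out explicitly only for $d\mathbf{E}U_j$ and declares the other four estimates ``similar''; your write-up is in fact more detailed in spelling out the variance and covariance cases, but the underlying argument is identical.
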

\begin{proof}
	Starting with the Taylor series expansion of $\log(1+w)$, there
	exists a constant $W > 0$, such that when $|w| < W$,
	\begin{equation}
	\label{log} \left|\log(1 + w)- w + \frac{w^2}{2}\right|\leq|w|^3.
	\end{equation}
	Taking $D=D(\pi)$ sufficiently large, when $d>D$, we have that $|\theta|\leq
	d^{-\frac12}$ is small enough to guarantee equation \eqref{log} for
	$w=\theta(Y_j-\pi_1)/\pi_1$ and then
	\begin{eqnarray*}
		\left|\mathbf{E}U_j-\frac{\theta^2}{2\pi_1}x_n\right|
		&\leq&\mathbf{E}\left|U_j-w+\frac{w^2}{2}\right|
	+\left|\mathbf{E}w-\mathbf{E}\frac{w^2}{2}-\frac{\theta^2}{2\pi_1}x_n\right|
		\\
		&\leq&\mathbf{E}\frac{\theta^3}{\pi_1^3}|Y_j-\pi_1|^3+\frac{\theta^3}{2\pi_1^2}|z_n-\pi_1x_n|
		\\
		&\leq&\frac{\theta^3}{\pi_1^3}+\frac{\theta^3}{2\pi_1^2}
		\\
		&\leq&C(\pi)d^{-3/2}
	\end{eqnarray*}
	for some constant $C=C(\pi)$, where the third inequality follows
	from $0\leq z_n\leq x_n\leq 1$. The rest estimates follow similarly.
\end{proof}
Next, define a 2-dimensional vector
$\mu=(\mu_1, \mu_2)$ with
$\mu_1=\frac{1}{2\pi_1},
\mu_2=-\frac{1+\pi_2}{2\pi_2^2}
$, 
and a $2\times 2$-covariance matrix
$$
\Sigma=\left(
\begin{array}{cc}
\frac{1}{\pi_1} & -\frac{1}{\pi_2} \\
-\frac{1}{\pi_2} & \frac{\pi_1}{\pi_2^2} \\
\end{array}
\right),
$$
which is a positive semi-definite symmetric $2\times 2$-matrix. Let
$(G_1, G_2)$ possess the Gaussian distribution $\mathbf{N}(\mu,
\Sigma)$, then the following lemma can be established by the Central Limit Theorem, the Gaussian approximation and the Portmanteau
Theorem.
\begin{lemma}
	\label{Gaussian} Let $\psi : \mathbb{R}^2\mapsto \mathbb{R}$ be a
	differentiable bounded function. For any $\varepsilon>0$, there
	exists $D=D(\pi, \psi, \varepsilon)>0$, such that if $d>D$ then
	$$
	\left|\mathbf{E}\psi\left(\sum_{j=1}^dU_j,
	\sum_{j=1}^dV_j\right)-\mathbf{E}\psi(G_1, G_2)\right|\leq
	\varepsilon.
	$$
\end{lemma}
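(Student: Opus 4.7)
The plan is to invoke a multivariate Central Limit Theorem for triangular arrays on the i.i.d.\ pairs $(U_j,V_j)_{1\le j\le d}$, whose common distribution depends on $d$ through $\theta$ and through the distribution of $Y_j$, and then transfer the resulting weak convergence through the bounded continuous function $\psi$ by way of the Portmanteau Theorem. Lemma~\ref{meansandvariances} already matches the first two moments: $d\mathbf{E}(U_j,V_j)$ and $d\,\mathrm{Cov}(U_j,V_j)$ approach the parameters $\mu$ and $\Sigma$ (up to the common scaling inherited from $d\theta^2 x_n$) with error $O(d^{-1/2})$, so the only remaining work is to justify the CLT in this triangular setting and to handle the passage from weak convergence to convergence of $\mathbf{E}\psi$.

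To verify the Lindeberg condition, I would use that $0\le Y_j\le 1$ forces $|Y_j-\pi_1|\le 1$, and $|\theta|\le d^{-1/2}$ forces $|\theta(Y_j-\pi_1)/\pi_i|\le C(\pi)d^{-1/2}$; combined with the elementary inequality $|\log(1+w)|\le 2|w|$ valid for $|w|\le 1/2$, this gives the deterministic bound $|U_j|,|V_j|\le C(\pi)d^{-1/2}$ once $d$ is large enough. In particular, for any fixed $\eta>0$ the truncated second moments $\mathbf{E}[U_j^2\,\mathbf{1}_{\{|U_j|>\eta\}}]$ and $\mathbf{E}[V_j^2\,\mathbf{1}_{\{|V_j|>\eta\}}]$ vanish identically for all sufficiently large $d$, so Lindeberg's condition holds trivially. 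The multivariate CLT for i.i.d.\ triangular arrays then yields $(\sum_j U_j,\sum_j V_j)\Rightarrow (G_1,G_2)$, and since $\psi$ is bounded and continuous, the Portmanteau Theorem delivers the announced inequality once $D=D(\pi,\psi,\varepsilon)$ is taken sufficiently large.

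The main obstacle is precisely this triangular-array aspect: the common law of $(U_j,V_j)$ depends jointly on $d$ (through $\theta$) and on $n$ (through the distribution of $Y_j$), so the classical CLT cannot be invoked verbatim. However, because Lemma~\ref{meansandvariances} controls the first two moments with an error that is uniform in the relevant parameters, and because the deterministic bound above eliminates every tail contribution, no further subtlety arises and the triangular-array CLT proceeds routinely. Note also that although the hypothesis on $\psi$ asks for differentiability, continuity and boundedness alone suffice for the Portmanteau step, so differentiability is only needed if one later wishes to quantify the rate of convergence.
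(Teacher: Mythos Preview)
Your approach is exactly the one the paper takes: the paper gives no detailed argument but simply states that the lemma ``can be established by the Central Limit Theorem, the Gaussian approximation and the Portmanteau Theorem.'' Your expansion---using the deterministic bound $|U_j|,|V_j|\le C(\pi)d^{-1/2}$ from $|\theta|\le d^{-1/2}$ to verify the Lindeberg condition for the triangular array, and then passing the weak convergence through the bounded continuous $\psi$ via Portmanteau---is precisely what the paper intends, and your remark that the limiting parameters carry the common scale $d\theta^2 x_n$ (so that effectively $(G_1,G_2)\sim\mathbf{N}(s\mu,s\Sigma)$ with $s=d\theta^2 x_n$) is in fact needed to match how the lemma is used in Lemma~\ref{Gaussianapproximation}.
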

Next, define
$$
\psi(w_1, w_2)=\frac{\pi_1e^{w_1}}{\pi_1e^{w_1}+\pi_2e^{w_2}}
$$
and then
\begin{eqnarray*}
	x_{n+1}&=&\mathbf{E}\frac{\pi_1Z_1}{\pi_1Z_1+\pi_2Z_2}-\pi_1
	\\
	&=&\mathbf{E}\frac{\pi_1\exp\left(\sum_{j=1}^dU_j\right)}{\pi_1\exp\left(\sum_{j=1}^dU_j\right)+\pi_2\exp\left(\sum_{j=1}^dV_j\right)}-\pi_1
	\\
	&=&\mathbf{E}\psi\left(\sum_{j=1}^dU_j,
	\sum_{j=1}^dV_j\right)-\pi_1.
\end{eqnarray*}
If $(W_1, W_2)$ has the Gaussian distribution $\mathbf{N}(0, \Sigma)$,
then $(s\mu_1+\sqrt{s}W_1, s\mu_2+\sqrt{s}W_2)$ is distributed
according to $\mathbf{N}(s\mu, s\Sigma)$. At last, define
\begin{eqnarray*}
	g(s)=\mathbf{E}\psi(s\mu_1+\sqrt{s}W_1, s\mu_2+\sqrt{s}W_2)-\pi_1
=\mathbf{E}\frac{\pi_1\exp(s\mu_1+\sqrt{s}W_1)}{\pi_1\exp(s\mu_1+\sqrt{s}W_1)+\pi_2\exp(s\mu_2+\sqrt{s}W_2)}-\pi_1.
\end{eqnarray*}

Therefore, Lemma~\ref{Gaussian} implies the
following approximation to $x_{n+1}$.
\begin{lemma}
	\label{Gaussianapproximation} For arbitrary $\varepsilon>0$, there
	exists a $D=D(\pi, \varepsilon)>0$, such that whenever $d>D$,
	$$
	\big{|}x_{n+1}-g(d\theta^2x_n)\big{|}\leq \varepsilon.
	$$
\end{lemma}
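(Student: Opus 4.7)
The plan is to read this lemma as a direct corollary of the Gaussian approximation in Lemma~\ref{Gaussian} applied to the specific test function
$$\psi(w_1,w_2) = \frac{\pi_1 e^{w_1}}{\pi_1 e^{w_1}+\pi_2 e^{w_2}},$$
so that both sides of the desired inequality become expectations of $\psi$ against different random arguments. Since $\psi$ takes values in $[0,1]$ and has uniformly bounded partial derivatives on $\mathbb{R}^2$, the hypotheses of Lemma~\ref{Gaussian} are trivially satisfied.

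First I would rewrite, with $s=d\theta^2 x_n$ and $(W_1,W_2)\sim\mathbf{N}(0,\Sigma)$,
$$x_{n+1}+\pi_1 = \mathbf{E}\,\psi\!\Bigl(\sum_{j=1}^d U_j,\ \sum_{j=1}^d V_j\Bigr), \qquad g(s)+\pi_1 = \mathbf{E}\,\psi\bigl(s\mu_1+\sqrt{s}W_1,\ s\mu_2+\sqrt{s}W_2\bigr),$$
so that the target bound $|x_{n+1}-g(d\theta^2 x_n)|\le\varepsilon$ reduces to a comparison of two expectations of $\psi$. By Lemma~\ref{meansandvariances}, the aggregate mean vector and covariance of $(\sum_j U_j,\sum_j V_j)$ satisfy
$$\bigl(d\,\mathbf{E}U_j,\ d\,\mathbf{E}V_j\bigr)=s\mu+O(d^{-1/2}),\qquad d\cdot\mathrm{Cov}(U_1,V_1)=s\Sigma+O(d^{-1/2}),$$
uniformly in $x_n\in[0,1]$ (using $d\theta^2\le 1$). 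The multivariate Central Limit Theorem applied to the i.i.d.\ pairs $(U_j,V_j)$ then identifies the limiting Gaussian $(G_1,G_2)$ in Lemma~\ref{Gaussian} with $\mathbf{N}(s\mu,s\Sigma)$, that is, with the law of $(s\mu_1+\sqrt{s}W_1,\,s\mu_2+\sqrt{s}W_2)$. Invoking Lemma~\ref{Gaussian} for this particular $\psi$ therefore yields
$$\left|\mathbf{E}\,\psi\!\Bigl(\sum_j U_j,\sum_j V_j\Bigr)-\mathbf{E}\,\psi\bigl(s\mu_1+\sqrt{s}W_1,\,s\mu_2+\sqrt{s}W_2\bigr)\right|\le\varepsilon$$
for all $d\ge D(\pi,\varepsilon)$, and subtracting $\pi_1$ from both sides completes the argument.

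The main technical subtlety will be guaranteeing the uniformity of the Gaussian approximation in the parameter $s=d\theta^2 x_n$, which ranges over the compact interval $[0,1]$; when $x_n$ is small, $s$ is small and the target law $\mathbf{N}(s\mu,s\Sigma)$ becomes nearly degenerate, so one must check that the Portmanteau/CLT step still furnishes the claimed error control in that regime. This can be handled either by a Berry--Esseen type quantitative estimate (using the moment bounds in Lemma~\ref{meansandvariances} to absorb the $O(d^{-1/2})$ mismatches between the actual mean/covariance of $(\sum U_j,\sum V_j)$ and $(s\mu,s\Sigma)$), or by a compactness argument over $s\in[0,1]$ exploiting the continuity of the Gaussian expectation in its parameters together with the bounded Lipschitz property of $\psi$.
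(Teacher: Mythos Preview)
Your proposal is correct and follows exactly the paper's approach: the paper derives Lemma~\ref{Gaussianapproximation} in one line by applying Lemma~\ref{Gaussian} to the specific test function $\psi(w_1,w_2)=\pi_1 e^{w_1}/(\pi_1 e^{w_1}+\pi_2 e^{w_2})$, after having rewritten $x_{n+1}+\pi_1=\mathbf{E}\psi(\sum_j U_j,\sum_j V_j)$ and $g(s)+\pi_1=\mathbf{E}\psi(s\mu_1+\sqrt{s}W_1,s\mu_2+\sqrt{s}W_2)$. You are in fact more careful than the paper in flagging the uniformity of the approximation over $s=d\theta^2 x_n\in[0,1]$, which the paper leaves implicit in the claim that $D$ depends only on $(\pi,\varepsilon)$.
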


\subsection{Asymptotic Estimation of the Reconstruction Threshold}
\indent In order to estimate $x_{n+1}$, it suffices to investigate the
properties of $g(s)$ on the interval $[0, \pi_2]$, considering that $0\leq
x_n\leq\pi_2$ and $d\theta^2\leq1$.
\begin{lemma}
	\label{increasing} The function $g(s)$ is continuously
	differentiable and increasing on the interval $(0, \pi_2]$.
\end{lemma}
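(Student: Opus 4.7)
My plan is to collapse the two-dimensional Gaussian integral defining $g$ to a one-dimensional one by exploiting a fortunate identity between $\mu$ and $\Sigma$, then compute $g'(s)$ via Gaussian integration by parts, and finally verify positivity through the logistic identity $f''-f' = 2f(1-f)^2$.

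First, I would observe the algebraic identities
$$\mu_2-\mu_1 = -\frac{\pi_1+\pi_2}{2\pi_1\pi_2^2} = -\frac{1}{2\pi_1\pi_2^2}, \qquad \mathbf{Var}(W_2-W_1) = \Sigma_{11}+\Sigma_{22}-2\Sigma_{12} = \frac{(\pi_1+\pi_2)^2}{\pi_1\pi_2^2} = \frac{1}{\pi_1\pi_2^2} =: \sigma^2,$$
which exhibit the fortunate cancellation $\mu_2-\mu_1 = -\sigma^2/2$. Dividing the numerator and denominator of $\psi(s\mu_1+\sqrt{s}W_1, s\mu_2+\sqrt{s}W_2)$ by $e^{s\mu_1+\sqrt{s}W_1}$ and writing $W_2-W_1=\sigma Z$ with $Z\sim\mathbf{N}(0,1)$, one obtains the clean one-dimensional representation
$$g(s)+\pi_1 = \mathbf{E}\bigl[f(Y_s)\bigr], \qquad f(y):=\frac{\pi_1}{\pi_1+\pi_2 e^y}, \qquad Y_s := -\frac{s\sigma^2}{2}+\sqrt{s}\sigma Z.$$

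Since $f$, $f'$, $f''$ are uniformly bounded on $\mathbb{R}$ and $\partial_s Y_s = -\sigma^2/2 + \sigma Z/(2\sqrt{s})$ lies in every $L^p$ uniformly on compact subsets of $(0,\infty)$, the dominated convergence theorem justifies differentiating under the expectation. This yields continuous differentiability on $(0,\pi_2]$ together with
$$g'(s) = -\frac{\sigma^2}{2}\mathbf{E}[f'(Y_s)] + \frac{\sigma}{2\sqrt{s}}\mathbf{E}[Zf'(Y_s)].$$
Applying Stein's lemma to the second term via $\partial_Z f'(Y_s) = \sqrt{s}\sigma f''(Y_s)$ collapses this to
$$g'(s) = \frac{\sigma^2}{2}\mathbf{E}\bigl[f''(Y_s)-f'(Y_s)\bigr].$$

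Finally, by direct differentiation of the logistic function one has $f'(y) = -f(y)(1-f(y))$ and $f''(y) = f(y)(1-f(y))(1-2f(y))$, whence
$$f''(y)-f'(y) = f(y)(1-f(y))\bigl[(1-2f(y))+1\bigr] = 2f(y)\bigl(1-f(y)\bigr)^2 > 0$$
for every $y\in\mathbb{R}$, since $f(y)\in(0,1)$. Therefore $g'(s) = \sigma^2\,\mathbf{E}\bigl[f(Y_s)(1-f(Y_s))^2\bigr] > 0$ on $(0,\pi_2]$, completing the proof. The only subtle point is the algebraic cancellation $\mu_2-\mu_1 = -\sigma^2/2$ in the first step; without it one is left with a genuinely two-dimensional Gaussian expectation in which the Stein calculation and the subsequent sign check become much messier.
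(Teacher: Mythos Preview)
Your proof is correct, but the route differs from the paper's. Both proofs begin with the same one-dimensional reduction---the identities $\mu_2-\mu_1=-\frac{1}{2\pi_1\pi_2^2}$ and $\mathbf{Var}(W_2-W_1)=\frac{1}{\pi_1\pi_2^2}$, together with the cancellation $\mu_2-\mu_1=-a/2$, are also computed and used in the paper. The divergence is in the monotonicity step. The paper argues via a coupling: writing $\sqrt{s}(W_1,W_2)\stackrel{d}{=}\sqrt{s'}(W_1,W_2)+\sqrt{s-s'}(W_1',W_2')$ with an independent copy $(W_1',W_2')$, it conditions on $(W_1,W_2)$, evaluates the inner exponential expectation using the Gaussian moment formula, and then applies Jensen's inequality for the convex map $x\mapsto(1+x)^{-1}$ to obtain $g(s)\ge g(s')$. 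Your argument instead differentiates under the expectation, applies Stein's lemma to convert $\mathbf{E}[Zf'(Y_s)]$ into $\sqrt{s}\,\sigma\,\mathbf{E}[f''(Y_s)]$, and closes with the logistic identity $f''-f'=2f(1-f)^2>0$.

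What each buys: the paper's Jensen argument is qualitative and avoids any derivative calculus on $g$, making it slightly more robust (it would still work if the logistic algebra were less clean). Your approach yields the explicit formula $g'(s)=\sigma^2\,\mathbf{E}\bigl[f(Y_s)(1-f(Y_s))^2\bigr]$, which gives strict positivity immediately and could in principle be reused for quantitative bounds on $g'$. Both rely in the same essential way on the ``fortunate cancellation'' $\mu_2-\mu_1=-\sigma^2/2$; the paper needs it so that the conditional exponential moment exactly reproduces the drift at level $s'$, and you need it so that the drift and diffusion contributions combine into $\frac{\sigma^2}{2}\mathbf{E}[f''-f']$.
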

\begin{proof}
	When $s>0$, it is concluded that
	\begin{equation*}
	\begin{aligned}
	&\mathbf{E}\left|\frac{\partial}{\partial
		s}\frac{\pi_1\exp(s\mu_1+\sqrt{s}W_1)}{\pi_1\exp(s\mu_1+\sqrt{s}W_1)+\pi_2\exp(s\mu_2+\sqrt{s}W_2)}\right|
	\\
	&=\mathbf{E}\left|\frac{\partial}{\partial
		s}\left(1+\frac{\pi_2}{\pi_1}\exp(s(\mu_2-\mu_1)+\sqrt{s}(W_2-W_1))\right)^{-1}\right|
	\\
	&=\mathbf{E}\left|\frac{\frac{\pi_2}{\pi_1}\exp(s(\mu_2-\mu_1)+\sqrt{s}(W_2-W_1))}{\left(1+\frac{\pi_2}{\pi_1}\exp(s(\mu_2-\mu_1)+\sqrt{s}(W_2-W_1))\right)^2}\left(\mu_2-\mu_1+\frac{W_2-W_1}{2\sqrt{s}}\right)\right|
	\\
	&\leq\frac14\mathbf{E}\left|\mu_2-\mu_1+\frac{W_2-W_1}{2\sqrt{s}}\right|
	\\
	&<\infty,
	\end{aligned}
	\end{equation*}
	by the fact that
	$\left|\frac{\pi_2}{\pi_1}e^t \bigg/\left(1+\frac{\pi_2}{\pi_1}e^t\right)^2\right|\leq1/4$
	holds for any $t\in\mathbb{R}$. Then we establish the
	differentiability with respect to $s$.
	
	Now, let $(W_1', W_2')$ be an independent copy of $(W_1, W_2)$. Thus
	if $0\leq s'\leq s$, it is feasible to construct equivalent
	distributions such as
	$$
	\sqrt{s}(W_1, W_2)\; \sim \; \sqrt{s'}(W_1, W_2)+\sqrt{s-s'}(W_1', W_2').
	$$
	In view of $(W_1, W_2) \sim \mathbf{N}(0, \Sigma)$, it follows that
	$\mathbf{E}(W_2-W_1)=0$ and
	\begin{equation*}
	\mathbf{Var}(W_2-W_1)
		=\mathbf{E}W_2^2+\mathbf{E}W_1^2-2\mathbf{E}W_1W_2\\
		=\frac{1}{\pi_1}+\frac{\pi_1}{\pi_2^2}-2\left(-\frac{1}{\pi_2}\right)\\
		=\frac{1}{\pi_1\pi_2^2},
	\end{equation*}
	which implies that $W_2-W_1$ and $W_2'-W_1'$ are both distributed as
	$\mathbf{N}(0, a)$, with $a=1/\pi_1\pi_2^2$.
	
	Next, it is well known that if $W$ has the distribution
	$\mathbf{N}(\mu, \sigma^2)$, the expectation of the exponential
	random variable could be estimated as
	\begin{equation}
	\label{exponential} \mathbf{E}e^W=e^{\mu+\frac{\sigma^2}{2}},
	\end{equation}
	based on which, we are able to estimate the conditional
	expectation given $W_1$ and $W_2$:
	\begin{eqnarray*}
		\mathbf{E}\left[\exp(\sqrt{s'}(W_2-W_1)+\sqrt{s-s'}(W_2'-W_1'))\bigg |\{W_1,
		W_2\}\right] =
		\exp\left[\sqrt{s'}(W_2-W_1)+\frac{a}{2}(s-s')\right].
	\end{eqnarray*}
	Then applying Jensen's inequality, and considering that the function
	$(1+x)^{-1}$ is convex and
	$$\mu_2-\mu_1=-(1+\pi_2)/(2\pi_2^2)-1/(2\pi_1)=-1/(2\pi_1\pi_2^2)=-a/2,$$
	we have
	\begin{eqnarray*}
		g(s)
		&=&\mathbf{E}\left(1+\frac{\pi_2}{\pi_1}\exp(s(\mu_2-\mu_1)+\sqrt{s}(W_2-W_1))\right)^{-1}-\pi_1
		\\
		&=&\mathbf{E}\left(1+\frac{\pi_2}{\pi_1}\exp\left(\frac{-as}{2}+\sqrt{s'}(W_2-
		W_1)+\sqrt{s-s'}(W_2'-W_1')\right)\right)^{-1}-\pi_1
		\\
		&\geq&\mathbf{E}\left(1+\frac{\pi_2}{\pi_1}\exp\left(-\frac{as}{2}\right)\mathbf{E}\left[\exp(\sqrt{s'}(W_2-W_1)+\sqrt{s-s'}(W_2'-W_1'))\mid\{W_1,
		W_2\}\right]\right)^{-1}-\pi_1
		\\
		&=&\mathbf{E}\left(1+\frac{\pi_2}{\pi_1}\exp\left(-\frac{as'}{2}\right)\mathbf{E}\exp(\sqrt{s'}(W_2-W_1))\right)^{-1}-\pi_1
		\\
		&=&\mathbf{E}\left(1+\frac{\pi_2}{\pi_1}\exp(s'(\mu_2-\mu_1)+\sqrt{s'}(W_2-W_1))\right)^{-1}-\pi_1
		\\
		&=&g(s'),
	\end{eqnarray*}
	as desired.
\end{proof}

It is necessary to discuss the Taylor expansion of $g(s)$ in the small
neighborhoods of $s=0$.
\begin{lemma}
	\label{power} For small $s>0$, we have
	$$
	g(s)=s
	+\frac{1-6\pi_1\pi_2}{2\pi_1\pi_2^2}s^2+\frac{1-24\pi_1\pi_2+90\pi_1^2\pi_2^2}{6\pi_1^2\pi_2^4}s^3+O_\pi(s^4).
	$$
\end{lemma}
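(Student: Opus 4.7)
The plan is to reduce the two-dimensional expectation defining $g(s)$ to a one-dimensional one, and then extract Taylor coefficients via a single differential operator applied to a logistic function. Observe first that $\psi(w_1,w_2)=\pi_1 e^{w_1}/(\pi_1 e^{w_1}+\pi_2 e^{w_2})$ depends on $(w_1,w_2)$ only through the difference $t=w_1-w_2$; setting $h(t)=\pi_1 e^t/(\pi_1 e^t+\pi_2)=(1+(\pi_2/\pi_1)e^{-t})^{-1}$, one has $h(0)=\pi_1$ and $g(s)=\mathbf{E}h(T)-h(0)$, where $T:=s(\mu_1-\mu_2)+\sqrt{s}(W_1-W_2)$. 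A direct computation using the stated $\mu$ and $\Sigma$ gives $\mu_1-\mu_2=\tfrac{1}{2\pi_1\pi_2^2}$ and $\mathbf{Var}(W_1-W_2)=\tfrac{1}{\pi_1\pi_2^2}$. Writing $\beta=1/(\pi_1\pi_2^2)$ and $\alpha=\beta/2$, we have $T\sim \mathbf{N}(\alpha s,\beta s)$.

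Define $F(s):=\mathbf{E}h(T)$ and the differential operator $\mathcal{L}:=\alpha\partial_t+\tfrac{\beta}{2}\partial_t^{2}$. The cornerstone of the plan is the identity
\[
F^{(n)}(s)=\mathbf{E}(\mathcal{L}^{n}h)(T),\qquad\text{so that}\qquad F^{(n)}(0)=(\mathcal{L}^{n}h)(0).
\]
This follows by induction: differentiating $F$ once brings out a factor $\alpha+W/(2\sqrt{s})$, and applying Stein's identity $\mathbf{E}[Wh'(\sqrt{s}W+\alpha s)]=\beta\sqrt{s}\,\mathbf{E}h''(T)$ kills the singular $1/\sqrt{s}$, yielding $F'(s)=\mathbf{E}(\mathcal{L}h)(T)$; the general step is identical. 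Because $h$ and all its derivatives are bounded (they are polynomials in $h\in(0,1)$), the interchange of differentiation and expectation is justified.

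Next I would compute $(\mathcal{L}^{n}h)(0)$ for $n=1,2,3$. The logistic identity $h'=h(1-h)$, iterated, produces closed forms
\[
h'(0)=\pi_1\pi_2,\ h''(0)=\pi_1\pi_2(\pi_2-\pi_1),\ h'''(0)=\pi_1\pi_2(1-6\pi_1\pi_2),\ \ldots
\]
and by induction one obtains $h^{(k)}(0)=\pi_1\pi_2\cdot p_k(\pi_1\pi_2)$ (with factors of $\pi_2-\pi_1$ for even $k$). Expanding $\mathcal{L}^{n}h=\sum_{j=0}^{n}\binom{n}{j}\alpha^{n-j}(\beta/2)^{j}h^{(n+j)}$ and using $\alpha=\beta/2$ simplifies the sums; for instance $\mathcal{L}^{3}h=(\beta/2)^{3}(h'''+3h^{(4)}+3h^{(5)}+h^{(6)})$. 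After substituting the explicit $h^{(k)}(0)$, the alternating pieces in $(\pi_2-\pi_1)$ combine with $1+(\pi_2-\pi_1)=2\pi_2$ to produce the clean factors; specifically one finds $(\mathcal{L}h)(0)=\beta\pi_1\pi_2^2$, $(\mathcal{L}^{2}h)(0)=\beta^{2}\pi_1\pi_2^2(1-6\pi_1\pi_2)$, and $(\mathcal{L}^{3}h)(0)=\beta^{3}\pi_1\pi_2^2(1-24\pi_1\pi_2+90\pi_1^{2}\pi_2^{2})$. Dividing by $n!$ and substituting $\beta=1/(\pi_1\pi_2^2)$ yields exactly the three claimed coefficients.

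Finally, the $O_\pi(s^4)$ remainder follows from Taylor's theorem applied to $F$ at $0$: the fourth derivative $F^{(4)}(s)=\mathbf{E}(\mathcal{L}^{4}h)(T)$ is uniformly bounded in $s\in[0,\pi_2]$ because $\mathcal{L}^{4}h$ is a bounded function (polynomial in bounded derivatives of $h$ with coefficients depending only on $\pi$). I expect the main obstacle to be bookkeeping in the computation of step three: verifying that the combinatorial sum $\sum\binom{n}{j}h^{(n+j)}(0)$ collapses to the compact polynomials $1$, $1-6\pi_1\pi_2$, and $1-24\pi_1\pi_2+90\pi_1^{2}\pi_2^{2}$ requires carefully tracking how the $(\pi_2-\pi_1)$-antisymmetric and symmetric parts of the $h^{(k)}(0)$ recombine, which is where a computational slip is most likely.
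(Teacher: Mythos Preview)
Your proposal is correct and takes a genuinely different route from the paper. Both proofs begin with the same one-dimensional reduction: since $\psi$ depends only on $w_1-w_2$, one is left with $\mathbf{E}h(T)$ for a scalar Gaussian $T\sim\mathbf{N}(\alpha s,\beta s)$ (the paper parametrizes with $W=-T$). From there the two arguments diverge. The paper expands $h(T)=(1+\pi_2(e^{W}-1))^{-1}$ as a finite geometric series in $(e^{W}-1)$ and evaluates the moments $\mathbf{E}(e^{W}-1)^{k}$, $k\le 7$, via the log-normal moment formula $\mathbf{E}e^{W}=e^{\mu+\sigma^2/2}$; the Taylor coefficients then fall out of a purely algebraic combination of these moments. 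Your approach instead recognizes the semigroup structure: $s\mapsto T_s$ is a drifted Brownian motion, so $F(s)=\mathbf{E}h(T_s)$ satisfies the Kolmogorov backward relation $F'(s)=\mathbf{E}(\mathcal{L}h)(T_s)$, whence $F^{(n)}(0)=(\mathcal{L}^n h)(0)$. The logistic identity $h'=h(1-h)$ then turns the problem into a short recursion for $h^{(k)}(0)$, and the coincidence $\alpha=\beta/2$ produces the binomial collapse you describe. What each approach buys: the paper's moment method is completely elementary, needing nothing beyond the log-normal mean formula, but it requires tracking seven separate moment expansions and their cross-cancellations. Your generator method is conceptually cleaner, gives the remainder bound for free from boundedness of $\mathcal{L}^4 h$, and makes transparent why the factor $1+(\pi_2-\pi_1)=2\pi_2$ repeatedly appears (it is the pairing of consecutive logistic derivatives); on the other hand it requires justifying the interchange $F'=\mathbf{E}(\mathcal{L}h)$ through $s=0$, which you handle via Stein's lemma and boundedness but which the paper's purely algebraic route avoids entirely.
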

\begin{proof}
	Define $W=s(\mu_2-\mu_1)+\sqrt{s}(W_2-W_1)$. By the results in Lemma \ref{increasing}, it is apparent that $W\sim
	\mathbf{N}\left(-as/2, as\right)$. Therefore by equation \eqref{exponential}
	the following moments can be calculated:
	\begin{eqnarray*}
		&&\mathbf{E}(e^W-1)=e^{\frac{-as}{2}+\frac{as}{2}}-1=e^0-1=0,
		\\
		&&\mathbf{E}(e^W-1)^2=e^{as}-1=as+\frac{a^2s^2}{2}+\frac{a^3s^3}{6}+O(s^4),
		\\
		&&\mathbf{E}(e^W-1)^3=e^{3as}-3e^{as}+2=3a^2s^2+4a^3s^3+O(s^4),
		\\
		&&\mathbf{E}(e^W-1)^4=e^{6as}-4e^{3as}+6e^{as}-3=3a^2s^2+19a^3s^3+O(s^4),
		\\
		&&\mathbf{E}(e^W-1)^5=e^{10as}-5e^{6as}+10e^{3as}-10e^{as}+4=30a^3s^3+O(s^4),
		\\
		&&\mathbf{E}(e^W-1)^6=e^{15as}-6e^{10as}+15e^{6as}-20e^{3as}+15e^{as}-5=15a^3s^3+O(s^4),
		\\
		&&\mathbf{E}(e^W-1)^7=e^{21as}-7e^{15as}+21e^{10as}-35e^{6as}+35e^{3as}-21e^{as}+6=O(s^4).
	\end{eqnarray*}
	Next starting from the identity
	$$
	\frac{1}{1+\pi_2(e^W-1)}=\sum_{n=0}^6(-1)^n\pi_2^n(e^W-1)^n-\pi_2^7(e^W-1)^7\frac{1}{1+\pi_2(e^W-1)},
	$$
	we obtain the power series of $g(s)$ as
	\begin{eqnarray*}
		\frac{g(s)+\pi_1}{\pi_1}&=&\frac{1}{\pi_1}\mathbf{E}\frac{\pi_1\exp(s\mu_1+\sqrt{s}W_1)}{\pi_1\exp(s\mu_1+\sqrt{s}W_1)+\pi_2\exp(s\mu_2+\sqrt{s}W_2)}
		\\
		&=&\mathbf{E}\frac{1}{1+\pi_2(e^W-1)}
		\\
		&=&\mathbf{E}\left[\sum_{n=0}^6(-1)^{n}\pi_2^n(e^W-1)^n-\pi_2^7(e^W-1)^7\frac{1}{1+\pi_2(e^W-1)}\right]
		\\
		&=&1+\frac{1}{\pi_1}\left(s
		+\frac{1-6\pi_1\pi_2}{2\pi_1\pi_2^2}s^2+\frac{1-24\pi_1\pi_2+90\pi_1^2\pi_2^2}{6\pi_1^2\pi_2^4}s^3+O(s^4)\right),
	\end{eqnarray*}
	that is,
	$$
	g(s)=s
	+\frac{1-6\pi_1\pi_2}{2\pi_1\pi_2^2}s^2+\frac{1-24\pi_1\pi_2+90\pi_1^2\pi_2^2}{6\pi_1^2\pi_2^4}s^3+O(s^4).
	$$
\end{proof}

\subsection{Proof of Theorem \ref{large_degree}}
\label{Proof_of_Theorem_1.2}
In this section, we precisely rephrase Theorem \ref{large_degree} and give its rigorous proof.
\begin{theorem}
When $\pi_1\pi_2<\frac16$, define
$$\omega^*=\inf \{\omega>0: \; \exists s\in (0,\pi_2), \; g(\omega s)=s\}.
	$$
	Then $0<\omega^*<1$, and for any $\delta>0$ there exists a $D=D(\pi,
	\delta)$, such that if $d>D$ then the model has reconstruction when
	$d\theta^2\geq \omega^*+\delta$, but does not have reconstruction
	when $d\theta^2\leq \omega^*-\delta$. In other words,
$$\lim_{d\to\infty}d\left(\theta^\pm\right)^2=\omega^*.$$
\end{theorem}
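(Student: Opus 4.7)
The plan is to reduce the theorem to three claims: the bounds $0 < \omega^* < 1$; when $d$ is large and $d\theta^2 \leq \omega^* - \delta$, then $x_n \to 0$; and when $d$ is large and $d\theta^2 \geq \omega^* + \delta$, then $\liminf_n x_n > 0$. Together with Lemma~\ref{nonreconstruction_equivalent}, the last two claims pin down $d(\theta^{\pm})^2 \in [\omega^* - \delta, \omega^* + \delta]$ for every $d > D(\pi, \delta)$, which is exactly the asymptotic $\lim d(\theta^{\pm})^2 = \omega^*$. For $\omega^* < 1$, Lemma~\ref{power} together with $\pi_1\pi_2 < 1/6$ yields $g(s) = s + Cs^2 + O(s^3)$ with $C > 0$, so $g(s) > s$ on a right neighborhood of zero; since $g(s) < \pi_2$ strictly (the expectation of a ratio almost surely bounded above by $1$), the intermediate value theorem produces a transversal root of $g(s) - s$ in $(0, \pi_2)$, so $\omega = 1$ lies in the defining set and the sign change survives small decreases of $\omega$. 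For $\omega^* > 0$, Lemma~\ref{power} extends $g$ to be $C^1$ at the origin with $g'(0) = 1$ and Lemma~\ref{increasing} supplies $C^1$ smoothness on $(0, \pi_2]$, so $M := \sup_{[0,\pi_2]} g' < \infty$; for any $\omega < 1/M$ the derivative of $s \mapsto g(\omega s)$ stays below $1$ uniformly, so the map lies strictly below the identity by the mean value theorem, excluding such $\omega$.

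For the non-reconstruction direction, set $\omega = d\theta^2 \leq \omega^* - \delta$. First, $g(\omega s) < s$ strictly on $(0, \pi_2]$: if equality or reversal held at some $s_0$, the monotonicity of $g$ and continuity of $\omega \mapsto g(\omega s_0)$ together with $g(0) = 0$ would produce a root at some $\omega' < \omega^*$, contradicting the infimum. The iteration $x_{n+1} \approx g(\omega x_n)$ is then handled in two phases. In the macroscopic phase, fix a small $\eta > 0$; on the compact set $[\eta, \pi_2]$ the continuous gap $s - g(\omega s)$ attains a positive minimum $c_\eta$, and Lemma~\ref{Gaussianapproximation} with tolerance $c_\eta/2$ yields $x_{n+1} \leq x_n - c_\eta/2$ whenever $x_n \geq \eta$, forcing $x_n$ below $\eta$ in finitely many steps. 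In the microscopic phase, equation~\eqref{lemLA} gives $x_{n+1} \leq (\omega + C x_n) x_n \leq (\omega^* - \delta/2) x_n$ provided $\eta$ was chosen so that $C\eta \leq \delta/2$; since $\omega^* < 1$, this forces $x_n \to 0$ geometrically while keeping the iterates trapped below $\eta$, and Lemma~\ref{nonreconstruction_equivalent} then converts $x_n \to 0$ into non-reconstruction.

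For the reconstruction direction, set $\omega = d\theta^2 \geq \omega^* + \delta$. By the infimum definition, there exist $\omega_0 \in [\omega^*, \omega^* + \delta/2]$ and $s_0 \in (0, \pi_2)$ with $g(\omega_0 s_0) = s_0$, and monotonicity of $g$ gives $g(\omega s_0) \geq s_0$. The continuous function $g(\omega \cdot) - \mathrm{id}$ is therefore nonnegative at $s_0$ and strictly negative at $\pi_2$ (since $g < \pi_2$), so it has a largest zero $s^* \in [s_0, \pi_2)$. Because $\omega$ lies strictly above the critical value, the tangential configuration at $\omega^*$ has split apart into a transversal stable/unstable pair, so there exists $\mu > 0$ with $g(\omega s) \geq s^* - \mu/2$ on the entire interval $[s^* - \mu, \pi_2]$. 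Starting from $x_0 = \pi_2 \in [s^* - \mu, \pi_2]$ and invoking Lemma~\ref{Gaussianapproximation} with tolerance $\mu/4$, a direct induction yields $x_n \in [s^* - \mu, \pi_2]$ for every $n$, hence $\liminf x_n \geq s^* - \mu > 0$, and Lemma~\ref{nonreconstruction_equivalent} delivers reconstruction.

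The main obstacle is the mismatch between the uniform-in-$n$ but strictly positive Gaussian approximation error and the target $x_n \to 0$: Lemma~\ref{Gaussianapproximation} alone cannot push $x_n$ below its own tolerance, which is why the argument must hand off to the sharper quadratic bound of equation~\eqref{lemLA} once $x_n$ enters the linear regime near zero. On the reconstruction side, the analogous subtlety is verifying that the upper fixed point $s^*$ of $s \mapsto g(\omega s)$ is truly transversal for $\omega$ strictly above $\omega^*$, i.e., that the saddle-node at $\omega^*$ genuinely splits into a stable/unstable pair; this transversality underpins both the robust trapping of $x_n$ near $s^*$ and the claim $\omega^* < 1$.
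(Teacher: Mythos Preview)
Your non-reconstruction argument and your bound $\omega^*>0$ are fine and close in spirit to the paper's. The genuine gap is in the reconstruction direction, exactly where you flag it yourself: having located the largest fixed point $s^*$ of $s\mapsto g(\omega s)$, you assert that ``the tangential configuration at $\omega^*$ has split apart into a transversal stable/unstable pair, so there exists $\mu>0$ with $g(\omega s)\ge s^*-\mu/2$ on $[s^*-\mu,\pi_2]$.'' Nothing you have established rules out $s^*$ being a tangential zero of $g(\omega\cdot)-\mathrm{id}$ from below; the saddle-node heuristic is not a proof. The same hand-waving appears in your $\omega^*<1$ argument (``transversal root'', ``the sign change survives small decreases of $\omega$'').

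The paper bypasses transversality entirely by anchoring at the \emph{critical} parameter rather than the supercritical one. Its nested-compact-sets argument shows that the infimum is attained: there is $s^*\in(0,\pi_2)$ with $g(\omega^*s^*)=s^*$. For $\omega=\omega^*+\delta$ it then sets $\tilde s:=s^*\omega^*/\omega<s^*$, so that $g(\omega\tilde s)=g(\omega^*s^*)=s^*$, and hence $g(\omega\cdot)$ exceeds the identity at $\tilde s$ by the \emph{explicit} gap $s^*-\tilde s=s^*\delta/(\omega^*+\delta)>0$. Taking this gap as the tolerance $\varepsilon$ in Lemma~\ref{Gaussianapproximation}, weak monotonicity of $g$ (Lemma~\ref{increasing}) gives the one-step trap $x_n\ge\tilde s\Rightarrow x_{n+1}\ge g(\omega\tilde s)-\varepsilon=\tilde s$, and induction from $x_0=\pi_2$ finishes. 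The same device cleanly yields $\omega^*<1$: fix any $\bar s$ with $g(\bar s)>\bar s$ (Lemma~\ref{power}) and run the intermediate value theorem in the variable $\omega$, using $g(0\cdot\bar s)=0<\bar s<g(1\cdot\bar s)$, to locate $\bar\omega\in(0,1)$ in the defining set.
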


\begin{proof}
It follows from Lemma~\ref{power} that when $1-6\pi_1\pi_2>0$, there exists $0<\bar{s}<\pi_2$ such that $g(\bar{s})>\bar{s}$. Moreover, noting that $g(0\cdot \bar{s})=g(0)=0<\bar{s}$, the Intermediate Value Theorem implies the existence of $0<\bar{\omega}<1$ such that $g(\bar{\omega}\bar{s})=\bar{s}$. Consequently, $\omega^*$ does exist and $0\leq \omega^*\leq\bar{\omega}<1$. Furthermore, for any $\omega^*<\omega<1$, it follows from Lemma~\ref{increasing} that
the set 
$\left\{0<s<\pi_2: g(\omega s)\geq s\right\}$
is a non-empty compact set bounded away from $0$. Then it is further established by the continuity of $g(s)$ that the set
$$\left\{0< s< \pi_2: g(\omega^*s)=s\right\}=\bigcap_{\omega*<\omega<1}\left\{0<s<\pi_2: g(\omega s)\geq s\right\}$$
is non-empty and compact. Hence it implies immediately that $0<\omega^*<1$. 

Next, taking $s^*\in \{0<s<\pi_2:
	g(\omega^*s)=s\}$ and considering $d\theta^2=\omega^*+\delta$, one has
	$$
	g\left[(\omega^*+\delta)\left(s^*\frac{\omega^*}{\omega^*+\delta}\right)\right]=g(s^*\omega^*)=s^*>s^*\frac{\omega^*}{\omega^*+\delta}.
	$$
	Define $\varepsilon=\varepsilon(\pi,
	\delta)=s^*-s^*\omega^*/(\omega^*+\delta)>0$. By
	Lemma \ref{Gaussianapproximation} there exists a $D=D(\pi,
	\varepsilon)=D(\pi, \delta)$, such that if $d>D$ and $x_n\geq
	s^*\omega^*/(\omega^*+\delta)$ then
	\begin{eqnarray*}
	x_{n+1}\geq g(d\theta^2x_n)-\varepsilon
	\geq g\left[(\omega^*+\delta)\left(s^*\frac{\omega^*}{\omega^*+\delta}\right)\right]-\left(s^*-s^*\frac{\omega^*}{\omega^*+\delta}\right)
	=s^*\frac{\omega^*}{\omega^*+\delta},
\end{eqnarray*}
where the second inequality follows from Lemma \ref{increasing}.
Consequently, it is shown by induction, and noting the initial value
$x_0=\pi_2>s^*$, that $x_n\geq s^*\omega^*/(\omega^*+\delta)$ for
all $n$, which further establishes reconstruction. At last,
Proposition 12 in \cite{mossel2001reconstruction} implies that the reconstruction is
solvable for any $d\theta^2\geq \omega^*+\delta$.

On the other hand, when $d\theta^2=\omega^*-\delta$, we have
$g(d\theta^2s)\leq d\theta^2s/\omega^*$. Taking
$\eta=\left(1-\omega^*\right)/2>0$ in equation \eqref{lemLA}, there exists
a constant $\zeta=\zeta(\pi)$, such that if $x_n<\zeta$ then
$$ x_{n+1}\leq d\theta^2x_n+\eta x_n\leq
\frac{1}{2}\left(1+\omega^*\right)x_n,$$
where the fact that $\left(1+\omega^*\right)/2<1$ implies that $\lim_{n\to
	\infty}x_n=0$ and then there is non-reconstruction. So, it
suffices to find some $m$, such that $x_m<\zeta$, which could be
accomplished by choosing $$\varepsilon=
\left(1-\frac{d\theta^2}{\omega^*}\right)\frac\zeta2$$ in
Lemma \ref{Gaussianapproximation}. Then, there exists a sufficiently
large $D=D(\pi, \varepsilon)=D(\pi, \delta)$, such that if $d>D$ and
$x_n\geq\zeta$ then
$$
x_{n+1}\leq g(d\theta^2x_n)+\varepsilon\leq
\frac{d\theta^2}{\omega^*}x_n+\varepsilon\leq
\frac{1}{2}\left(1+\frac{d\theta^2}{\omega^*}\right)x_n.
$$
Then the fact that $\left(1+d\theta^2/\omega^*\right)/2<1$
guarantees the existence of $m$ satisfying $x_m<\zeta$, as desired.
Finally using Proposition 12 in~\cite{mossel2001reconstruction} again, one can conclude
non-reconstruction for any $d\theta^2\leq \omega^*-\delta$.
\end{proof}

\subsection{Proof of Theorem \ref{nonreconstruction}}
\label{Proof_of_Theorem_1.3}
When $1-6\pi_1\pi_2<0$, the proof
of Theorem \ref{nonreconstruction} would resemble
Theorem \ref{reconstruction} in establishing a similar recursive
inequality as equation \eqref{6.4}, under the condition that $x_n\leq\delta$ and
$n\geq N$ for suitable $\delta=\delta(\pi, d)$ and $N=N(\pi)$.
However, there still exists a crucial discrepancy between these two
proofs, that is, Theorem \ref{nonreconstruction} relies heavily on
large $d$. Before we proceed, let us firstly give the following lemma:

\begin{lemma}
	\label{concentrationlarged} For any $0<\varepsilon<1$ and $\alpha
	> 1$, there exist $C=C(\pi, \varepsilon, \alpha)$ and $D=D(\pi, \varepsilon, \alpha)$ such that if
	$d>D$ then 
	\begin{equation}
	\label{lemma6.2}
P\left(\left|\frac{\pi_1Z_1}{\pi_1Z_1+\pi_2Z_2}-\pi_1\right|>\varepsilon\right)\leq Cx_n^\alpha.
	\end{equation}
	Furthermore, there exist $D=D(\pi, \varepsilon)$ and $\delta=\delta(\pi,
	\varepsilon)$ such that if $d>D$ and $x_n\leq\delta$ then
	\begin{equation}
	\label{RxRz} 
	|S|\leq\varepsilon x_n^2.
	\end{equation}
\end{lemma}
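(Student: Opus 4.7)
The lemma has two claims; I would prove the concentration estimate~\eqref{lemma6.2} first and then deduce~\eqref{RxRz} from it by the Cauchy--Schwartz split used in Proposition~\ref{estimateforS}. The statement~\eqref{lemma6.2} is the large-$d$ analogue of Lemma~\ref{concentration}; the essential new ingredient is to replace the use of Lemma~\ref{ndtf} (whose constant $\gamma(\varrho)$ degenerates as $|\theta| \leq d^{-1/2} \to 0$) by sharp concentration available only when the summands are uniformly small.

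For~\eqref{lemma6.2}, I would exploit the representation $\pi_1 Z_1/(\pi_1 Z_1 + \pi_2 Z_2) = \psi\bigl(\sum_j U_j, \sum_j V_j\bigr)$ introduced at the start of Section~5.1. Whenever $x_n$ exceeds a threshold $\delta_0 = \delta_0(\pi, \varepsilon, \alpha) > 0$ the bound is automatic by taking $C \geq \delta_0^{-\alpha}$, so the task reduces to $x_n \leq \delta_0$ with $d$ large. There I would apply Bernstein's inequality to the independent centred sums $T_U = \sum_j (U_j - \mathbf{E}U_j)$ and $T_V = \sum_j (V_j - \mathbf{E}V_j)$: Lemma~\ref{meansandvariances} supplies $\mathbf{Var}(T_U), \mathbf{Var}(T_V) = O(x_n)$, while $|\theta| \leq d^{-1/2}$ forces $|U_j|, |V_j| = O(d^{-1/2})$. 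The Lipschitz continuity of $\psi$ near $(0,0)$ together with the deterministic estimate $\psi(\mathbf{E}\sum_j U_j, \mathbf{E}\sum_j V_j) = \pi_1 + O(x_n)$ reduces the event $\{|\psi - \pi_1| > \varepsilon\}$ to $\{|T_U| > c_\pi \varepsilon\} \cup \{|T_V| > c_\pi \varepsilon\}$, and Bernstein controls each of these by a bound of the form $2\exp\bigl(-c'_\varepsilon/(x_n + C_\pi d^{-1/2})\bigr)$, super-polynomially small in $x_n$ for $d$ large.

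For~\eqref{RxRz}, I would follow Proposition~\ref{estimateforS} verbatim: split at the threshold $\eta$ on $\bigl|\tfrac{\pi_1 Z_1}{\pi_1 Z_1 + \pi_2 Z_2} - \pi_1\bigr|$, bound the small-deviation part by $\eta\,\mathbf{E}(\pi_1 Z_1 + \pi_2 Z_2 - 1)^2 \leq C_1(\pi)\,\eta\, x_n^2$ via~\eqref{square}, and the large-deviation part by Cauchy--Schwartz using Lemma~\ref{Taylor} (fourth moment bounded by a $\pi$-constant) together with part~(i) at $\alpha = 6$ (tail of order $x_n^6$), which gives a contribution of order $x_n^3$. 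Choosing $\eta = \varepsilon/(2C_1)$ and $\delta = \delta(\pi, \varepsilon)$ small enough yields $|S| \leq \varepsilon\, x_n^2$.

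The main obstacle is ensuring that the constant in~\eqref{lemma6.2} is genuinely independent of $d$. Bernstein alone leaves a sub-exponential floor of order $\exp(-c\varepsilon\sqrt{d})$ that must be shown to lie below $Cx_n^\alpha$ even for the exponentially small values of $x_n$ that can arise as $n$ grows in the non-reconstruction regime. I expect this is handled either by enlarging $D = D(\pi, \varepsilon, \alpha)$ so that the floor is below every relevant $x_n^\alpha$ in the range of interest, or by supplementing Bernstein with a Rosenthal-type higher-moment bound on $T_U, T_V$ whose remainder $O(d^{1-k})$ can be absorbed into $x_n^\alpha$ after taking $k = k(\alpha)$ large enough.
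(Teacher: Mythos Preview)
Your overall architecture is right, and your argument for~\eqref{RxRz} via the Cauchy--Schwartz split is exactly what the paper does. The gap is in part~(i): Bernstein's inequality is not sharp enough to give a bound of the form $Cx_n^\alpha$ with $C$ independent of both $d$ and $n$, and neither of your two proposed fixes closes it. You correctly compute that Bernstein yields a tail of order $\exp\bigl(-c_\varepsilon/(x_n + C_\pi d^{-1/2})\bigr)$; as $x_n\to 0$ this saturates at the constant $\exp(-c_\varepsilon' \sqrt{d})$, and since the lemma must hold for \emph{all} $n$ (in the application it is used precisely in the regime where $x_n$ is driven to zero), no enlargement of $D$ can push this floor below $Cx_n^\alpha$ uniformly. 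The Rosenthal route has the same defect: the moment bound $\mathbf{E}|T_U|^{2k}\le C_k\bigl(x_n^k + d^{1-k}x_n\bigr)$ leaves a term $d^{1-k}x_n$ which, after Markov, gives a contribution of order $d^{1-k}x_n$ that cannot be dominated by $x_n^\alpha$ for $\alpha>1$ once $x_n$ is small.

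The paper's remedy is to use \emph{Bennett's inequality} instead. The point is that in the small-variance regime $\sigma^2=O(x_n)$ with a fixed a.s.\ bound $a$, Bennett produces a Poisson-type tail
\[
\mathbf{P}\Bigl(\sum_j(w_j-\mathbf{E}w_j)>t\Bigr)
\le \exp\Bigl(-\tfrac{t}{a}\bigl(\log\tfrac{at}{\sigma^2}-1\bigr)\Bigr)
= (\text{const})\cdot x_n^{\,t/a},
\]
i.e.\ a genuine power of $x_n$ rather than a floor. Concretely, the paper replaces $U_j$ by the quadratic minorant $w_j=\theta(Y_j-\pi_1)/\pi_1-\bigl(\theta(Y_j-\pi_1)/\pi_1\bigr)^2$, so that $U_j\ge w_j$ and $|w_j-\mathbf{E}w_j|\le 2M$ for $d$ large, and then \emph{chooses} $M=-\log(1-\varepsilon)/(4\alpha)$ (a fixed constant, deliberately not the sharper $O(d^{-1/2})$) so that the exponent $t/a$ in the Bennett tail equals exactly $\alpha$. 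This yields $\mathbf{P}(Z_1\le 1-\varepsilon)\le C_3 x_n^\alpha$ with $C_3=C_3(\pi,\varepsilon,\alpha)$, and the analogous bounds on $\{Z_1\ge 1+\varepsilon\}$ and $\{|Z_2-1|>\varepsilon\}$ follow in the same way; a continuity argument then converts $|Z_i-1|\le\eta$ into the desired deviation bound on $\pi_1 Z_1/(\pi_1 Z_1+\pi_2 Z_2)$. Once you swap Bernstein for Bennett with this calibrated $M$, the rest of your plan goes through unchanged.
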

\begin{proofsect}{Proof}
	For any $1\leq j\leq d$, define
	\begin{equation*}
	w_j=\theta\frac{Y_j-\pi_1}{\pi_1}-\left(\theta\frac{Y_j-\pi_1}{\pi_1}\right)^2\quad\textup{and}\quad
	M=-\frac{\log(1-\varepsilon)}{4\alpha}.
	\end{equation*}
	From equation \eqref{log} and $|\theta|\leq
	d^{-\frac12}$, one can find a suitable $D=D(\pi, M)>0$ such that when $d\geq D$ we have $\theta<\pi_1$,
	$U_j\geq w_j$ and $|w_j-\mathbf{E}w_j|\leq 2M$. It
	is concluded from Lemma~\ref{covariance} that
	$|d\mathbf{E}w_j|\leq C_1x_n$ and
	\begin{equation}
	\label{Wj-EWj}
\sum_{j=1}^d\mathbf{E}(w_j-\mathbf{E}w_j)^2\leq2d\mathbf{E}\left(\theta\frac{Y_1-\pi_1}{\pi_1}\right)^2+2d\mathbf{E}\left(\theta\frac{Y_1-\pi_1}{\pi_1}\right)^4
\leq4d\mathbf{E}\left(\theta\frac{Y_1-\pi_1}{\pi_1}\right)^2
\leq C_2x_n,
	\end{equation}
	where $C_1$ and $C_2$ denote the constants depending only on $\pi$. In the following context, it is convenient to presume
	\begin{equation}
	\label{xnlog} x_n\leq -\frac{\log(1-\varepsilon)}{2C_1},
	\end{equation}
	for the reason that equation \eqref{lemma6.2} would be trivial otherwise. Therefore, it follows from equation \eqref{xnlog} and the Bennet's inequality that
	\begin{eqnarray*}
	\label{Z1} \mathbf{P}(Z_1\leq
	1-\varepsilon)&=&\mathbf{P}\left(\sum_{j=1}^d U_j\leq
	\log(1-\varepsilon)\right)
	\\
	&\leq&\mathbf{P}\left(\sum_{j=1}^dw_j\leq
	\log(1-\varepsilon)\right)
	\\
	&\leq&\mathbf{P}\left(-\sum_{j=1}^d(w_j-\mathbf{E}w_j)\geq
	-\log(1-\varepsilon)+d\mathbf{E}w_1\right)
	\\
	&\leq&\mathbf{P}\left(\sum_{j=1}^d\left[-(w_j-\mathbf{E}w_j)\right]\geq
	-\frac{\log(1-\varepsilon)}{2}\right)
	\\
	&\leq&\exp\left[\left(-\frac{C_2x_n}{4M^2}\right)\left(-\frac{M\log(1-\varepsilon)}{C_2x_n}\right)\left(\log\frac{-M\log(1-\varepsilon)}{C_2x_n}-1\right)\right] 
	\\
	&\leq&\exp\left[\left(\frac{\log(1-\varepsilon)}{4M}\right)\left(\log\frac{-M\log(1-\varepsilon)}{C_2}-1\right)\right]x_n^{-\frac{\log(1-\varepsilon)}{4M}}
	\\
	&\leq&C_3x_n^\alpha,
	\end{eqnarray*}
	where $C_3$ depends only on $\pi$, $\varepsilon$, $\alpha$. Similarly one can show that $\mathbf{P}(Z_1\geq 1+ \varepsilon) <
	C_4x_n^\alpha$ and then
	\begin{equation*}
	\label{Z2-1} \mathbf{P}(|Z_1-1|>\varepsilon)\leq (C_3+C_4)x_n^\alpha
	\end{equation*}
	for some $C_4=C_4(\pi, \varepsilon, \alpha)$. Similarly one can also show that $\mathbf{P}(|Z_2-1|>\varepsilon)\leq C_5x_n^\alpha$.
On the other hand, there exists $\eta=\eta(\pi, \varepsilon)>0$ such that if
	$|Z_i-1|\leq\eta$ for $i=1, 2$ then 
	$$
\left|\frac{\pi_1Z_1}{\pi_1Z_1+\pi_2Z_2}-\pi_1\right|\leq\varepsilon.
	$$
	Finally, we have
	\begin{equation*}
	\begin{aligned}
	P\left(\left|\frac{\pi_1Z_1}{\pi_1Z_1+\pi_2Z_2}-\pi_1\right|>\varepsilon\right)
	&\leq
	\mathbf{P}\left(\left\{|Z_1-1|>\eta\right\}\cup\left\{|Z_2-1|>\eta\right\}\right)
	\\
	&\leq\mathbf{P}(|Z_1-1|>\eta)+\mathbf{P}(|Z_2-1|>\eta)
	\\
	&\leq Cx_n^\alpha,
	\end{aligned}
	\end{equation*}
	where $C=C(\pi, \varepsilon, \alpha)$. Then we can
	achieve equation \eqref{RxRz} by modifying the proof of Proposition~\ref{estimateforS}.
\end{proofsect}

\begin{lemma}
	\label{7.7} When $1-6\pi_1\pi_2<0$, for any $0< s \leq
	\pi_2$ we have
	$$
	g(s) < s.
	$$
\end{lemma}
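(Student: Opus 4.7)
The plan is to convert $g(s)<s$ into a statement about the Gaussian average $F(t):=g(s)+\pi_1=\mathbf{E}\,\sigma(\eta_t)$, where $\eta_t\sim\mathbf{N}(m+t/2,t)$, $m=\log(\pi_1/\pi_2)$, $\sigma(x)=(1+e^{-x})^{-1}$ is the logistic sigmoid (so $\sigma(m)=\pi_1$), and $t=s/(\pi_1\pi_2^2)$. Since the law of $\eta_t$ is that of a Brownian motion with drift $1/2$ at time $t$, $\frac{d}{dt}\mathbf{E}f(\eta_t)=\tfrac12\mathbf{E}[f'(\eta_t)+f''(\eta_t)]$ for smooth $f$; combined with the sigmoid identity $\sigma'+\sigma''=2\sigma(1-\sigma)^2$, this yields $F'(t)=\mathbf{E}[\sigma(\eta_t)(1-\sigma(\eta_t))^2]$, so in particular $F'(0)=\pi_1\pi_2^2$. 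Hence the desired bound $g(s)<s$ on $(0,\pi_2]$ is exactly $F(t)<\pi_1+\pi_1\pi_2^2\,t=F(0)+F'(0)\,t$ on $(0,1/(\pi_1\pi_2)]$, which I propose to obtain from strict concavity of $F$ on that interval.

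Applying the heat-equation identity a second time to $h(x):=\sigma(x)(1-\sigma(x))^2$ and simplifying $h'+h''$ via $\sigma'=\sigma(1-\sigma)$ gives
\[
F''(t)=6\,\mathbf{E}\Bigl[\sigma(\eta_t)(1-\sigma(\eta_t))^2\bigl(\sigma(\eta_t)^2-\sigma(\eta_t)+\tfrac16\bigr)\Bigr].
\]
At $t=0$ this evaluates to $\pi_1\pi_2^2(1-6\pi_1\pi_2)<0$ under the hypothesis, matching the sign of the $s^2$ coefficient in Lemma \ref{power} and immediately giving $g(s)<s$ on a neighborhood $(0,\varepsilon]$ via the $O_\pi(s^4)$ remainder in that lemma. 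The key structural observation for the global step is that the quadratic $p\mapsto p^2-p+\tfrac16$ is negative precisely on the interval $I=((3-\sqrt3)/6,(3+\sqrt3)/6)$, and the hypothesis $\pi_1\pi_2>\tfrac16$ is equivalent to $\pi_1\in I$.

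To push $F''<0$ to all of $(0,1/(\pi_1\pi_2)]$, I would split the expectation into its piece on $\{\sigma(\eta_t)\in I\}$ (negative, concentrated near $\sigma=\pi_1$) and its piece on $\{\sigma(\eta_t)\in I^c\}$ (positive, coming from the Gaussian tails); the latter is controlled by $\tfrac{4}{27}\cdot\mathbf{P}(\sigma(\eta_t)\notin I)$ using $\sigma(1-\sigma)^2\le 4/27$, and the former is bounded below by restricting to a neighborhood of $\pi_1$ in $I$ where $|\sigma^2-\sigma+\tfrac16|$ stays bounded away from $0$. Since $t$ ranges over the compact interval $(0,1/(\pi_1\pi_2)]$ depending only on $\pi$, Gaussian concentration of $\eta_t$ around $m+t/2$ makes these bounds uniform. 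The main obstacle is exactly this sign-balancing: the integrand of $F''$ genuinely changes sign, so no pointwise estimate suffices and one must quantitatively compare the Gaussian mass inside and outside $I$ uniformly in $t$; the strict inequality $1-6\pi_1\pi_2<0$ leaves room to absorb constants, and if global concavity proves stubborn, a natural fallback is to show directly that $F'(t)<F'(0)$ on the range by the same split, which still yields $F(t)-F(0)<F'(0)\,t$ upon integration.
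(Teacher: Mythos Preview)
The paper states this lemma without proof; it is simply asserted and then invoked in the proof of Theorem~\ref{nonreconstruction}. So there is no ``paper's proof'' to compare against, and your proposal is effectively an attempt to fill a gap the paper leaves open.

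Your reformulation is correct and clean: with $t=s/(\pi_1\pi_2^2)$ and $\eta_t\sim\mathbf{N}(m+t/2,t)$, one has $g(s)+\pi_1=F(t)=\mathbf{E}\,\sigma(\eta_t)$; the generator identity gives $F'(t)=\mathbf{E}[\sigma(1-\sigma)^2]$ and your formula
\[
F''(t)=6\,\mathbf{E}\bigl[\sigma(\eta_t)(1-\sigma(\eta_t))^2\bigl(\sigma(\eta_t)^2-\sigma(\eta_t)+\tfrac16\bigr)\bigr],
\]
and the claim becomes $F(t)<F(0)+F'(0)\,t$ on $(0,1/(\pi_1\pi_2)]$. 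This is a good framework.

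The gap is in your sketched sign argument for $F''$. You propose to bound the positive contribution by $\tfrac{4}{27}\,\mathbf{P}(\sigma(\eta_t)\notin I)$ and dominate it by the negative contribution from a neighborhood of $\pi_1$, appealing to ``Gaussian concentration''. But on the upper end of the range $\eta_t$ is not concentrated at all. Take the symmetric case $\pi_1=\pi_2=\tfrac12$, so $m=0$ and $t$ ranges up to $1/(\pi_1\pi_2)=4$; at $t=4$ we have $\eta_4\sim\mathbf{N}(2,4)$, and since $\sigma^{-1}((3+\sqrt3)/6)\approx 1.32$ one finds $\mathbf{P}(\sigma(\eta_4)\notin I)\ge\mathbf{P}(\eta_4>1.32)\approx 0.63$. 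Your upper bound on the positive part is then of order $\tfrac{4}{27}\cdot 0.63\approx 0.09$, whereas the integrand on $I$ satisfies $|\phi(p)|\le\phi(\tfrac12)=1/96$, so the negative part is at most about $0.01$ even with full mass. The comparison goes the wrong way by an order of magnitude; ``compactness of $[0,1/(\pi_1\pi_2)]$'' does not help because the deficit is at a fixed interior point, not a limit.

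What your split discards is exactly what makes the expectation negative: the mass outside $I$ sits mostly near $\sigma=1$ (since $\eta_t$ has positive drift), and there the positive integrand $\phi(p)=p(1-p)^2(p^2-p+\tfrac16)$ is of order $(1-p)^2$, not $4/27$. Any successful argument must exploit this decay---for instance by writing $F'(t)=\tfrac{1}{\pi_1}\mathbf{E}\bigl[(P(1-P))^2\bigr]$ with $P=\sigma(\eta_t)$ under the unconditional law and analyzing the second moment of the posterior variance directly, or by a more careful pointwise estimate that weights the tail by $(1-\sigma)^2$. As written, your proposal identifies the right objects and the right obstacle but does not overcome it; the fallback to $F'(t)<F'(0)$ inherits the same difficulty.
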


\begin{proofsect}{\textbf{Proof of Theorem \ref{nonreconstruction}}}
	Similar to the proof of Theorem \ref{reconstruction}, we will
	analyze $R$ and $S$ in equation \eqref{explicit}
	respectively, under the condition that $1-6\pi_1\pi_2<0$. Taking
	$D_1=C_R^2(6\pi_1\pi_2^2)^2/(6\pi_1\pi_2-1)^2$, if
	$d>D_1$ which implies $|\theta|\leq d^{-1/2}\leq D_1^{-1/2}$, by equation \eqref{R} and the inequality that
	$\left|z_n/x_n-\pi_1\right|\leq1$, we obtain
	\begin{eqnarray}
	\label{7.1R}  
	C_R\frac{d(d-1)}{2}|\theta|^5\left|\frac{z_n}{x_n}-\pi_1\right|x_n^2
	\leq\frac{1}{6}\frac{6\pi_1\pi_2-1}{\pi_1\pi_2^2}\frac{d(d-1)}{2}\theta^4x_n^2.
	\end{eqnarray}
	
Moreover, according to Lemma~\ref{concentrationlarged}, there exist
	$D_2=D_2(\pi)>D_1$ and $\delta=\delta(\pi)>0$ independent of $d$, such that
	if $d>D_2$ and $x_n<\delta$ then an analogue of equation \eqref{6.2} holds as
	\begin{equation}
	\label{7.2S}
	|S|\leq\frac{1}{4}\frac{6\pi_1\pi_2-1}{\pi_1\pi_2^2}\frac{d(d-1)}{2}\theta^4x_n^2,
	\end{equation}
	and then by equation \eqref{7.1R} we have
	\begin{equation}
	\label{7.3}
	|R|\leq\frac{1}{4}\frac{6\pi_1\pi_2-1}{\pi_1\pi_2^2}\frac{d(d-1)}{2}\theta^4x_n^2.
	\end{equation}
	
Next we claim that there is a positive integer $m$ such that $x_m<\delta$.
	Define $\varepsilon=\varepsilon(\pi,
	\delta)=\varepsilon(\pi)=\frac{1}{2}\min_{s\geq \delta}(s-g(s))$.
	Since the function $s-g(s)$ is continuous and positive on $[\delta, \pi_2]$ by Lemma~\ref{7.7}, we have $\varepsilon>0$. Then by
	Lemma \ref{Gaussianapproximation}, there exists a $D=D(\pi,
	\varepsilon)=D(\pi)>D_2>0$, such that when $d>D$,
	$$
	|x_{n+1}-g(d\theta^2x_n)|<\varepsilon,
	$$
    thus if $x_n\geq\delta$ then
	$$
	x_{n+1} < g(d\theta^2x_n) +\varepsilon
	\leq g(x_n)+\varepsilon
	\leq x_n-2\varepsilon+\varepsilon
	\leq x_n-\varepsilon,
	$$
	where the second inequality follows from Lemma \ref{increasing} which claims that
	$g(s)$ is increasing on $[0, \pi_2]$. Therefore, there must exist an
	$m\in \mathbb{Z}^+$, such that $x_m<\delta$, as desired.

	When $d>D$, it can be shown by induction, equation \eqref{7.2S} and equation \eqref{7.3} that when $n\geq m$, 
	\begin{equation}
	\label{7.2} x_{n+1}\leq
	d\theta^2x_n-\frac{1}{2}\frac{(6\pi_1\pi_2-1)}{\pi_1\pi_2^2}\frac{d(d-1)}{2}\theta^4x_n^2
	\leq d\theta^2x_n
	\leq x_n< \delta.
	\end{equation}
	Therefore, the limit $L$ defined as $L=\lim_{n\to\infty}x_n\geq0$ does exist, since
	the sequence $\{x_n\}_{n\geq m}$ is bounded and decreasing. Thus,
	passing to the limit on both sides of equation \eqref{7.2} gives
	$$
	L\leq
	d\theta^2L-\frac{1}{2}\frac{(6\pi_1\pi_2-1)}{\pi_1\pi_2^2}\frac{d(d-1)}{2}\theta^4L^2,
	$$
	which implies $L=0$, hence
	non-reconstruction.

\end{proofsect}

\section*{Acknowledgement}
We give special thanks to the journal editor
and two anonymous reviewers who provided us with many constructive and helpful comments.
We also give special thanks to Sebastien Roch for his
inspiring discussions and reading of some proofs in the first version of this paper.  We truly appreciate the
warm encouragements on finishing this complete version and helpful discussions on this topic as well as future extensions, from colleagues at the 2017 \& 2018 Columbia-Princeton Probability Day, 2017 Northeast Probability Seminar, 2018 Frontier Probability Days, 2017 \& 2018 Finger Lakes Probability Seminar, and 2017 \& 2018 Seminar on Stochastic Processes. 
\bibliography{\jobname}

\begin{thebibliography}{23}
\providecommand{\natexlab}[1]{#1}
\providecommand{\url}[1]{\texttt{#1}}
\expandafter\ifx\csname urlstyle\endcsname\relax
  \providecommand{\doi}[1]{doi: #1}\else
  \providecommand{\doi}{doi: \begingroup \urlstyle{rm}\Url}\fi

\bibitem[Berger et~al.(2005)Berger, Kenyon, Mossel, and
  Peres]{berger2005glauber}
Noam Berger, Claire Kenyon, Elchanan Mossel, and Yuval Peres.
\newblock Glauber dynamics on trees and hyperbolic graphs.
\newblock \emph{Probability Theory and Related Fields}, 131\penalty0
  (3):\penalty0 311--340, 2005.

\bibitem[Bernussou and Abatut(1977)]{bernussou1977point}
Jacques Bernussou and Jean-Louis Abatut.
\newblock \emph{Point mapping stability}.
\newblock Pergamon, 1977.

\bibitem[Bhamidi et~al.(2010)Bhamidi, Rajagopal, and Roch]{bhamidi2010network}
Shankar Bhamidi, Ram Rajagopal, and S{\'e}bastien Roch.
\newblock Network delay inference from additive metrics.
\newblock \emph{Random Structures \& Algorithms}, 37\penalty0 (2):\penalty0
  176--203, 2010.

\bibitem[Bleher et~al.(1995)Bleher, Ruiz, and Zagrebnov]{bleher1995purity}
Pavel~M Bleher, Jean Ruiz, and Valentin~A Zagrebnov.
\newblock On the purity of the limiting {G}ibbs state for the {I}sing model on
  the bethe lattice.
\newblock \emph{Journal of Statistical Physics}, 79\penalty0 (1-2):\penalty0
  473--482, 1995.

\bibitem[Borgs et~al.(2006)Borgs, Chayes, Mossel, and Roch]{borgs2006kesten}
Christian Borgs, Jennifer Chayes, Elchanan Mossel, and S{\'e}bastien Roch.
\newblock The {K}esten-{S}tigum reconstruction bound is tight for roughly
  symmetric binary channels.
\newblock In \emph{Foundations of Computer Science, 2006. FOCS'06. 47th Annual
  IEEE Symposium on}, pages 518--530. IEEE, 2006.

\bibitem[Chayes et~al.(1986)Chayes, Chayes, Sethna, and
  Thouless]{chayes1986mean}
JT~Chayes, L~Chayes, James~P Sethna, and DJ~Thouless.
\newblock A mean field spin glass with short-range interactions.
\newblock \emph{Communications in Mathematical Physics}, 106\penalty0
  (1):\penalty0 41--89, 1986.

\bibitem[Daskalakis et~al.(2006)Daskalakis, Mossel, and
  Roch]{daskalakis2006optimal}
Constantinos Daskalakis, Elchanan Mossel, and S{\'e}bastien Roch.
\newblock Optimal phylogenetic reconstruction.
\newblock In \emph{Proceedings of the thirty-eighth annual ACM symposium on
  Theory of computing}, pages 159--168. ACM, 2006.

\bibitem[Evans et~al.(2000)Evans, Kenyon, Peres, and
  Schulman]{evans2000broadcasting}
William Evans, Claire Kenyon, Yuval Peres, and Leonard~J Schulman.
\newblock Broadcasting on trees and the {I}sing model.
\newblock \emph{Annals of Applied Probability}, pages 410--433, 2000.

\bibitem[Georgii(2011)]{georgii2011gibbs}
Hans-Otto Georgii.
\newblock \emph{Gibbs measures and phase transitions}, volume~9.
\newblock Walter de Gruyter, 2011.

\bibitem[Kesten and Stigum(1966)]{kesten1966additional}
Harry Kesten and Bernt~P Stigum.
\newblock Additional limit theorems for indecomposable multidimensional
  {G}alton-{W}atson processes.
\newblock \emph{The Annals of Mathematical Statistics}, 37\penalty0
  (6):\penalty0 1463--1481, 1966.

\bibitem[Kesten and Stigum(1967)]{kesten1967limit}
Harry Kesten and Bernt~P Stigum.
\newblock Limit theorems for decomposable multi-dimensional {G}alton-{W}atson
  processes.
\newblock \emph{Journal of Mathematical Analysis and Applications}, 17\penalty0
  (2):\penalty0 309--338, 1967.

\bibitem[Liu et~al.(2018)Liu, Jammalamadaka, and Ning]{liu2018tightness}
Wenjian Liu, Sreenivasa~Rao Jammalamadaka, and Ning Ning.
\newblock The tightness of the {K}esten--{S}tigum reconstruction bound of
  symmetric model with multiple mutations.
\newblock \emph{Journal of Statistical Physics}, 170\penalty0 (3):\penalty0
  617--641, 2018.

\bibitem[Martinelli et~al.(2007)Martinelli, Sinclair, and
  Weitz]{martinelli2007fast}
Fabio Martinelli, Alistair Sinclair, and Dror Weitz.
\newblock Fast mixing for independent sets, colorings, and other models on
  trees.
\newblock \emph{Random Structures \& Algorithms}, 31\penalty0 (2):\penalty0
  134--172, 2007.

\bibitem[M{\'e}zard and Montanari(2006)]{mezard2006reconstruction}
Marc M{\'e}zard and Andrea Montanari.
\newblock Reconstruction on trees and spin glass transition.
\newblock \emph{Journal of statistical physics}, 124\penalty0 (6):\penalty0
  1317--1350, 2006.

\bibitem[Mossel(2001)]{mossel2001reconstruction}
Elchanan Mossel.
\newblock Reconstruction on trees: beating the second eigenvalue.
\newblock \emph{Annals of Applied Probability}, pages 285--300, 2001.

\bibitem[Mossel(2004{\natexlab{a}})]{mossel2004phase}
Elchanan Mossel.
\newblock Phase transitions in phylogeny.
\newblock \emph{Transactions of the American Mathematical Society},
  356\penalty0 (6):\penalty0 2379--2404, 2004{\natexlab{a}}.

\bibitem[Mossel(2004{\natexlab{b}})]{mossel2004survey}
Elchanan Mossel.
\newblock Survey-information flow on trees.
\newblock \emph{DIMACS series in discrete mathematics and theoretical computer
  science}, 63:\penalty0 155--170, 2004{\natexlab{b}}.

\bibitem[Mossel et~al.(2012)Mossel, Neeman, and Sly]{mossel2012stochastic}
Elchanan Mossel, Joe Neeman, and Allan Sly.
\newblock Stochastic block models and reconstruction.
\newblock \emph{arXiv preprint arXiv:1202.1499}, 2012.

\bibitem[Mossel et~al.(2013)Mossel, Neeman, and Sly]{mossel2013proof}
Elchanan Mossel, Joe Neeman, and Allan Sly.
\newblock A proof of the block model threshold conjecture.
\newblock \emph{Combinatorica}, pages 1--44, 2013.

\bibitem[Neeman and Netrapalli(2014)]{neeman2014non}
Joe Neeman and Praneeth Netrapalli.
\newblock Non-reconstructability in the stochastic block model.
\newblock \emph{arXiv preprint arXiv:1404.6304}, 2014.

\bibitem[Roch(2006)]{roch2006short}
Sebastien Roch.
\newblock A short proof that phylogenetic tree reconstruction by maximum
  likelihood is hard.
\newblock \emph{IEEE/ACM Transactions on Computational Biology and
  Bioinformatics}, 3\penalty0 (1):\penalty0 92--94, 2006.

\bibitem[Sly(2011)]{sly2009reconstruction}
Allan Sly.
\newblock Reconstruction for the {P}otts model.
\newblock \emph{The Annals of Probability}, 39:\penalty0 1365--1406, 2011.

\bibitem[Tetali et~al.(2012)Tetali, Vera, Vigoda, and Yang]{tetali2012phase}
Prasad Tetali, Juan~C Vera, Eric Vigoda, and Linji Yang.
\newblock Phase transition for the mixing time of the {G}lauber dynamics for
  coloring regular trees.
\newblock \emph{Ann. Appl. Probab.}, 22:\penalty0 2210--2239., 2012.

\end{thebibliography}

\end{document}